\newtheorem{thm}{Theorem}
\newtheorem*{thm*}{Theorem}
\newtheorem*{claim*}{Claim}
\newtheorem{cor}[thm]{Corollary}
\newtheorem{proposition}[thm]{Proposition}
\theoremstyle{definition}
\theoremstyle{remark}
\newtheorem*{rem}{Remark}
\renewcommand{\int}{\mathop{\rm int}}
\renewcommand{\epsilon}{\varepsilon}
\newcommand*{\R}{\mathbb{R}}                                   
\newcommand*{\Z}{\mathbb{Z}}                                   
\begin{document}
\title{Extensions of theorems of Rattray and Makeev}
\author{Pavle~V.~M.~Blagojevi\'c$^{*}$}
\email{pavleb@mi.sanu.ac.rs}
\address{Pavle~V.~M.~Blagojevi\'c, Mathemati\v cki Institut SANU, Knez Michailova 36, 11001 Beograd,
Serbia}
\thanks{$^{*}$The research leading to these results has received funding from the European Research
Council under the European Union's Seventh Framework Programme (FP7/2007-2013) /
ERC Grant agreement no.~247029-SDModels. Also supported by the grant ON 174008 of the Serbian
Ministry of Education and Science.}
\author{Roman~Karasev$^{**}$}
\thanks{$^{**}$The research of R.N.~Karasev is supported by the Dynasty Foundation, the President's of Russian Federation grant MD-352.2012.1, the Russian Foundation for Basic Research grants 10-01-00096 and 10-01-00139, the Federal Program ``Scientific and scientific-pedagogical staff of innovative Russia'' 2009--2013, and the Russian government project 11.G34.31.0053.}
\email{r\_n\_karasev@mail.ru}
\address{Roman~Karasev, Departement of Mathematics, Moscow Institute of Physics and Technology, Institutskiy per. 9, Dolgoprudny, Russia 141700}
\address{Roman~Karasev, Laboratory of Discrete and Computational Geometry, Yaroslavl' State University, Sovetskaya st. 14, Yaroslavl', Russia 150000}
\keywords{Rattray's theorem, measure partition, Borsuk-Ulam type theorems}
\subjclass[2000]{55M20, 05D10, 20J06, 46B20, 52A21, 55M35}

\begin{abstract}
We consider extensions of the Rattray theorem and two Makeev's theorems, showing that they hold for several maps, measures, or functions simultaneously, 
when we consider orthonormal $k$-frames in $\R^n$ instead of orthonormal basis (full frames).

We also present new results on simultaneous partition of several measures into parts by $k$ mutually orthogonal hyperplanes.

In the case $k=2$ we relate the Rattray and Makeev type results with the well known embedding problem for projective spaces. 
\end{abstract}

\maketitle

\section{Introduction}

\noindent In this paper we consider extensions of the following results of
Rattray and Makeev:

\begin{compactitem}
\item  any odd continuous map $S^{n-1}\rightarrow S^{n-1}$ maps some orthonormal basis to an orthonormal
basis, the Rattray theorem \cite{ratt1954};

\item for any absolutely continuous probabilistic measure $\mu $ in $\R^{n}$ there exist $n$
mutually orthogonal hyperplanes $H_{1},\ldots ,H_{n}$ such that any two of
them partition $\mu $ into $4$ equal parts, the Makeev theorem \cite[Theorem~4]{mak2007-1}.

\end{compactitem}

\noindent These results share a common family of possible solutions, the manifold of all orthonormal basis $\mathrm{O}(n)$ in $\mathbb{R}^{n}$. Moreover, they can be seen as a consequence of a single result, Theorem~\ref{rattray}, proved implicitly already in~\cite{ratt1954}.

\medskip

A continuous function $f:S^{n-1}\times S^{n-1}\rightarrow \mathbb{R}$ will
be called

\begin{compactenum}[\rm(a)]
\item \emph{odd}, if for any $x,y\in S^{n-1}$
\begin{equation*}
f(-x,y)=-f(x,y),\ f(x,-y)=-f(x,y);
\end{equation*}

\item \emph{symmetric}, if for any $x,y\in S^{n-1}$
\begin{equation*}
f(x,y)=f(y,x).
\end{equation*}
\end{compactenum}

\begin{thm}
\label{rattray} Suppose $f:S^{n-1}\times S^{n-1}\rightarrow \mathbb{R}$ is
an odd and symmetric function. Then there exists an orthonormal basis $%
(e_{1},\ldots ,e_{n})\in \mathrm{O}(n)$ such that for any $i<j$
\begin{equation*}
f(e_{i},e_{j})=0.
\end{equation*}
\end{thm}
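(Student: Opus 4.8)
The plan is to recast the conclusion as the non-vanishing of a section of a vector bundle over a closed manifold, and then to pin down the relevant mod~$2$ Euler number by evaluating it on one explicit test function.

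First I would introduce the test map
\[
F\colon \mathrm{O}(n)\longrightarrow \R^{\binom{n}{2}},\qquad
F(e_1,\dots,e_n)=\bigl(f(e_i,e_j)\bigr)_{1\le i<j\le n},
\]
and use its symmetries. The hyperoctahedral group $W\cong(\Z/2)^n\rtimes S_n$ of signed permutation matrices acts freely on $\mathrm{O}(n)$ by right multiplication (it relabels the vectors of a frame and changes their signs), and, because $f$ is odd and symmetric, $F$ is equivariant for the $W$-action on $\R^{\binom{n}{2}}$ in which $(\Z/2)^n$ rescales the coordinate indexed by $\{i,j\}$ by $\varepsilon_i\varepsilon_j$ and $S_n$ permutes the coordinates via its action on unordered pairs. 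Hence $F$ descends to a section $s_f$ of the rank-$\binom{n}{2}$ real vector bundle $\xi=\mathrm{O}(n)\times_W\R^{\binom{n}{2}}$ over the closed $\binom{n}{2}$-dimensional manifold $\mathcal{N}=\mathrm{O}(n)/W$, and a zero of $s_f$ is exactly a $W$-orbit of orthonormal frames $(e_1,\dots,e_n)$ with $f(e_i,e_j)=0$ for all $i<j$. So the theorem follows once I exhibit a zero of $s_f$; since a nowhere-zero section of $\xi$ would make $w_{\binom{n}{2}}(\xi)$ vanish, it suffices to prove $\bigl\langle w_{\binom{n}{2}}(\xi),[\mathcal{N}]\bigr\rangle=1$ in $\Z/2$. (Equivalently: there is no $W$-equivariant map $\mathrm{O}(n)\to S^{\binom{n}{2}-1}$.)

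This mod~$2$ Euler number depends only on $\xi$ and $\mathcal{N}$ and equals the number of zeros, counted modulo~$2$, of any section transverse to the zero section; so I would compute it with the bilinear model $f_0(x,y)=\sum_{i=1}^n c_ix_iy_i$, where $c_1,\dots,c_n$ are pairwise distinct reals ($f_0$ is odd, being bilinear, and symmetric). For $f_0$ the equation $F=0$ says exactly that the orthonormal basis $(e_1,\dots,e_n)$ diagonalizes $C=\operatorname{diag}(c_1,\dots,c_n)$; as the $c_i$ are distinct, the orthonormal eigenbases of $C$ form a single $W$-orbit, so $s_{f_0}$ has exactly one zero $z_0\in\mathcal{N}$. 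Lifting $z_0$ to the standard frame $I\in\mathrm{O}(n)$ and differentiating along a skew-symmetric $X\in T_I\mathrm{O}(n)$, with $e_i(t)$ the $i$-th column of $e^{tX}$, one gets $f_0(e_i(t),e_j(t))=t\,(c_i-c_j)X_{ij}+O(t^2)$, so the vertical derivative of $s_{f_0}$ at $z_0$ is the linear map $X\mapsto\bigl((c_i-c_j)X_{ij}\bigr)_{i<j}$, which is an isomorphism because the $c_i$ are distinct. Thus $z_0$ is a transverse zero, the Euler number above equals $1$, and consequently $s_f$ has a zero for every odd and symmetric $f$ — which is the assertion of the theorem.

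The part needing the most care is the bookkeeping that converts ``number of common zeros of the $f(e_i,e_j)$'' into a characteristic class: one should check that $\mathcal{N}$ is indeed a closed smooth manifold (it is, being the free quotient of the compact manifold $\mathrm{O}(n)$ by the finite group $W$), that $\langle w_{\binom{n}{2}}(\xi),[\mathcal{N}]\rangle$ is well defined over $\Z/2$ and is computed by transverse zeros, and that the model section $s_{f_0}$ is genuinely transverse (the linear-algebra step above). It is also instructive to see why the symmetry hypothesis on $f$ is essential for this argument: quotienting only by $(\Z/2)^n$ replaces $\mathcal{N}$ by the flag manifold $\mathrm{O}(n)/(\Z/2)^n$ and $\xi$ by $\bigoplus_{i<j}\lambda_i\otimes\lambda_j$, whose top Stiefel--Whitney class $\prod_{i<j}(x_i+x_j)$ vanishes there (correspondingly $s_{f_0}$ then has $n!\equiv0$ zeros); it is precisely the $S_n$-symmetry, used to descend to $\mathrm{O}(n)/W$, that makes the Euler number odd.
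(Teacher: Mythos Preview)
Your proof is correct and follows essentially the same route as the paper: both reduce to the generic diagonal bilinear form $f_0(x,y)=\sum c_ix_iy_i$, observe that its zero set in $\mathrm{O}(n)$ is a single $W$-orbit with nondegenerate differential, and use this to show that a mod~$2$ topological invariant (your $\langle w_{\binom{n}{2}}(\xi),[\mathcal N]\rangle$, the paper's class in $H_0(\mathrm{O}(n)/W;\mathbb F_2)$) is nonzero, forcing a zero of $s_f$ for every $f$. Your bundle-theoretic packaging is a bit more explicit than the paper's deformation/compactness phrasing, but the content is the same.
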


\begin{proof}
Consider a particular case when $f(x,y)$ is a generic symmetric bilinear form. It follows from the diagonalization theorem in linear algebra that the required orthonormal basis $e_{1},\ldots ,e_{n}$ exists and is unique modulo the action of the group $W_{n}=(\mathbb{Z}_{2})^{n}\rtimes \Sigma_{n}\subset \mathrm{O}(n)$. Here the group $W_{n}$ acts on basis $\left(e_{1},\ldots ,e_{n}\right) \in \mathrm{O}(n)$ by
\begin{equation*}
\varepsilon _{i}\cdot \left( e_{1},\ldots ,e_{n}\right) =\left(
e_{1}^{\prime },\ldots ,e_{n}^{\prime }\right) \text{ where }e_{j}^{\prime
}=\left\{
\begin{array}{rrr}
-e_{j} &  & \text{, for }j=i \\
e_{j} &  & \text{, for }j\neq i%
\end{array}%
\right.
\end{equation*}%
for the generators $\varepsilon _{1},\ldots ,\varepsilon _{n}$ of the component $%
(\mathbb{Z}_{2})^{n}$ and by%
\begin{equation*}
\pi \cdot \left( e_{1},\ldots ,e_{n}\right) =\left( e_{\pi (1)},\ldots
,e_{\pi (n)}\right)
\end{equation*}%
for the permutation $\pi \in \Sigma _{n}$ from the symmetric group component
of the group $W_{n}$.

\noindent Let us show that:

\begin{compactitem}
\item the differential of the corresponding system of equations evaluated at
the solution $e_{1},\ldots ,e_{n}$ is nonzero, and

\item the solution set represents a nonzero element of the $0$-homology $H_{0}(%
\mathrm{O}(n)/W_{n};\mathbb{F}_{2})$.
\end{compactitem}

\noindent Suppose the base vector $e_{i}$ has coordinates $b_{ij}$, and
\begin{equation*}
f(x,y)=\sum_{i}\lambda _{i}x_{i}y_{i}
\end{equation*}%
in the coordinate representation. Since $f$ is a generic symmetric bilinear form we can assume that $\lambda _{1},\ldots ,\lambda _{n}$ are distinct real numbers. The solution is $b_{ij}=\delta _{ij}$, and its first order deformation is $b_{ij}=\delta _{ij}+s_{ij}$, where $s_{ij}$ is a skew symmetric $n\times n$ matrix. Consider
\begin{equation*}
f(e_{k},e_{l})=\sum_{i}\lambda _{i}b_{ik}b_{il}.
\end{equation*}%
The linear part, with respect to $s_{ij}$, is
\begin{equation*}
df(e_{k},e_{l})=\sum_{i}\lambda _{i}\delta _{ik}s_{il}+\sum_{i}\lambda
_{i}s_{ik}\delta _{il}=\lambda _{k}s_{kl}+\lambda _{l}s_{lk}=(\lambda
_{k}-\lambda _{l})s_{kl}.
\end{equation*}%
Since all values $\lambda _{k}-\lambda _{l}$ are nonzero, that the differentials $df(e_{k},e_{l})$ give together a bijective map from the space of skew symmetric matrices to the space of all symmetric expressions of the form $t_{kl}$ for $k\neq l$. 

Since any $f$ can be $W_n$-deformed (by a convex combination) to this
particular case, it follows that for generic $f$ the solution set represents the generator of $H_{0}(\mathrm{O}(n)/W_{n};\mathbb{F}_{2})$ (and is nonempty). Therefore, the solution set must be nonempty for all other $f$ by compactness considerations.
\end{proof}

\medskip

\noindent In this paper we consider the following generalized problems of
Rattray and Makeev type.

\subsection*{Generalized Rattray problem}

\noindent Determine the set $\mathcal{R}_{odd}^{orth}\subset \mathbb{N}^{3}$
[$\mathcal{R}_{odd,sym}^{orth}\subset \mathbb{N}^{3}$] of all triples $%
(n,m,k)$ with the property that for any collection $f_{1},\ldots ,f_{m}$ of $%
m$ odd [and symmetric] functions $S^{n-1}\times S^{n-1}\rightarrow \mathbb{R}
$ there exists an orthonormal $k$-frame $\left( e_{1},\ldots ,e_{k}\right)
\in V_{n}^{k}$ such that for any $1\leq l\leq m$ and $1\leq i<j\leq k$
\begin{equation*}
f_{l}(e_{i},e_{j})=0.
\end{equation*}%
Here $V_{n}^{k}$ stands for the Stiefel manifold of all orthonormal $k$%
-frames in $\mathbb{R}^{n}$.

\noindent This problem has a natural variation when the requirement for the
vectors $e_{1},\ldots ,e_{k}$ to be orthonormal is dropped. Determine the
set $\mathcal{R}_{odd}\subset \mathbb{N}^{3}$ [$\mathcal{R}_{odd,sym}\subset
\mathbb{N}^{3}$] off all triples $(n,m,k)$ with the property that for any
collection $f_{1},\ldots ,f_{m}$ of $m$ odd [and symmetric] functions $%
S^{n-1}\times S^{n-1}\rightarrow \mathbb{R}$ there exist $k$ unit vectors $%
e_{1},\ldots ,e_{k}$ such that for any $1\leq l\leq m$ and $1\leq i<j\leq k$
\begin{equation*}
f_{l}(e_{i},e_{j})=0.
\end{equation*}%
An elementary observation is that $\mathcal{R}_{odd}^{orth}\subset \mathcal{R%
}_{odd}$ [$\mathcal{R}_{odd,sym}^{orth}\subset \mathcal{R}_{odd,sym}$] and
\begin{equation*}
\begin{array}{ccc}
(n,m,k)\in \mathcal{R}_{odd}~\Rightarrow ~(n,m-1,k)\in \mathcal{R}%
_{odd}^{orth} &  & \left[ (n,m,k)\in \mathcal{R}_{odd,sym}~\Rightarrow
~(n,m-1,k)\in \mathcal{R}_{odd,sym}^{orth}\right]%
\end{array}%
\end{equation*}%
by putting inner product on $\mathbb{R}^{n}$ for $f_{m}$.

\subsection*{Generalized Makeev problem}

Let $H=\{x\in \R^{n}~|~\langle x,v\rangle =\alpha \}$ be an affine hyperplane in $\R^{n}$. 
Here $v$ is a vector in $\R^{n}$ and $\alpha \in\R$ some constant. 
The affine hyperplane $H$ determines two open halfspaces%
\begin{equation*}
H^{-}=\{x\in\R^{n}~|~\langle x,v\rangle <\alpha \}\text{ and }H^{+}=\{x\in\R^{n}~|~\langle x,v\rangle >\alpha \}\text{.}
\end{equation*}%
Let $\mathcal{H}=\{H_{1},H_{2},\ldots ,H_{k}\}$ be an arrangement of affine hyperplanes in $\mathbb{R}^{d}$.
An \textit{orthant} of the arrangement $\mathcal{H}$ is an intersection of halfspaces $\mathcal{O}=H_{1}^{\alpha_{1}}\cap\cdots\cap H_{k}^{\alpha _{k}}$, for some $\alpha _{j}\in \Z_2$. 
For convenience we assume that $\Z_2=\left( \{+1,-1\},\cdot\right)$ with obvious abbreviation $H^{+1}\equiv H^{+}$ and $H^{-1}\equiv H^{-}$. 
There are $2^{k}$ orthants determined by $\mathcal{H}$. 
The orthants are not necessary non-empty.  
They can be indexed by elements of the group $\left(\Z_2\right) ^{k}$ in a natural way.

\noindent Let $\mu $ be an absolutely continuous probabilistic measure on $\R^{n}$. 
The arrangement $\mathcal{H}$ \textit{equiparts} the measure $\mu $ if for each orthant $\mathcal{O}$ determined by the arrangement $\mu (\mathcal{O})=%
\tfrac{1}{2^{k}}\mu (\R^{n})$.

\medskip

Generalized Makeev problem is to determine the set $\mathcal{M}\subset\mathbb{N}^{4}$ [$\mathcal{M}^{orth}\subset \mathbb{N}^{4}$] of all quadruples $(n,m,k,l)$, where $1\leq l\leq k$, with the property that for every collection of $m$ absolutely continuous probabilistic measures $\mu _{1},\ldots ,\mu_{m}$ on $\mathbb{R}^{n}$ there exist $k$ [mutually orthogonal] hyperplanes $H_{1},\ldots ,H_{k}$ such that any $l$ of them equipart all the measures.

\noindent It is obvious that $\mathcal{M}^{orth}\subset \mathcal{M}$.
Moreover, by taking $\mu _{m}$ to be the uniform probability measure on the
unit ball in $\mathbb{R}^{n}$ we can derive that%
\begin{equation*}
(n,m,k,l)\in \mathcal{M}~\Rightarrow ~(n,m-1,k,l)\in \mathcal{M}^{orth}.
\end{equation*}%
The generalized Makeev problem for $l=k$ is known as a generalized Gr\"{u}%
nbaum mass partition problem as introduced by Gr\"{u}nbaum in \cite[4.
Remarks (v)]{Grunb} and further studied by Ramos in \cite{ramos1996} and
Mani-Levitska, S.~Vre\'{c}ica, R.~\v{Z}ivaljevi\'{c} in \cite{zvm2006}.

\section{Statement of main results}

Let $A=\mathbb{F}_{2}[t_{1},\ldots ,t_{k}]$ denote the polynomial algebra
with variables $t_{1},\ldots ,t_{k}$ of degree $1$. Then

\begin{equation*}
w_{1}=t_{1}+\dots +t_{k},\ldots ,w_{k}=t_{1}t_{2}\dots t_{k}
\end{equation*}%
are elementary symmetric polynomials in $A$ with the respect to permutation
of variables. Set for $l\geq 1$,
\begin{equation*}
\bar{w}_{l}=\sum_{\substack{ i_{1},i_{2},\ldots ,i_{k}\geq 0  \\ %
i_{1}+2i_{2}+\dots +ki_{k}=l}}\binom{i_{1}+\dots +i_{k}}{i_{1}\ i_{2}\
\ldots \ i_{k}}~w_{1}^{i_{1}}\dots w_{k}^{i_{k}},
\end{equation*}%
where $\binom{i_{1}+\cdots +i_{k}}{i_{1}\ i_{2}\ \ldots \ i_{k}}$ stands for $%
\frac{\left( i_{1}+\cdots +i_{k}\right) !}{\left( i_{1}\right) !~\ldots ~\left(
i_{k}\right) !}$ modulo $2$.

\medskip

\subsection{Rattray type results}

These results give sufficient conditions for a triple $(n,m,k)$ to be in $\mathcal{R}_{\ast }^{\ast }$ and can be formulated in the following way.

\begin{thm}
\label{ram-rattray-st} Let $(n,m,k)\in \mathbb{N}^{3}$. Then
\begin{compactenum}[\rm(a)]
\item $\prod_{1\leq i<j\leq k}(t_{i}+t_{j})^{2m}\notin \langle
t_{1}^{n},\ldots ,t_{k}^{n}\rangle ~\Longrightarrow ~(n,m,k)\in \mathcal{R}%
_{odd},$

\item $\prod_{1\leq i<j\leq k}(t_{i}+t_{j})^{m}\notin \langle t_{1}^{n},\ldots
,t_{k}^{n}\rangle ~\Longrightarrow ~(n,m,k)\in \mathcal{R}_{odd,sym},$

\item $\prod_{1\leq i<j\leq k}(t_{i}+t_{j})^{2m}\notin \langle \bar{w}%
_{n-k+1},\ldots ,\bar{w}_{n}\rangle $~$\Longrightarrow ~(n,m,k)\in \mathcal{R%
}_{odd}^{orth}$,

\item $\prod_{1\leq i<j\leq k}(t_{i}+t_{j})^{m}\notin \langle \bar{w}%
_{n-k+1},\ldots ,\bar{w}_{n}\rangle $~$\Longrightarrow ~(n,m,k)\in \mathcal{R%
}_{odd,sym}^{orth}$.
\end{compactenum}
\end{thm}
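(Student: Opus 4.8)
The plan is to set up a configuration-space/test-map scheme, parametrized by the relevant configuration space of $k$-frames (or $k$-tuples of unit vectors), with the symmetry group $W_k = (\mathbb{Z}_2)^k \rtimes \Sigma_k$ acting, and to reduce nonexistence of a solution to the vanishing of a certain Euler class — the $W_k$-equivariant Euler class of the test bundle — which after passing to $\mathbb{F}_2$-cohomology becomes exactly the product $\prod_{i<j}(t_i+t_j)^{\epsilon m}$ living in a quotient ring that one recognizes as $H^*$ of the configuration space. Concretely, for part~(a) the configuration space is $(S^{n-1})^{\times k}$ with the $(\mathbb{Z}_2)^k \rtimes \Sigma_k$-action; the test map sends $(e_1,\ldots,e_k)$ to the vector $\bigl(f_l(e_i,e_j)\bigr)_{1\le l\le m,\ 1\le i<j\le k}$, which lives in a real representation of $W_k$. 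For a pair $\{i,j\}$ the two sign-flips $\epsilon_i$ and $\epsilon_j$ each negate $f_l(e_i,e_j)$ (oddness), and transpositions permute the pairs; so the target is a sum, over unordered pairs $\{i,j\}$, of $m$ copies of the sign representation on which $\epsilon_i,\epsilon_j$ act by $-1$ and the rest trivially. Its mod-$2$ Euler class is $\prod_{i<j}(t_i+t_j)^{m}$; but since each $f_l$ is only odd (not symmetric), one must in fact work with ordered pairs, or equivalently with the $2m$-fold version — this is where the exponent $2m$ versus $m$ in (a) versus (b) comes from. In the symmetric case (b), $f_l(e_i,e_j)=f_l(e_j,e_i)$ cuts the relevant bundle in half, giving exponent $m$.

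The concrete steps I would carry out are: (i) identify $H^*_{W_k}(\mathrm{pt};\mathbb{F}_2)$ with $A^{\Sigma_k}$-module structure and recall that $H^*_{(\mathbb{Z}_2)^k}((S^{n-1})^{\times k};\mathbb{F}_2) \cong \mathbb{F}_2[t_1,\ldots,t_k]/\langle t_1^n,\ldots,t_k^n\rangle$, with $t_i$ the Stiefel–Whitney class of the $i$-th tautological line bundle; (ii) similarly, for the orthonormal case, recall that $H^*(V_n^k;\mathbb{F}_2)$, or rather the Borel construction $EW_k \times_{W_k} V_n^k$, has cohomology computed by the ideal $\langle \bar w_{n-k+1},\ldots,\bar w_n\rangle$ — this is the classical computation (Borel, or as used by Hsiang) identifying the $W_k$-equivariant cohomology of the Stiefel manifold, the polynomials $\bar w_l$ being the dual Stiefel–Whitney-type classes forced by the relation with the flag/frame bundle; (iii) describe the test bundle over the Borel construction explicitly as a Whitney sum of line bundles pulled back via the $f_l$'s, compute its top Stiefel–Whitney (= mod-$2$ Euler) class as $\prod_{i<j}(t_i+t_j)^{\epsilon m}$ with $\epsilon\in\{1,2\}$ according to the symmetric/non-symmetric dichotomy; (iv) invoke the standard principle: if this Euler class is nonzero in the appropriate quotient ring, then the zero section cannot be avoided equivariantly, hence the test map has a zero, hence a frame with the required orthogonality relations exists. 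The hypothesis in each of (a)–(d) is precisely the statement that this Euler class is nonzero.

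The main obstacle — and the step deserving the most care — is step (ii), the cohomology of the Borel construction of the Stiefel manifold $V_n^k$ with respect to $W_k$, and the verification that the relevant ideal is exactly $\langle \bar w_{n-k+1},\ldots,\bar w_n\rangle$. One has the fibration $V_n^k \to \mathrm{pt}$ upgraded to $EW_k\times_{W_k}V_n^k \to BW_k$, and one must run its Serre spectral sequence (or use the Leray–Hirsch-type argument via the associated frame bundle over $BW_k$): the classes $\bar w_l$ arise as the components of the total class inverse to $\prod_i(1+t_i)\cdots$ truncated appropriately, reflecting that $V_n^k$ is the sphere-bundle tower whose Euler classes are the $w$'s, so that in the limiting page the surviving relations are the "dual" classes $\bar w_{n-k+1},\ldots,\bar w_n$. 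I would present this carefully, perhaps as a separate lemma, since everything else is formal once the target ring and the Euler class are pinned down; the deformation-to-generic-case idea already used in the proof of Theorem~\ref{rattray} plus a compactness argument then closes the gap between "Euler class nonzero" and "solution exists for \emph{all} admissible $f_l$", not merely generic ones.
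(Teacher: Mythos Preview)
Your proposal is correct and follows essentially the same route as the paper: the same configuration spaces $X=(S^{n-1})^k$ and $Y=V_n^k$ with the $W_k$-action, the same test maps into the off-diagonal matrix representations (the paper calls them $U_k$ and $R_k$, with $U_k\cong R_k\oplus R_k$ as $(\mathbb Z_2)^k$-modules explaining the $2m$ versus $m$), and the same Fadell--Husseini index computations restricted to the subgroup $(\mathbb Z_2)^k$, including the identification of $\mathrm{Index}_{(\mathbb Z_2)^k,\mathbb F_2}V_n^k=\langle \bar w_{n-k+1},\ldots,\bar w_n\rangle$ via Proposition~\ref{Prop:IndexOfStiefel}.

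One small point: your final remark about needing the deformation-to-generic-case plus compactness to pass from ``Euler class nonzero'' to ``solution exists for \emph{all} $f_l$'' is superfluous and slightly confused. Once the Euler class (equivalently, the generator of the Fadell--Husseini index of the target sphere) is known not to lie in the index of the source, \emph{every} equivariant map---hence the test map for every choice of $f_1,\ldots,f_m$---must hit zero; no genericity or limiting argument is required. The deformation idea in the proof of Theorem~\ref{rattray} serves the opposite purpose: it is a device to \emph{establish} non-vanishing of an Euler class by exhibiting a single transverse section, not to extend a generic result to all maps.
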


\begin{rem}
The degree of the polynomial 
\[
\prod_{1\leq i<j\leq k}(t_{i}+t_{j})=\det\left( t_{i}^{j-1}\right) _{i,j=1}^{k}
\]
is at most $\frac{1}{2}k(k-1)$ and degree of each variable is at most $k-1$. Therefore,
\begin{equation}
\begin{array}{ccccc}
(k-1)m<n & \Longrightarrow & \prod_{1\leq i<j\leq k}(t_{i}+t_{j})^{m}\notin
\langle t_{1}^{n},\ldots ,t_{k}^{n}\rangle & \Longrightarrow & \left(
n,m,k\right) \in \mathcal{R}_{odd,sym}.%
\end{array}
\label{eq:bound-1}
\end{equation}%
Similarly, $2(k-1)m<n$ implies $(n,m,k)\in\mathcal{R}_{odd}$.
\end{rem}

\begin{rem}
Direct application of the criterion (d) of the theorem, for example, implies
that $\left( 3,2,2\right) $, $\left( 4,1,2\right) $, $\left( 4,2,2\right) $,
$\left( 5,m,2\right) $ for $1\leq m\leq 6$ and $\left( 5,1,3\right) $ are
elements of $\mathcal{R}_{odd,sym}^{orth}$. The most striking example is
that $\left( 5,6,2\right) \in \mathcal{R}_{odd,sym}^{orth}$ since the triple
does not fulfill even the inequality bound from the previous remark for
being element of $\mathcal{R}_{odd,sym}$. The fact $\left( 5,6,2\right) \in
\mathcal{R}_{odd,sym}^{orth}$ is the consequence of%
\begin{equation*}
\left( t_{1}+t_{2}\right)
^{6}=t_{1}^{6}+t_{1}^{4}t_{2}^{2}+t_{1}^{2}t_{2}^{4}+t_{2}^{6}\notin \langle
\bar{w}_{4},\bar{w}_{5}\rangle
\end{equation*}%
where%
\begin{equation*}
\begin{array}{lllll}
\bar{w}_{4} & = & w_{1}^{4}+w_{1}^{2}w_{2}+w_{2}^{2} & = &
t_{1}^{4}+t_{1}^{3}t_{2}+t_{1}^{2}t_{2}^{2}+t_{1}t_{2}^{3}+t_{2}^{4}, \\
\bar{w}_{5} & = & w_{1}^{5}+w_{1}w_{2}^{2} & = &
t_{1}^{5}+t_{1}^{4}t_{2}+t_{1}^{3}t_{2}^{2}+t_{1}^{2}t_{2}^{3}+t_{2}^{5},%
\end{array}%
\end{equation*}%
and $w_{1}=t_{1}+t_{2}$, $w_{2}=t_{1}t_{2}$.
\end{rem}

\medskip

Let us present some immediate consequences of Theorem \ref{ram-rattray-st} that generalize results from~\cite{mak2007-2}.

\begin{cor}
\label{ram-rattray-map} Let $\left( n,k,m\right) \in \mathcal{R}%
_{odd,sym}^{orth}$.
\begin{compactenum}[\rm(a)]
\item For every collection $\phi _{1},\ldots ,\phi _{m}$ of $m$ odd maps $%
S^{n-1}\rightarrow S^{n-1}$ there exists an orthonormal $k$-frame $\left(
e_{1},\ldots ,e_{k}\right) \in V_{n}^{k}$ such that for any $1\leq l\leq m$
the set $(\phi _{l}(e_{1}),\ldots ,\phi _{l}(e_{k}))$ is an orthonormal
frame too.

\item For every collection $g_{1},\ldots ,g_{m}$ of $m$ continuous even
functions $\mathbb{R}^{n}\rightarrow \mathbb{R}$ there exists an orthonormal
$k$-frame $\left( e_{1},\ldots ,e_{k}\right) \in V_{n}^{k}$ such that for
any $1\leq l\leq m$ and $1\leq i<j\leq k$
\begin{equation*}
g_{l}(e_{i}+e_{j})=g_{l}(e_{i}-e_{j}).
\end{equation*}
\end{compactenum}
\end{cor}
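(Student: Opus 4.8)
The plan is to derive both statements from Theorem~\ref{ram-rattray-st}(d) by manufacturing, from the given data, a collection of odd and symmetric functions $S^{n-1}\times S^{n-1}\to\R$ and then applying the hypothesis $(n,m,k)\in\mathcal{R}_{odd,sym}^{orth}$.

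\medskip

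\emph{Part (a).} Given odd maps $\phi_l\colon S^{n-1}\to S^{n-1}$, the natural choice is to set
\[
f_l(x,y)=\langle\phi_l(x),\phi_l(y)\rangle - \langle x,y\rangle,
\]
or rather, since we want $f_l$ to vanish exactly when $\phi_l(x)\perp\phi_l(y)$ while also being odd and symmetric, one should instead take $f_l(x,y)=\langle\phi_l(x),\phi_l(y)\rangle$ and note that this is symmetric but only \emph{even} in each variable, because $\phi_l(-x)=-\phi_l(x)$ forces $f_l(-x,y)=-f_l(x,y)$ — so in fact $f_l$ \emph{is} odd. Thus $f_l$ is odd and symmetric, and Theorem~\ref{ram-rattray-st}(d) (equivalently, the definition of $\mathcal{R}_{odd,sym}^{orth}$) yields an orthonormal $k$-frame $(e_1,\dots,e_k)\in V_n^k$ with $\langle\phi_l(e_i),\phi_l(e_j)\rangle=0$ for all $l$ and all $i<j$; that is exactly the assertion that $(\phi_l(e_1),\dots,\phi_l(e_k))$ is an orthonormal frame (the vectors are already unit since $\phi_l$ maps into $S^{n-1}$).

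\medskip

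\emph{Part (b).} Given continuous even functions $g_l\colon\R^n\to\R$, define
\[
f_l(x,y)=g_l(x+y)-g_l(x-y).
\]
This is manifestly symmetric in $x$ and $y$ up to sign: $f_l(y,x)=g_l(x+y)-g_l(y-x)=g_l(x+y)-g_l(x-y)$ using evenness of $g_l$, so $f_l$ is symmetric. For oddness: $f_l(-x,y)=g_l(y-x)-g_l(-x-y)=g_l(x-y)-g_l(x+y)=-f_l(x,y)$, again by evenness, and similarly $f_l(x,-y)=-f_l(x,y)$. Hence each $f_l$ is odd and symmetric, so $\mathcal{R}_{odd,sym}^{orth}$ supplies an orthonormal $k$-frame $(e_1,\dots,e_k)$ with $f_l(e_i,e_j)=0$ for all $l$ and $i<j$, i.e.\ $g_l(e_i+e_j)=g_l(e_i-e_j)$.

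\medskip

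I expect the only genuine subtlety to be bookkeeping the sign conventions in the definitions of ``odd'' and ``symmetric'' — one must check each of the two defining identities of oddness separately and use that $g_l$ is even (resp.\ that $\phi_l$ anticommutes with the antipode) in the right place; there is no topological content beyond citing $\mathcal{R}_{odd,sym}^{orth}$, which is already packaged in Theorem~\ref{ram-rattray-st}. A minor point worth a sentence: in (a) the resulting $k$-tuple $(\phi_l(e_1),\dots,\phi_l(e_k))$ is automatically a $k$-frame (linearly independent) precisely because its vectors are pairwise orthogonal unit vectors, so no nondegeneracy argument is needed.
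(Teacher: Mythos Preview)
Your proof is correct and follows exactly the paper's approach: for (a) take $f_l(x,y)=\langle\phi_l(x),\phi_l(y)\rangle$, for (b) take $f_l(x,y)=g_l(x+y)-g_l(x-y)$, and invoke the hypothesis $(n,m,k)\in\mathcal R_{odd,sym}^{orth}$. Your write-up is more detailed than the paper's one-line proof (in particular you verify oddness and symmetry explicitly, which is worthwhile), though the self-correcting passage in (a) where you momentarily call $f_l$ ``even in each variable'' before arriving at the right conclusion should be cleaned up in a final version.
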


\begin{proof}
For the first claim take $f_{l}(x,y)=(\phi _{l}(x),\phi _{l}(y))$ and apply Theorem~\ref{ram-rattray-st}, while for the second one take $f_{l}(x,y)=g_{l}(x+y)-g_{l}(x-y)$.
\end{proof}

\medskip

In some particular cases the obvious inequality bound (\ref{eq:bound-1}) can
be substantially improved by more precise cohomology computations.

\begin{thm}
\label{ram-2frames-improved}Let $n\in \mathbb{N}$ and $P(n)=\min \left\{
2^{s}~|~s\in \mathbb{N},2^{s}\geq n\right\} $. Then
\begin{equation*}
P(n)\geq m+2~\Longleftrightarrow ~n\geq \tfrac{1}{2}P(m+2)+1~\Longrightarrow
~(n,m,2)\in \mathcal{R}_{odd,sym}^{orth}.
\end{equation*}
\end{thm}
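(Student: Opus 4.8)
The plan is to deduce Theorem~\ref{ram-2frames-improved} from criterion (d) of Theorem~\ref{ram-rattray-st} in the case $k=2$, so the whole problem reduces to a concrete question about the ideal $\langle \bar w_{n-1}, \bar w_n\rangle$ inside $\mathbb{F}_2[t_1,t_2]$: we must show that
\begin{equation*}
(t_1+t_2)^m \notin \langle \bar w_{n-1}, \bar w_n\rangle \quad\text{whenever } n \geq \tfrac12 P(m+2)+1.
\end{equation*}
First I would set $w_1 = t_1+t_2$, $w_2 = t_1 t_2$ and rewrite everything in the subalgebra $\mathbb{F}_2[w_1,w_2]$, which is where $\bar w_l$ lives by definition. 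For $k=2$ the defining sum for $\bar w_l$ is $\bar w_l = \sum_{i_1 + 2i_2 = l}\binom{i_1+i_2}{i_1} w_1^{i_1} w_2^{i_2}$; these are (the mod~2 reductions of) the dual Wu classes / the classes arising from $\frac{1}{1+w_1+w_2}$, and the identity $\sum_l \bar w_l = (1+w_1+w_2)^{-1}$ in the graded ring should be recorded explicitly. The point of the equivalence ``$P(n)\geq m+2 \Leftrightarrow n \geq \tfrac12 P(m+2)+1$'' is purely arithmetic about powers of two and is easy; the content is the implication to membership in $\mathcal{R}_{odd,sym}^{orth}$.

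Next, the key computation. Note $H^*(\R P^\infty \times \R P^\infty;\mathbb{F}_2) = \mathbb{F}_2[t_1,t_2]$, and $\langle \bar w_{n-1},\bar w_n\rangle$ is (up to the usual identifications) the ideal whose quotient computes the cohomology of the relevant configuration space attached to $V_n^2$; indeed the symmetric functions $\bar w_{n-1},\bar w_n$ are the two top dual classes that cut $\mathbb{F}_2[w_1,w_2]$ down to $H^*(\text{the }2\text{-frame space})$. I would argue that in the ring $R = \mathbb{F}_2[w_1,w_2]/\langle \bar w_{n-1},\bar w_n\rangle$ the element $w_1^m$ is nonzero precisely under the stated bound on $n$. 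The cleanest route: $R$ is the mod~2 cohomology of the Stiefel-type manifold whose height (largest power of $w_1 = t_1+t_2$ surviving) is governed by the function $P$; this is where I would invoke the classical computation of the cup-length / height of $w_1$ in such quotients, which is well known to jump at powers of two — the height of $t_1 + t_2$ in $H^*(V_n^2)$-type rings is $2(n-1) - (\text{something involving }P)$, and a careful bookkeeping yields exactly ``$w_1^m \neq 0 \iff P(n) \geq m+2$''. Equivalently one can do it by hand: expand $(t_1+t_2)^m = \sum \binom{m}{a} t_1^a t_2^{m-a}$ and use the recursive structure of $\bar w_l$ (Pascal-triangle mod~2, i.e. Lucas' theorem) to show no reduction kills the monomial $t_1^{m}$-leading behavior as long as $2(n-1)$ exceeds the relevant threshold.

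The main obstacle I expect is precisely this cohomological height computation: proving the \emph{sharp} biconditional $P(n) \geq m+2 \Leftrightarrow w_1^m \notin \langle \bar w_{n-1},\bar w_n\rangle$ rather than just a one-sided inequality. The forward-to-nonmembership direction (giving the topological conclusion) only needs ``$\Rightarrow$'', which should follow from a relatively soft argument with the Steenrod squares acting on $\mathbb{F}_2[w_1,w_2]$ — one uses $\Sq$ to move between the $\bar w_l$ and shows the ideal they generate is closed under enough operations that $w_1^m$ cannot lie in it below the threshold. The reverse direction (showing the bound is tight, i.e. that the criterion \emph{fails} when $P(n) < m+2$) requires an explicit demonstration that $w_1^m$ \emph{does} lie in $\langle \bar w_{n-1},\bar w_n\rangle$, which amounts to an explicit polynomial identity; I would obtain it by downward induction on degree, repeatedly subtracting multiples of $\bar w_{n-1}$ and $\bar w_n$ and tracking the binomial coefficients mod~2 via Lucas' theorem. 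Finally I would feed the nonmembership conclusion into Theorem~\ref{ram-rattray-st}(d) with $k=2$ to land $(n,m,2)\in\mathcal{R}_{odd,sym}^{orth}$, completing the proof.
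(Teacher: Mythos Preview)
Your approach is correct and, in one respect, more direct than the paper's. You go straight through criterion~(d) of Theorem~\ref{ram-rattray-st} (the $(\mathbb{Z}_2)^2$-index criterion) and observe that since $(t_1+t_2)^m=w_1^m$ and $\bar w_{n-1},\bar w_n$ are all symmetric, the nonmembership question in $\mathbb{F}_2[t_1,t_2]$ passes to the subring $\mathbb{F}_2[w_1,w_2]$ (this reduction is valid because $\mathbb{F}_2[t_1,t_2]$ is free, hence faithfully flat, over $\mathbb{F}_2[w_1,w_2]$; you should say so explicitly). The paper instead redoes the index computation for the full group $W_2=D_8$, obtaining a criterion in $H^*(D_8;\mathbb{F}_2)=\mathbb{F}_2[x,y,w]/\langle xy\rangle$, and then discards the extra variable by enlarging the ideal with $x$; after this step the paper lands in exactly the same ring $\mathbb{F}_2[y,w]/\langle\bar w_{n-1},\bar w_n\rangle$ that you do. So the $D_8$ machinery buys nothing for this particular theorem beyond what criterion~(d) already gives, and your shortcut is legitimate.

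Two corrections are needed. First, the quotient $R=\mathbb{F}_2[w_1,w_2]/\langle\bar w_{n-1},\bar w_n\rangle$ is not the cohomology of a ``Stiefel-type manifold'' or of $V_n^2$: it is precisely $H^*\bigl(G^2(\mathbb{R}^n);\mathbb{F}_2\bigr)$, the cohomology of the Grassmannian of $2$-planes in $\mathbb{R}^n$, with $w_1,w_2$ the Stiefel--Whitney classes of the tautological bundle. Getting this identification right is what makes the literature available. Second, you should not try to redo the height computation by hand via Steenrod squares or Lucas' theorem; the exact result you need is Hiller's lemma \cite[Proposition~2.6]{hil1980A}, which the paper quotes: in $H^*\bigl(G^2(\mathbb{R}^n);\mathbb{F}_2\bigr)$ one has $w_1^{P(n)-2}\neq 0$ and $w_1^{P(n)-1}=0$. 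This gives $w_1^m\neq 0 \Longleftrightarrow m\le P(n)-2$, and only the nonvanishing direction is needed for the implication to $(n,m,2)\in\mathcal{R}_{odd,sym}^{orth}$, so your concern about establishing a ``sharp biconditional'' is unnecessary.
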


A further improvement of this result is possible, relating the Rattray problem for $2$-frames to the famous problem of embedding of projective spaces into a Euclidean space.

\begin{thm}
\label{ram-2frames-proj-emb}
If $\mathbb RP^{n-1}$ cannot be embedded into $\mathbb R^m$ because of the ``deleted square obstruction'', then
$$
(n,m,2)\in \mathcal R_{odd,symm}^{orth}.
$$
\end{thm}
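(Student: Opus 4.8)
The plan is to reduce the Rattray problem for orthonormal $2$-frames to a Borsuk–Ulam type statement about the Stiefel manifold $V_n^2$ with its natural $W_2 = (\mathbb Z_2)^2 \rtimes \Sigma_2$-action, and then to recognize the relevant equivariant obstruction as exactly the ``deleted square obstruction'' to embedding $\mathbb{RP}^{n-1}$ into $\mathbb{R}^m$. Concretely, given odd symmetric functions $f_1,\dots,f_m : S^{n-1}\times S^{n-1}\to\mathbb R$, we form the test map
\[
F : V_n^2 \to \mathbb R^m, \qquad F(e_1,e_2) = (f_1(e_1,e_2),\dots,f_m(e_1,e_2)),
\]
and we must show $F$ has a zero. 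The group $W_2$ acts on $V_n^2$ by sign changes and swapping of the two frame vectors; oddness and symmetry of each $f_l$ mean precisely that $F$ is equivariant with respect to a one-dimensional $W_2$-representation $U$ (the $(\mathbb Z_2)^2$ factors act by $-1$, the swap acts trivially — i.e.\ $f_l(-e_1,e_2)=-f_l(e_1,e_2)$, $f_l(e_1,-e_2)=-f_l(e_1,e_2)$, $f_l(e_2,e_1)=f_l(e_1,e_2)$), so the target is $U^{\oplus m}$ as a $W_2$-module. Absence of a zero of $F$ would yield a $W_2$-map $V_n^2 \to S(U^{\oplus m})$, i.e.\ to the unit sphere $S^{m-1}$ with this action, which we will contradict.

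Next I would pass from $V_n^2$ to a more tractable $W_2$-space. The antipodal action of the $(\mathbb Z_2)^2$ subgroup on $V_n^2$ has quotient a bundle over $\mathbb{RP}^{n-1}$ (choice of $e_1$ up to sign) whose fiber is again roughly a projective space; more usefully, the combined $W_2$-action relates $V_n^2$ to the \emph{deleted square} $\widetilde{(\mathbb{RP}^{n-1})^2} = \{(x,y)\in\mathbb{RP}^{n-1}\times\mathbb{RP}^{n-1} : x\neq y\}$ with its $\Sigma_2$-swap, which is the space governing the standard Haefliger–Weber embedding obstruction for $\mathbb{RP}^{n-1}\hookrightarrow\mathbb{R}^m$. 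The point is that a frame $(e_1,e_2)\in V_n^2$ is the same as an ordered pair of orthogonal lines together with unit vectors spanning them, and forgetting the unit-vector choices gives a $W_2$-equivariant map onto (a deformation retract of) the deleted square; conversely a pair of distinct lines can be made orthogonal by Gram–Schmidt, equivariantly up to homotopy. So a $W_2$-map $V_n^2\to S(U^{\oplus m})$ produces a $\Sigma_2$-map on the deleted square into the appropriate sphere, which is exactly the primary obstruction the hypothesis assumes does not vanish (``cannot be embedded because of the deleted square obstruction'' means precisely that no such $\mathbb Z_2$-map of the deleted square into $S^{m-1}$ with the antipodal swap exists — this is the Haefliger criterion).

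The hard part — and where I would spend most of the effort — is making the two-way comparison between $V_n^2$ and the deleted square of $\mathbb{RP}^{n-1}$ \emph{equivariant and homotopically precise}, so that nonexistence of the embedding-type $\mathbb Z_2$-map transfers back to nonexistence of the Rattray-type $W_2$-map. There are two subtleties: (i) keeping track of which $W_2$-representation sits in the target — one must check that the sphere $S(U^{\oplus m})$ arising from odd symmetric functions matches the normal representation in the deleted-square obstruction (an $m$-fold sum of the sign representation of the swap), rather than some twisted version; and (ii) the Gram–Schmidt retraction is only $W_2$-equivariant after a homotopy, so one needs the deleted square (pairs of distinct lines, not necessarily orthogonal) as the intermediate object and a $W_2$-deformation retraction of ``ordered pairs of linearly independent vectors modulo signs'' onto $V_n^2/(\pm)$. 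Once these identifications are in place, the result is immediate: an embedding obstruction that does not vanish obstructs the test map, hence forces a common orthogonal-frame zero, i.e.\ $(n,m,2)\in\mathcal R_{odd,sym}^{orth}$. I would also remark that this strictly refines Theorem~\ref{ram-2frames-improved}, since the deleted-square obstruction is known to be nonzero in a range wider than the crude bound $P(n)\geq m+2$, recovering the classical non-embedding results for real projective spaces.
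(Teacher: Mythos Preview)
Your overall strategy matches the paper's: reduce to showing that no $W_2$-map $V_n^2 \to S(R_2^{\oplus m})$ exists, then identify this with the deleted-square obstruction for $\mathbb{RP}^{n-1}$. But your subtlety (i) is not a routine check---it is the crux of the argument, and as you have set things up it actually \emph{fails}. You correctly record that on your representation $U=R_2$ the sign-change subgroup $(\mathbb{Z}_2)^2=\langle\sigma_1,\sigma_2\rangle$ acts by $-1$ while the swap $\tau$ acts trivially. Consequently a $W_2$-map $V_n^2\to S(U^{\oplus m})$ does \emph{not} descend through your ``forget the unit vectors'' quotient $V_n^2\to V_n^2/(\mathbb{Z}_2)^2$, and even if it did, the residual $\Sigma_2$-action on the target would be trivial rather than antipodal---the opposite of what the deleted-square obstruction requires. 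So the identification you hope for does not hold under the naive quotient, and your own description of $U$ already contradicts the claim that it ``matches the sign representation of the swap''.

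The paper's fix is a specific outer automorphism of $D_8$: set $\sigma'_1=\sigma_1\sigma_2\tau$, $\sigma'_2=\tau$, $\tau'=\sigma_1$. The standard $D_8$-representation on $\mathbb{R}^2$ is unchanged under this automorphism (change basis to $e'_1=e_1+e_2$, $e'_2=-e_1+e_2$), so the $D_8$-space $V_n^2$ is the same; but on $R_2$ the new generators now act with $\sigma'_1,\sigma'_2$ trivial and $\tau'$ antipodal. Quotienting $V_n^2$ by the \emph{new} Klein four-subgroup $\langle\sigma'_1,\sigma'_2\rangle$ gives the space of ordered pairs of orthogonal lines (spanned by $e'_1,e'_2$), the target $R_2^{\oplus m}$ descends because that subgroup acts trivially on it, and the residual $\mathbb{Z}_2=\langle\tau'\rangle$ acts antipodally on the target---exactly the deleted-square setup. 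Your Gram--Schmidt remarks about the homotopy equivalence with the deleted square are fine, but they only become relevant once the correct Klein four-subgroup of $D_8$ has been chosen; that choice, equivalently the change of frame basis to $e_1\pm e_2$, is the missing idea.
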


\begin{rem}
The \emph{deleted square obstruction} for an embedding $M\to \R^m$ is the obstruction to the existence of a $\Z_2$-equivariant map $\left(M\times M\right)\setminus \Delta(M) \to S^{m-1}$.
Here $\Z_2$ acts on the deleted square $\left(M\times M\right)\setminus\Delta(M)$ by interchanging coordinates and on $S^{m-1}$ antipodally. 
The Haefliger theory~\cite{hae1962} states that in the range $m\ge \frac{3n}{2}$ (the \emph{metastable range}) this is the only obstruction for embedding.
The results in~\cite{dav1984} (see also the table~\cite{dav-table} for some low-dimensional cases) show that asymptotically the required inequality for embedding of the projective space has the form $m\ge 2n - O(\log n)$, i.e., falls into the metastable range. 
It follows that for large enough $n$ the condition $(n,m,2)\in \mathcal R_{odd,symm}^{orth}$ also has the asymptotic form $m\le 2n - O(\log n)$.
\end{rem}

Let us state more results in case $k=3$. 
If we want to calculate in mod $2$ equivariant cohomology, we may consider the Sylow subgroup $W_3^{(2)} = D_8\times \Z_2$ ($D_8$ is the square group). 
We obtain the following algebraic criterion.

\begin{thm}
\label{ram-3frames}
Consider the graded algebra $\mathbb F_2[x,y,w,t]$ with $\dim x = \dim y = \dim t = 1$, $\dim w = 2$, and relation $xy=0$. Put
\begin{compactenum}
\item $w_* = (1 + x + y + w)(1+t)$;
\item $\bar w_* = (w_*)^{-1}$.
\end{compactenum}
In the above notation, if $y^m(t^2 + t(x+y) + w)^m \not\in \langle\bar w_{n-2}, \bar w_{n-1}, \bar w_{n}\rangle$
then $(n, m, 3)\in \mathcal R_{odd,symm}^{orth}$.
\end{thm}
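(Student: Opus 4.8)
The plan is to follow the same configuration-space/test-map scheme that underlies Theorem~\ref{ram-rattray-st}(d), but specialized to $k=3$ and with the $2$-Sylow subgroup $W_3^{(2)}=D_8\times\Z_2$ of $W_3=(\Z_2)^3\rtimes\Sigma_3$ doing the bookkeeping. First I would set up the test map: given odd symmetric functions $f_1,\dots,f_m$ on $S^{n-1}\times S^{n-1}$, define on the Stiefel manifold $V_n^3$ the map $F=(F_1,\dots,F_m)$ with $F_l(e_1,e_2,e_3)=(f_l(e_1,e_2),f_l(e_1,e_3),f_l(e_2,e_3))\in\R^3$. A common zero of all the $F_l$ is exactly the orthonormal $3$-frame we want. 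The symmetric group $\Sigma_3$ permutes the three frame vectors and correspondingly permutes the three coordinates of each $\R^3$; the sign changes $(\Z_2)^3$ act on $V_n^3$ by flipping individual $e_i$, and because $f_l$ is \emph{odd} in each argument, $F_l$ transforms by the corresponding characters. So $F$ is a $W_3$-equivariant map $V_n^3\to (\R^3)^{\oplus m}$ with the appropriate $W_3$-action $U$ on the target, and the required zero is obstructed only if the equivariant Euler class of the bundle $V_n^3\times_{W_3}U$ vanishes in $H^*_{W_3}(V_n^3;\F_2)$.

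Next I would restrict attention to the $2$-Sylow subgroup $W_3^{(2)}=D_8\times\Z_2$ (this suffices for an $\F_2$-cohomology nonvanishing statement, by the standard transfer argument: if the Euler class is nonzero after restriction to a Sylow subgroup it is nonzero). The factor $D_8$ is the dihedral group of order $8$ acting on the pair $\{e_1,e_2\}$ together with their sign flips, and the extra $\Z_2$ flips $e_3$. This is exactly the situation that the algebra $\F_2[x,y,w,t]$ with $xy=0$ encodes: $H^*(BD_8;\F_2)=\F_2[x,y,w]/(xy)$ with $\dim x=\dim y=1$, $\dim w=2$, and $t$ is the generator of $H^1(B\Z_2;\F_2)$. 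The class $w_*=(1+x+y+w)(1+t)$ is the total Stiefel–Whitney class of the relevant $3$-dimensional real representation (the permutation-plus-sign representation on the coordinates of $\R^3$), and $\bar w_* = w_*^{-1}$ is the total class of its "complement" in the trivial bundle sense — the $\bar w_i$ are precisely the obstruction classes that cut down the cohomology of $V_n^3$ as a $W_3^{(2)}$-space. Concretely, $H^*_{W_3^{(2)}}(V_n^3;\F_2)$ is the quotient of $\F_2[x,y,w,t]/(xy)$ by the ideal generated by $\bar w_{n-2},\bar w_{n-1},\bar w_n$, these being the components of $\bar w_*$ in degrees $n-2,n-1,n$, coming from the standard fibration $V_n^3\to V_{n}^{?}$ description (or equivalently from the Gysin sequence of $S^{n-1}\to S^{n-2}$ iterated, twisted by the action). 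The Euler class of $V_n^3\times_{W_3^{(2)}}U$ is then the image of $\prod_{1\le i<j\le 3}(\text{class of }f_l\text{-factor})^{\dim}$ — and after tracking which character corresponds to which pair, the factor for the pair $(1,2)$ contributes the class of the sign representation tensored appropriately, the pairs $(1,3)$ and $(2,3)$ contribute $t^2+t(x+y)+w$-type classes, and a short computation identifies the total Euler class (raised to the $m$-th power since there are $m$ functions, each contributing a copy) with $y^m\,(t^2+t(x+y)+w)^m$ up to units in the quotient ring. Hence, if this element is not in the ideal $\langle\bar w_{n-2},\bar w_{n-1},\bar w_n\rangle$, the Euler class is nonzero, no equivariant map $V_n^3\to U\setminus\{0\}$ exists, the common zero exists, and $(n,m,3)\in\mathcal R_{odd,symm}^{orth}$.

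The main obstacle — and the place where care is genuinely needed — is the second paragraph's identification of $H^*_{W_3^{(2)}}(V_n^3;\F_2)$ and of the Euler class within it. Two things must be pinned down precisely: (i) that the $W_3^{(2)}$-equivariant cohomology of the Stiefel manifold is the asserted quotient, i.e. that the ideal generated by the three top components of $\bar w_*=w_*^{-1}$ is exactly the image of the ideal of relations (this requires checking that the relevant Serre spectral sequence of $V_n^3\to BW_3^{(2)}$-bundles collapses at $E_2$ with no extension problems over $\F_2$, which is where the explicit choice of $w_*$ as a genuine total Stiefel–Whitney class pays off); and (ii) the precise bookkeeping of which $1$-dimensional representation of $D_8\times\Z_2$ governs $f_l(e_i,e_j)$ for each pair $(i,j)$ — getting the characters right is what turns the abstract product of Euler classes into the concrete monomial $y^m(t^2+t(x+y)+w)^m$ rather than some other product of linear and quadratic factors. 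Once (i) and (ii) are settled, the deformation-to-a-generic-case and compactness argument is identical to that in the proof of Theorem~\ref{rattray} and Theorem~\ref{ram-rattray-st}, and needs no repetition.
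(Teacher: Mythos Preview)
Your proposal is correct and follows essentially the same route as the paper: restrict to the $2$-Sylow subgroup $W_3^{(2)}=D_8\times\Z_2$, identify $w_*=(1+x+y+w)(1+t)$ as the total Stiefel--Whitney class of the standard representation $\R^3$ (so that $\mathrm{Index}_{W_3^{(2)},\F_2}V_n^3=\langle\bar w_{n-2},\bar w_{n-1},\bar w_n\rangle$ by Proposition~\ref{prop:StiefelG}), and compute the Euler class of $R_3$ via the decomposition $R_3\cong R_2(D_8)\oplus\bigl(\R^2(D_8)\otimes\R^1(\Z_2)\bigr)$, giving $\mathfrak e(R_3)=y\cdot(t^2+t(x+y)+w)$. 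Two small remarks: the transfer argument you mention is unnecessary (a $W_3$-equivariant map is already $W_3^{(2)}$-equivariant, so nonexistence for the subgroup implies nonexistence for the full group), and your worry (i) about spectral-sequence collapse is already handled by the general Proposition~\ref{prop:StiefelG}, so no extra work is needed there.
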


\begin{rem}
It can be checked ``by hand'' than $(3,1,3)\in \mathcal R_{odd,symm}^{orth}$, i.e., the Rattray theorem for $n=3$ follows from this theorem.
\end{rem}

The results of Rattray type can be extended also in the following direction.
It can be asked in addition for the "diagonal" values $f_{l}(e_{i},e_{i})$
to be equal.

\begin{thm}
\label{ram-rattray-unit} Let $k$ and $m$ be positive integers. There exists
a function $n:\mathbb{N}^{2}\rightarrow \mathbb{N}$ such that for every $%
n\geq n(k,m)$ and any collection $f_{1},\ldots ,f_{m}$ of $m$ odd functions $%
S^{n-1}\times S^{n-1}\rightarrow \mathbb{R}$ there exists an orthonormal $k$%
-frame $\left( e_{1},\ldots ,e_{k}\right) \in V_{n}^{k}$ such that for any $%
1\leq l\leq m$ and $1\leq i<j\leq k$
\begin{eqnarray*}
f_{l}(e_{i},e_{j})=0&\text{~~~~~~~~and~~~~~~~~}&f_{l}(e_{1},e_{1})=\ldots=f_{l}(e_{k},e_{k}).
\end{eqnarray*}

\end{thm}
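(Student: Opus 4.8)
The plan is to reduce the statement to a Borsuk--Ulam type nonexistence claim for an equivariant map, and then to force the ``diagonal equality'' constraints by enlarging the target representation while increasing $n$. We work over the Stiefel manifold $V_n^k$, which carries a free action of the Weyl group $W_k = (\Z_2)^k \rtimes \Sigma_k$ (permuting and sign-changing the frame vectors). Given the functions $f_1,\dots,f_m$, first symmetrize them in each argument so that each $f_l$ may be assumed odd in $x$ and odd in $y$; this does not change the zero set we are looking for. Now form the ``off-diagonal'' test map whose components are $f_l(e_i,e_j)$ for $l=1,\dots,m$ and $i<j$, together with the ``diagonal-difference'' test map whose components are $f_l(e_i,e_i) - f_l(e_{i+1},e_{i+1})$ for $l=1,\dots,m$ and $i=1,\dots,k-1$. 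The $W_k$-action permutes the off-diagonal components according to its action on unordered pairs $\{i,j\}$ and acts trivially (up to sign from the $(\Z_2)^k$ factor on each $e_i$) appropriately; the crucial point is that the diagonal quantities $f_l(e_i,e_i)$ are \emph{invariant} under the $(\Z_2)^k$ factor because each $f_l$ is odd in \emph{each} variable, so $f_l(-e_i,-e_i)=f_l(e_i,e_i)$.

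The heart of the argument is then the following: if no common zero existed, we would obtain a $W_k$-equivariant map from $V_n^k$ to the unit sphere of the direct sum $U \oplus T$, where $U$ is the (sign-twisted) permutation representation on the off-diagonal pairs and $T$ is the $\Sigma_k$-representation on the diagonal differences (a sum of copies of the standard representation, with trivial $(\Z_2)^k$-action). The existence of such a map is obstructed, for $n$ large compared to $k$ and $m$, by a cohomological (characteristic-class) computation in $H^*(V_n^k/W_k;\F_2)$ exactly of the type already used to prove Theorem~\ref{ram-rattray-st}. The point is that the connectivity of $V_n^k$ grows linearly in $n$ (it is $(n-k-1)$-connected), while $\dim(U\oplus T) = m\binom{k}{2} + m(k-1)$ is a \emph{fixed} number once $k,m$ are fixed. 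So one takes $n(k,m)$ large enough that the relevant product of Stiefel--Whitney-type classes, $\prod_{i<j}(t_i+t_j)^{2m}$ multiplied by the extra factor coming from $T$, is forced to be nonzero in the appropriate quotient ring — equivalently, large enough that the target sphere is low-dimensional relative to the equivariant connectivity of $V_n^k$. A clean way to package this: the obstruction to an equivariant map $V_n^k \to S(U\oplus T)$ vanishes once $n$ exceeds a bound depending only on $\dim(U\oplus T)$ and $|W_k|$, by a standard equivariant obstruction-theory / Fadell--Husseini index argument over $B(\Z_2)^k$-type classifying spaces, which gives the function $n(k,m)$.

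The main obstacle I anticipate is bookkeeping the $W_k$-action correctly on the diagonal block: one must verify that the diagonal-difference representation $T$ is genuinely a representation of the full group $W_k$ (not just of $\Sigma_k$) with the $(\Z_2)^k$ factor acting trivially, and that the diagonal constraints are compatible with the freeness of the action — in particular that a zero of the combined map does land at a genuine orthonormal $k$-frame and not at a degenerate configuration. Once the equivariance is set up, the nonexistence reduces to showing a single product of one-dimensional classes is nonzero in $H^*(BW_k;\F_2)$ modulo the ideal generated by the classes pulled back from $V_n^k$, and this is monotone in $n$: increasing $n$ only makes the quotient ring larger in the relevant degrees, so there is a threshold $n(k,m)$ beyond which it is nonzero. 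A secondary technical point is the passage from ``no common zero'' to a map into the sphere: this uses compactness of $V_n^k$ and a partition-of-unity argument to normalize, standard in this circle of ideas, so I would state it briefly and move on. No explicit closed form for $n(k,m)$ is needed — the statement only asserts existence — so I would not attempt to optimize the bound.
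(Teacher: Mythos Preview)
Your configuration space / test map setup is correct and matches the paper's: the off-diagonal block lands in (what the paper calls) $U_k^{\oplus m}$ and the diagonal-difference block in $I_k^{\oplus m}$, and you correctly observe that the $(\Z_2)^k$-factor acts trivially on the diagonal block. But this last observation is precisely what breaks your proposed obstruction argument.

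You propose to detect the obstruction as a product of one-dimensional classes, writing it as $\prod_{i<j}(t_i+t_j)^{2m}$ times ``the extra factor coming from $T$'' and then arguing it is eventually nonzero in the quotient by $\mathrm{Index}_{(\Z_2)^k}V_n^k$. Over $(\Z_2)^k$, however, the bundle associated to $T=I_k^{\oplus m}$ is \emph{trivial} (you said so yourself), so its Euler class is $0$ and the entire product is $0$ for every $n$. No amount of increasing $n$ helps. Passing instead to $H^*(BW_k;\F_2)$ does not rescue the ``product of one-dimensional classes'' picture either: $I_k$ is the standard $\Sigma_k$-representation and is not a sum of one-dimensional $W_k$-representations for $k\ge 3$; worse, its mod $2$ Euler class restricts to $0$ on the $2$-Sylow of $\Sigma_k$ whenever that subgroup is not transitive (e.g.\ $k=3$), so the class you would need is genuinely zero in those cases.

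The paper closes this gap with two moves you are missing. First, it replaces $k$ by the next power of $2$ (harmless, since a solution for a larger frame restricts to one for a smaller frame). For such $k$ the $2$-Sylow subgroup $W_k^{(2)}\subset W_k$ has $\Sigma_k^{(2)}$ acting transitively on $\{1,\dots,k\}$, hence $(I_k)^{W_k^{(2)}}=\{0\}$ and therefore $(U_k^{\oplus m}\oplus I_k^{\oplus m})^{W_k^{(2)}}=\{0\}$. Second, instead of any explicit characteristic-class computation, it invokes the general $p$-group Borsuk--Ulam lemma (Bartsch--Clapp--Puppe / Clapp--Marzantowicz): for a $p$-group $G$ and a representation $V$ with $V^G=\{0\}$ there is an $n(G,V)$ such that every $G$-map from an $(n-1)$-connected free $G$-space to $V$ meets $0$. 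Since $V_n^k$ is $(n-k-1)$-connected and free under $W_k^{(2)}$, this yields $n(k,m)$ immediately. Your sketch gestures at a ``standard'' principle of this form for $W_k$, but $W_k$ is not a $p$-group, and the statement is false without the fixed-point hypothesis on a $p$-Sylow; the reduction to a power-of-$2$ value of $k$ is what makes the hypothesis hold.
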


\begin{rem}
Description of the function $n(k,m)$ remains a challenging open problem.
\end{rem}

\smallskip

The final result of Rattray type we present is the following theorem.

\begin{thm}
\label{rattray-gen}Let $\psi :S^{n-1}\rightarrow S^{m-1}$ be an odd
continuous map and $1\leq k\leq n$. For any linear subspace $L\subseteq
\mathbb{R}^{m}$ of codimension $n-k$ there exists an orthonormal $k$-frame $%
(e_{1},\ldots ,e_{k})$ in $\mathbb{R}^{n}$ such that $(\psi (e_{1}),\ldots
,\psi (e_{k}))$ is an orthonormal $k$-frame in $L$.
\end{thm}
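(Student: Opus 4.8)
The plan is to reduce the statement to a configuration-space / test-map argument of the type used for Theorem~\ref{rattray}. Let $\psi : S^{n-1}\to S^{m-1}$ be odd, fix $L\subseteq \R^m$ of codimension $n-k$, and write $\pi_L : \R^m \to L^\perp$ for the orthogonal projection. Associated to an orthonormal $k$-frame $(e_1,\dots,e_k)\in V_n^k$ we have two obstructions to the desired conclusion: the vectors $\psi(e_i)$ may fail to be pairwise orthogonal, and the vectors $\psi(e_i)$ may fail to lie in $L$. So I would set up the test map
\begin{equation*}
F : V_n^k \longrightarrow \Big(\bigoplus_{1\le i<j\le k} \R\Big) \oplus \Big(\bigoplus_{i=1}^k L^\perp\Big),\qquad
F(e_1,\dots,e_k) = \big( (\langle \psi(e_i),\psi(e_j)\rangle)_{i<j},\ (\pi_L(\psi(e_i)))_i \big),
\end{equation*}
and observe that a zero of $F$ is exactly a frame with the required property. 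The target has real dimension $\binom{k}{2} + k(n-k)$, which should match (up to the relevant equivariance) the dimension of $V_n^k$, namely $\sum_{i=0}^{k-1}(n-1-i) = kn - k - \binom{k}{2}$; these differ, so one must be careful, and the natural fix is to work equivariantly with respect to the Weyl-type group $W_k = (\Z_2)^k\rtimes \Sigma_k$ acting on $V_n^k$ by signs and permutations of the frame vectors, exactly as in the proof of Theorem~\ref{rattray}. The inner products $\langle\psi(e_i),\psi(e_j)\rangle$ are $\Sigma_k$-symmetric and, because $\psi$ is odd, change sign under each $\varepsilon_i$; the components $\pi_L(\psi(e_i))$ likewise carry a sign action from the $\varepsilon_i$ and are permuted by $\Sigma_k$. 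Thus $F$ is a $W_k$-equivariant map into a suitable $W_k$-representation $V$ with $V^{W_k}=0$, and the claim follows once one shows such an equivariant map must vanish — i.e. that the corresponding Euler class (or the relevant ideal-membership obstruction in $H^*_{W_k}(V_n^k;\F_2)$, in the spirit of Theorem~\ref{ram-rattray-st}) is non-zero.

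The key steps, in order, are: (1) verify the equivariance and identify precisely the $W_k$-module structure on both source (tangent-type considerations along the diagonal frame) and target $V$; (2) establish a \emph{linear model} where the conclusion holds and the zero set is transverse — the natural choice is to take $\psi$ to be (the restriction to spheres of) a linear isometric embedding $\R^n\hookrightarrow\R^m$ composed with a generic rotation, or more precisely to choose $L$ in general position so that $L^\perp$ meets the image $n$-plane in a subspace realizing the projection; for such a linear $\psi$ the orthonormal $k$-frames of $\R^n$ landing in $L$ with orthogonal image form a nonempty $W_k$-orbit, and a computation of the differential analogous to the $(\lambda_k-\lambda_l)s_{kl}$ calculation in the proof of Theorem~\ref{rattray} shows the zero set is non-degenerate and represents the generator of $H_0(V_n^k/W_k;\F_2)$; (3) connect an arbitrary odd $\psi$ to the linear model through odd maps (e.g. the straight-line homotopy $\psi_t = \tfrac{(1-t)\psi + t\psi_{\mathrm{lin}}}{\|(1-t)\psi + t\psi_{\mathrm{lin}}\|}$, which is well-defined and odd since the numerator, being a nonzero sum of an odd map with a value in the closed hemisphere determined by $\psi_{\mathrm{lin}}(x)$ when appropriately normalized, never vanishes — this point needs care); (4) conclude by the usual homotopy-invariance / compactness argument that the zero set of $F$ is nonempty for every odd $\psi$.

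I expect the main obstacle to be step (2)–(3): pinning down a clean linear base case for which the solution set is a single transverse $W_k$-orbit and for which the straight-line homotopy to a general odd $\psi$ stays within the class of odd maps $S^{n-1}\to S^{m-1}$. The subtlety is that, unlike in Theorem~\ref{rattray} where one deforms \emph{within} bilinear forms and convexity is automatic, here one is deforming \emph{maps to a sphere}, so the normalization can fail if $(1-t)\psi(x)$ and $t\psi_{\mathrm{lin}}(x)$ are ever antipodal; the remedy is either to perturb $\psi$ first so that $\langle\psi(x),\psi_{\mathrm{lin}}(x)\rangle > -1$ everywhere (possible generically when $m \ge n$, which is forced here since $\psi$ odd forces $m\ge n$ by Borsuk–Ulam), or to replace the homotopy argument by a direct cohomological computation showing the relevant obstruction class in $H^{\dim V}_{W_k}(V_n^k;\F_2)$ is nonzero, using the fibration $V_{n-k}^{?}\to V_n^k\to V_n^k/\!\!\sim$ and the known $\F_2$-cohomology of Stiefel manifolds. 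A secondary technical point is checking that the dimension bookkeeping genuinely forces a zero: one must verify $\dim V$ equals the "expected" codimension after accounting for the $W_k$-action, or alternatively that $L$ having codimension exactly $n-k$ (rather than something larger) is what makes the count work — this is the hypothesis's role and should fall out of the linear model once it is set up correctly.
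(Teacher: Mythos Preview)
Your test-map setup is exactly the paper's: an equivariant map $V_n^k\to R_k\oplus P_k^{n-k}$ (your $\bigoplus_i L^\perp$ is $P_k^{n-k}$ with $P_k=\R^k$ carrying the standard $W_k$-action), and the goal is to show the equivariant Euler class of that target is nonzero in $H^*_{W_k}(V_n^k;\mathbb F_2)$. Incidentally, the dimensions \emph{do} match: $\dim V_n^k = kn-k-\binom{k}{2} = \binom{k}{2}+k(n-k)$, so your remark that they differ is a slip.

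The genuine gap is in your steps (2)--(3). First, your proposed linear model fails to give an isolated zero: if $\psi$ is a linear isometric embedding then $\langle\psi(e_i),\psi(e_j)\rangle=\langle e_i,e_j\rangle\equiv 0$ on $V_n^k$, so the $R_k$-component of $F$ vanishes identically, and the zero set of $F$ is the full $O(k)$ of $k$-frames in the $k$-plane $\psi^{-1}(L)$ --- not a single $W_k$-orbit, and certainly not transverse. Second, your step (3) is a red herring. You do not need to homotope $\psi$ through odd maps to the sphere: any two $W_k$-equivariant maps $V_n^k\to R_k\oplus P_k^{n-k}$ are linearly (hence equivariantly) homotopic because the target is a vector space, so the Euler class depends only on the target representation, not on the particular section. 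Thus to show nonvanishing you may exhibit \emph{any} equivariant map with a single transverse $W_k$-orbit of zeros; it need not be of the form $F_\psi$ for some odd $\psi$.

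The paper exploits precisely this freedom. Its model map $\tau$ takes $\tau_0(e_1,\ldots,e_k)=(f(e_i,e_j))_{i<j}$ for a \emph{generic symmetric bilinear form} $f$ (as in Theorem~\ref{rattray}), and $\tau_r(e_1,\ldots,e_k)=(x_{k+r}(e_1),\ldots,x_{k+r}(e_k))$ for $1\le r\le n-k$, where $x_{k+r}$ are the last $n-k$ coordinate functions on $\R^n$. Vanishing of the $\tau_r$ forces the frame into the standard $\R^k\subset\R^n$; vanishing of $\tau_0$ then forces it to diagonalize $f|_{\R^k\times\R^k}$, giving a unique $W_k$-orbit, transverse by the $(\lambda_k-\lambda_l)s_{kl}$ calculation you already cite. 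This sidesteps your sphere-homotopy worry entirely and replaces your degenerate linear model with one that actually has isolated zeros.
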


\begin{rem}
This theorem implies that $m$ must be at least $n$ (when considered
$k=n$), i.e. it implies the Borsuk--Ulam theorem.
\end{rem}

\subsection{Makeev type results}

The following theorem gives sufficient conditions for $(n,m,k,l)$ to be in $\mathcal{M}^{\ast }$.

\begin{thm}
\label{Makeev-general} Let $(n,m,k,l)\in \mathbb{N}^{4}$. Then
\begin{compactenum}[\rm(a)]

\item $\prod_{\substack{ s_{1},\ldots ,s_{k}\in \mathbb{Z}_{2} \\ 1\leq
s_{1}+\cdots +s_{k}\leq l}}(s_{1}t_{1}+s_{2}t_{2}+\cdots
+s_{k}t_{k})^{m}\notin \langle t_{1}^{n+1},\ldots ,t_{k}^{n+1}\rangle
~\Longrightarrow ~(n,m,k,l)\in \mathcal{M},$

\item $\frac{1}{t_{1}\cdots t_{k}}\prod_{\substack{ s_{1},\ldots ,s_{k}\in \mathbb{%
Z}_{2} \\ 1\leq s_{1}+\cdots +s_{k}\leq l}}(s_{1}t_{1}+s_{2}t_{2}+\cdots
+s_{k}t_{k})^{m}\notin \langle \bar{w}_{n-k+1},\ldots ,\bar{w}_{n}\rangle $~$%
\Longrightarrow ~(n,m,k,l)\in \mathcal{M}^{orth}$.

\end{compactenum}
\end{thm}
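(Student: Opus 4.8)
The plan is to set up the standard configuration-space/test-map scheme for the Makeev problem and then translate the resulting Borsuk--Ulam type statement into the algebraic criterion stated. For part (a), the configuration space of candidate solutions is a product of $k$ copies of $S^n$: each hyperplane $H_j = \{x : \langle x, v_j\rangle = \alpha_j\}$ is encoded by a point $(\alpha_j, v_j) \in S^n \subset \R \oplus \R^n$ (after a standard compactification trick allowing ``hyperplanes at infinity'' and hyperplanes through points as in Ramos and in \cite{zvm2006}). The relevant symmetry group is $W = (\Z_2)^k$, where the $j$-th generator acts on $S^n$ antipodally (this corresponds to swapping the two halfspaces $H_j^+ \leftrightarrow H_j^-$, i.e. negating $(\alpha_j, v_j)$). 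The test map records, for each measure $\mu_i$ and each subset $\emptyset \neq S \subseteq \{1,\dots,k\}$ with $|S| \le l$, the collection of deviations $\mu_i(\mathcal{O}) - 2^{-|S|}$ over the orthants $\mathcal{O}$ cut out by $\{H_j : j \in S\}$; a common zero of all these functions is exactly an arrangement in which every $l$-subset equiparts every measure.

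The key steps, in order, are: (i) identify the target representation of $W$. For a fixed $S$ of size $s$, the space of functions ``$\mu(\mathcal{O}) - 2^{-s}$ indexed by orthants, summing to zero over orthants'' is the reduced regular representation of $(\Z_2)^s$ over $\R$, which splits as a sum of the $2^s - 1$ nontrivial one-dimensional real representations of $(\Z_2)^s$; pulled back to $W = (\Z_2)^k$, a character indexed by a nonzero $(s_1,\dots,s_k) \in (\Z_2)^k$ supported on $S$ appears once per measure. So the test map, after subtracting means, is a $W$-map from $(S^n)^k$ to a sphere in $\bigoplus_i \bigoplus_{1 \le |s| \le l} \R_{\chi_s}^{\oplus}$; if no such $W$-equivariant map to the corresponding sphere exists, a zero must occur. (ii) Apply the standard Fadell--Husseini / Euler-class obstruction: the nonexistence of such a map follows once the product of the Euler classes (equivalently, mod-2 top Stiefel--Whitney classes) of the constituent line bundles over $B(\Z_2)^k$ is nonzero after restriction along $(S^n)^k \hookrightarrow (S^\infty)^k$. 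In $H^*(B(\Z_2)^k;\F_2) = \F_2[t_1,\dots,t_k]$ the Euler class of the line bundle for character $\chi_{(s_1,\dots,s_k)}$ is $s_1 t_1 + \dots + s_k t_k$, and the cohomology of $(S^n)^k$ as a module kills exactly the ideal $\langle t_1^{n+1},\dots,t_k^{n+1}\rangle$; with each character appearing $m$ times, the product of Euler classes is precisely $\prod_{1 \le s_1+\dots+s_k \le l}(s_1 t_1 + \dots + s_k t_k)^m$, which gives (a).

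For part (b) the only change is that the configuration space is replaced by the Stiefel manifold $V_n^k$ of orthonormal $k$-frames (the normal vectors $v_1,\dots,v_k$ must be mutually orthogonal), still with its free $W = (\Z_2)^k$-action by sign changes of the frame vectors; the offsets $\alpha_j$ contribute a trivial contractible factor. Here one uses that $H^*(V_n^k/W ; \F_2)$, or rather the relevant Fadell--Husseini index, is governed by the $\bar w$-classes: the total space $V_n^k$ fibers over a point with the cohomology of a product of spheres $S^{n-k} \times \cdots \times S^{n-1}$, and the Borel construction $EW \times_W V_n^k$ has the ideal $\langle \bar w_{n-k+1}, \dots, \bar w_n\rangle$ as the kernel governing which products of Euler classes survive — this is exactly the computation already underlying Theorem~\ref{ram-rattray-st}(c)--(d). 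The extra factor $\frac{1}{t_1\cdots t_k}$ compared to (a) arises because on the Stiefel manifold one of the $l=1$ characters (the ``bisection'' deviation for each single hyperplane $H_j$ alone) is automatically controlled by the existing framing, so the corresponding linear factors $t_1,\dots,t_k$ divide out of the obstruction product; equivalently, halving by a single hyperplane for each $j$ is free once we have shown the rest, as the argument in \cite[Theorem~4]{mak2007-1} uses.

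The main obstacle I expect is step (ii) in the orthogonal case: correctly identifying the Fadell--Husseini index of $V_n^k$ with the $(\Z_2)^k$-action (as opposed to the full Weyl group $W_n$ used in the Rattray theorem) and verifying that the surviving quotient is generated precisely by the $\bar w$-classes with the stated degree shift, together with the bookkeeping that reconciles the $l=1$ single-hyperplane characters with the framing so that the $\frac{1}{t_1\cdots t_k}$ factor is exactly right and no spurious or missing linear factors appear. The combinatorics of which characters $\chi_{(s_1,\dots,s_k)}$ occur — all nonzero tuples with $1 \le \sum s_j \le l$, each with multiplicity $m$ — is routine once the reduced regular representation splitting is in hand, so I would state it as a lemma and move on.
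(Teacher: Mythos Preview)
Your treatment of part (a) is essentially the paper's argument: the same configuration space $(S^n)^k$, the same character decomposition of the target, and the same Fadell--Husseini index $\langle t_1^{n+1},\ldots,t_k^{n+1}\rangle$.

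For part (b), however, your explanation of the factor $\tfrac{1}{t_1\cdots t_k}$ is not right, and the two sentences you wrote about it are actually in tension with each other. If the offsets $\alpha_j$ really were a ``trivial contractible factor'' appended to $V_n^k$, then the test map would still land in $S_{1l}^{\oplus m}$ and the obstruction Euler class would be the \emph{full} product $\prod (s_1t_1+\cdots+s_kt_k)^m$ with no division --- nothing about the orthonormal framing by itself forces any single hyperplane to bisect any measure. The framing fixes directions, not offsets, so the $l=1$ characters are \emph{not} ``automatically controlled by the existing framing.''

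The mechanism the paper uses (and which you should use) is different: pick one of the measures, say $\mu_1$ with connected support, and for each frame vector $e_j$ \emph{define} $\alpha_j$ to be the unique offset for which the hyperplane $H_j\perp e_j$ bisects $\mu_1$. This makes the configuration space genuinely equal to $V_n^k$ (no extra $\R^k$ factor), and it kills the $k$ single-hyperplane deviations for $\mu_1$ only. Consequently the test map factors through the smaller target $S_{2l}\oplus S_{1l}^{\oplus(m-1)}$, whose Euler class is exactly $\tfrac{1}{t_1\cdots t_k}\prod_{1\le \sum s_j\le l}(s_1t_1+\cdots+s_kt_k)^m$. The division by $t_1\cdots t_k$ thus removes one copy of each linear factor $t_j$ (corresponding to $\mu_1$), not anything coming from the Stiefel geometry per se. Once you make this correction, the rest of your outline --- identifying $\mathrm{Index}_{(\Z_2)^k,\mathbb F_2}V_n^k=\langle \bar w_{n-k+1},\ldots,\bar w_n\rangle$ and invoking monotonicity of the index --- goes through exactly as in the paper.
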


\begin{rem}
By considering maximal degree of the test polynomial in every variable we
can get a rough bound
\begin{equation*}
n\geq m\left( \sum_{i=0}^{l}\binom{k-1}{i}\right) ~\Longrightarrow
~(n,m,k,l)\in \mathcal{M}\text{.}
\end{equation*}
\end{rem}

\begin{rem}
Notice that for $m=1$ and $l=2$ algebraic conditions of Theorem \ref%
{Makeev-general} (b) and Theorem \ref{ram-rattray-st} (d) coincide.
\end{rem}

\begin{rem}
For $l=k$, the case (a) is equivalent to the main result of the paper by
Mani-Levitska, S.~Vre\'{c}ica, R.~\v{Z}ivaljevi\'{c} \cite[Theorem~39]{zvm2006}. They
obtained that%
\begin{equation*}
n\geq 2^{q+k-1}+r~\Longrightarrow ~(n,2^{q}+r,k,k)\in \mathcal{M}
\end{equation*}%
where $m=2^{q}+r$ and $0\leq r\leq 2^{q}-1$.
\end{rem}

Similar to Theorem~\ref{ram-2frames-proj-emb}, we prove another particular result on partitioning measures by pairs of hyperplanes. This result is a projective analogue of the ``ham sandwich'' theorem~\cite{st1942,ste1945}, the concept of ``projective measure partitions'' is due to Benjamin Matschke (private communication).

\begin{thm}
\label{proj-ham-sandwich}
Suppose $\mathbb RP^{n-1}$ cannot be embedded into $\mathbb R^m$ because of the ``deleted square obstruction''. Let $\mu_0,\ldots, \mu_m$ be $m+1$ absolutely continuous probabilistic measures on $\mathbb RP^{n-1}$. Then there exists a pair of hyperplanes $H_1, H_2\subseteq \mathbb RP^{n-1}$, partitioning every measure $\mu_i$ into two equal parts.
\end{thm}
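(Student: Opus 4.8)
## Proof proposal for Theorem~\ref{proj-ham-sandwich}

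The plan is to reduce the projective ham sandwich statement to a Borsuk--Ulam type coincidence between two equivariant maps, and then to recognize that the obstruction to such a coincidence is exactly the deleted square obstruction for $\mathbb{R}P^{n-1}$. First I would encode a hyperplane $H \subseteq \mathbb{R}P^{n-1}$ by its normal direction, i.e. by a point of $\mathbb{R}P^{n-1}$ itself, or equivalently by a unit vector $v \in S^{n-1}$ modulo sign; a pair of (unordered) hyperplanes is then a point of the symmetric square, and after removing the diagonal (the two hyperplanes should be distinct to cut transversally) we land in the deleted square $X = (\mathbb{R}P^{n-1} \times \mathbb{R}P^{n-1}) \setminus \Delta$, carrying the free $\Z_2$-action that swaps the two factors.

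Next I would build the test map. For a measure $\mu_i$ on $\mathbb{R}P^{n-1}$ and a pair of hyperplanes $(H_1,H_2)$ — with a choice of the two ``positive'' open sides — the signed discrepancy $\mu_i(H_1^+) - \mu_i(H_1^-)$ together with $\mu_i(H_2^+) - \mu_i(H_2^-)$ gives two real numbers; doing this for $i = 0, \ldots, m$ produces a point of $\R^{2(m+1)}$. One can always arrange (by the intermediate value theorem applied to translating along the two normal directions, exactly as in the classical ham sandwich proof) that each individual hyperplane bisects $\mu_0$, say, so that the first two coordinates vanish and the real content lives in $\R^{2m}$. Swapping $H_1 \leftrightarrow H_2$ swaps the two coordinates in each pair, and changing the sign of a normal swaps $H^+ \leftrightarrow H^-$; tracking these symmetries carefully, the map descends to a $\Z_2$-equivariant map from (a suitable quotient/subspace of) the deleted square $X$ to $S^{2m-1}$ with the antipodal action, provided no common zero exists. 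The vanishing of the whole vector is precisely the assertion that the pair $(H_1,H_2)$ bisects all $m+1$ measures.

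The key step is then the recognition that a $\Z_2$-equivariant map $X = (\mathbb{R}P^{n-1}\times\mathbb{R}P^{n-1})\setminus\Delta \to S^{m-1}$ is exactly the deleted square obstruction for embedding $\mathbb{R}P^{n-1} \hookrightarrow \R^m$, so by hypothesis no such map exists; one has to be a little careful that after the normalization using $\mu_0$ the target is $S^{m-1}$ and not $S^{2m-1}$, and that the diagonal can be deleted without loss (a pair of coincident hyperplanes fails to bisect generically, or one passes to a limit and uses compactness of the configuration space completed by allowing the second hyperplane to escape to infinity, as in Theorem~\ref{ram-2frames-proj-emb}). Assuming the nonexistence of the equivariant map, the test map must have a zero, which furnishes the desired pair of hyperplanes; a standard compactness/approximation argument handles the possibility that the zero occurs only in a limiting degenerate configuration.

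The main obstacle I expect is the bookkeeping of the $\Z_2$-actions and the dimension count: one must verify that the normalization against $\mu_0$ genuinely cuts the target sphere down from $S^{2m-1}$ to $S^{m-1}$ in a $\Z_2$-equivariant way, and that the ``two sides'' ambiguity of each hyperplane combines with the swap of the two hyperplanes to give precisely the antipodal $\Z_2$-action used in the definition of the deleted square obstruction — rather than a larger group such as $\Z_2 \times \Z_2$ or $D_8$, which would change the obstruction. Once the equivariance is pinned down correctly, the rest follows formally from the hypothesis together with the classical continuity-and-compactness arguments underlying ham sandwich type theorems.
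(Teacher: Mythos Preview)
Your test map is built on a geometric error: a single hyperplane $H\subset\mathbb{R}P^{n-1}$ does \emph{not} have two sides. The complement $\mathbb{R}P^{n-1}\setminus H$ is an affine $(n-1)$-space, hence connected, so the quantities $\mu_i(H_1^+)-\mu_i(H_1^-)$ that you write down are not defined. (This is exactly the content of the Remark following the theorem: only a \emph{pair} of hyperplanes partitions projective space, and it does so into two pieces, not four.) For the same reason there is no ``translating along the normal direction'' to make an individual hyperplane bisect $\mu_0$ --- projective hyperplanes are linear hyperplanes through the origin in $\mathbb{R}^n$, and they do not translate.

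The correct setup is the one used in the paper. Lift each $\mu_i$ to a centrally symmetric measure on $S^{n-1}$; a projective hyperplane becomes a linear hyperplane, which now \emph{does} have two hemispheres $H^\pm$, but by central symmetry $\mu_i(H^+)=\mu_i(H^-)$ automatically, so there is nothing to arrange there. The nontrivial content lives in the four quadrants $H_1^\pm\cap H_2^\pm$: central symmetry forces opposite quadrants to have equal measure, leaving exactly \emph{one} real condition per measure,
\[
f_i(H_1,H_2)=\mu_i(H_1^+\cap H_2^+)-\mu_i(H_1^+\cap H_2^-)-\mu_i(H_1^-\cap H_2^+)+\mu_i(H_1^-\cap H_2^-).
\]
This already explains why the target will be $\mathbb{R}^m$ rather than $\mathbb{R}^{2m}$. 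The role of $\mu_0$ is then not to normalize the map but to cut down the \emph{domain}: one shows that the locus $\{f_0=0\}$ is $D_8$-equivariantly homeomorphic to $V_n^2$. The remaining $f_1,\dots,f_m$ are visibly invariant under the sign changes $e_i\mapsto -e_i$ (the subgroup $(\mathbb{Z}_2)^2\subset D_8$), so they descend to a $\mathbb{Z}_2$-equivariant map $V_n^2/(\mathbb{Z}_2)^2\simeq\bigl(\mathbb{R}P^{n-1}\times\mathbb{R}P^{n-1}\bigr)\setminus\Delta\to\mathbb{R}^m$, and the deleted-square hypothesis forces a zero. Your worry about ``$\mathbb{Z}_2$ versus $D_8$'' is thus resolved not by a clever choice but by the observation that each $f_i$ is $(\mathbb{Z}_2)^2$-invariant, which is immediate once the correct $f_i$ is written down.
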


\begin{rem}
A single hyperplane does not partition a projective space, but two hyperplanes partition it into two parts. 
\end{rem} 

\begin{rem}
The condition is asymptotically $m\le 2n - O(\log n)$, as in Theorem~\ref{ram-2frames-proj-emb}.
\end{rem}

\section{Equivariant cohomology of the Stiefel manifold}

Let $V_{n}^{k}$ denote the Stiefel manifold of all orthonormal $k$-frames
in $\mathbb{R}^{n}$. Any subgroup $G\subseteq \mathrm{O}(k)$ acts naturally
on $k$-frames by
\begin{equation*}
(e_{1},\ldots ,e_{k})\cdot g=\left(
\sum_{j}e_{j}s_{j1},\ldots,\sum_{j}e_{j}s_{jk}\right)
\end{equation*}%
where $(e_{1},\ldots ,e_{k})\in V_{n}^{k}$ and $g=\left( s_{ij}\right)_{i,j=1}^{k}\in \mathrm{O}(k)$. 
The action is right, but it transforms in a left action in the usual way $g\cdot (e_{1},\ldots ,e_{k}):=(e_{1},\ldots
,e_{k})\cdot g^{-1}$.

\medskip

In this section we compute the Fadell--Husseini index of the Stiefel manifold $V_{n}^{k}$ with the respect to the action of any subgroup $G\subseteq \mathrm{O}(k)$ and coefficients $\mathbb{F}_{2}$, i.e., we determine the generators of the following ideal
\begin{equation*}
\mathrm{Index}_{G,\mathbb{F}_{2}}V_{n}^{k}=\ker \left( H^{\ast }(G;\mathbb{F}%
_{2})\longrightarrow H^{\ast }(\mathrm{E}G\times _{G}V_{n}^{k};\mathbb{F}%
_{2})\right) .
\end{equation*}%
In particular, we determine explicitly the index with respect to the subgroup $\mathbb{Z}_{2}^{k}$ of diagonal matrices with $\{-1,1\}$ entries on diagonal. One
description of the index $\mathrm{Index}_{\mathbb{Z}_{2}^{k},\mathbb{F}%
_{2}}V_{n}^{k}$ is given in the initial paper of Fadell and Husseini \cite[%
Theorem 3.16, page 78]{Fadell--Husseini}.

\subsection{ ~}

The cohomology of the Stiefel manifold $V_{n}^{k}$ with $\mathbb{F}_{2}$ coefficients is the quotient algebra (consult~\cite{bor1953})
\begin{equation*}
H^{\ast }\left( V_{n}^{k};\mathbb{F}_{2}\right) =\mathbb{F}%
_{2}[e_{n-k},\ldots,e_{n-1}]/\mathcal{J}_{n}^{k}
\end{equation*}%
where $\deg e_{i}=i$ and $\mathcal{J}_{n}^{k}$ is the ideal generated by the
relations%
\begin{equation*}
\begin{array}{lll}
e_{i}^{2}=e_{2i} &  & \text{for }2i\leq n-1 \\
e_{i}^{2}=0 &  & \text{for }2i\geq n.%
\end{array}%
\end{equation*}

\medskip

In what follows, for a vector bundle $F\rightarrow \xi \rightarrow B$ we denote by $w_{i}(\xi )\in H^{i}\left( B;\mathbb{F}_{2}\right) $ the associated Stiefel--Whitney classes, by $\bar{w}_{i}(\xi )\in
H^{i}\left( B;\mathbb{F}_{2}\right) $ its dual Stiefel--Whitney classes, $i\geq 0$. There is a relation between these classes expressed via the total class by $w\cdot \bar{w}=1$ or particularly for $l\geq 1$ by
\begin{equation*}
\bar{w}_{l}(\xi )=\sum_{\substack{ i_{1},i_{2},\ldots ,i_{k}\geq 0  \\ %
i_{1}+2i_{2}+\dots +ki_{k}=l}}\binom{i_{1}+\dots +i_{k}}{i_{1}\ i_{2}\
\ldots \ i_{k}}~w_{1}^{i_{1}}(\xi )\dots w_{k}^{i_{k}}(\xi ).
\end{equation*}

\medskip

Let us recall that:
\begin{compactenum}[\rm(a)]
\item the Grassmann manifold $G^{k}(\mathbb{R}^{\infty })$ of all $k$-flats in $\mathbb{R}^{\infty }$ is
the classifying space of the group $\mathrm{O}(k)$ and we denote $G^{k}(\mathbb{R}^{\infty })$
also by $\mathrm{BO}(k)$,

\item the Stiefel manifold $V_\infty^{k}$ of all $k$-frames in $\mathbb{R}^{\infty }$ as
a contractible free $\mathrm{O}(k)$ space serves as a model for $\mathrm{EO}(k)
$,

\item the associated canonical bundle:
\begin{equation*}
\begin{array}{ccccc}
\mathbb{R}^{k} & \longrightarrow  & \gamma ^{k} & \longrightarrow  & G^{k}(\mathbb{R}^{\infty })%
\end{array}
\end{equation*}%
can be seen as a Borel construction of the $\mathrm{O}(k)$-space $\mathbb{R}%
^{k}$ [where the action is given by the matrix multiplication from the left]:%
\begin{equation*}
\begin{array}{ccccc}
\mathbb{R}^{k} & \longrightarrow  & \mathrm{EO}(k)\times _{\mathrm{O}(k)}%
\mathbb{R}^{k} & \longrightarrow  & \mathrm{BO}(k)
\end{array}%
\end{equation*}

\item the cohomology of the Grassmanian $G^{k}\left(\mathbb{R}^{\infty}\right)\approx \mathrm{BO}(k)$ with
coefficients in $\mathbb{F}_{2}$ is the polynomial algebra generated by the Stiefel--Whithey classes $w_{1},\ldots ,w_{k}$ of the canonical vector bundle $\gamma^k$:
\begin{equation*}
H^{\ast }\left( \mathrm{BO}(k);\mathbb{F}_{2}\right) =\mathbb{F}_{2}\left[
w_{1},\ldots,w_{k}\right].
\end{equation*}
\end{compactenum}

\smallskip

\noindent Now we state a very useful result from~\cite{bor1953} (see also~\cite[Theorem 3.3]{Jaw}).

\begin{proposition}
\label{Prop:Jaw}Let $(E_{i}^{\ast ,\ast },d_{i})_{i\geq 2}$ denote the
Leray--Serre spectral sequence associated with the Borel construction
\begin{equation*}
\begin{array}{ccccc}
\mathbb{R}^{k} & \longrightarrow & \mathrm{EO}(k)\times _{\mathrm{O}(k)}%
\mathbb{R}^{k} & \longrightarrow & \mathrm{BO}(k)%
\end{array}%
.
\end{equation*}%
Then
\begin{equation*}
\mathrm{Index}_{\mathrm{O}(k),\mathbb{F}_{2}}V_{n}^{k}=\langle \bar{w}%
_{n-k+1},\ldots,\bar{w}_{n}\rangle \subset \mathbb{F}_{2}\left[ w_{1},\ldots ,w_{k}%
\right]
\end{equation*}
where $\bar{w}_{i}=\bar{w}_{i}\left( \gamma ^{k}\right) =d_{i-1}\left(e_{i-1}\right)$.
\end{proposition}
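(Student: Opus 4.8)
The plan is to analyze the Leray--Serre spectral sequence of the Borel construction $\mathbb{R}^k \to \mathrm{EO}(k)\times_{\mathrm{O}(k)}\mathbb{R}^k \to \mathrm{BO}(k)$, whose fibre is contractible, and identify the Stiefel manifold as the appropriate sphere bundle inside it. First I would recall that since $\mathbb{R}^k$ is $\mathrm{O}(k)$-equivariantly contractible (as a convex set with linear action), the total space $\mathrm{EO}(k)\times_{\mathrm{O}(k)}\mathbb{R}^k$ is homotopy equivalent to $\mathrm{BO}(k)$, so its cohomology is $\mathbb{F}_2[w_1,\dots,w_k]$ and the spectral sequence collapses trivially for this bundle. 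The point, however, is to look at the complement of the zero section, or more precisely to build up $V_n^k$ by iterating: the Stiefel manifold $V_n^k$ sits as the total space of a tower of sphere bundles $V_n^k \to V_n^{k-1} \to \cdots \to V_n^1 = S^{n-1}$, where the fibre of $V_n^j \to V_n^{j-1}$ is $S^{n-j}$. Equivariantly with respect to $\mathrm{O}(k)$, one wants to compare this with the corresponding tower built from the universal bundle $\gamma^k$, and the key observation from Borel's computation is that the transgression/differential in the Serre spectral sequence of the universal sphere bundle $S^{k-1}\to (\mathrm{EO}(k)\times_{\mathrm{O}(k)}(\mathbb{R}^k\setminus 0)) \to \mathrm{BO}(k)$ sends the fundamental class to $w_k$, and iterating this through the tower produces the classes $\bar{w}_{n-k+1},\dots,\bar{w}_n$ as the successive transgressions $d_{i-1}(e_{i-1})$.

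The main steps, in order, are: (1) set up the fibration $V_n^k \hookrightarrow \mathrm{EO}(k)\times_{\mathrm{O}(k)}V_n^k \to \mathrm{BO}(k)$ and observe that $\mathrm{EO}(k)\times_{\mathrm{O}(k)}V_n^k$ is an explicit manifold — namely it can be identified (up to homotopy) with the Stiefel bundle $V(\gamma^k \oplus \underline{\mathbb{R}}^{n-k})$ associated to $\gamma^k$ stabilized by a trivial bundle of rank $n-k$, i.e.\ the bundle of $k$-frames in the rank-$n$ bundle $\gamma^k\oplus\underline{\mathbb{R}}^{n-k}$ over $\mathrm{BO}(k)$; (2) quote Borel's theorem (our reference \cite{bor1953}) that in the Serre spectral sequence of such a frame bundle the classes $e_{n-k},\dots,e_{n-1}$ transgress precisely to the dual Stiefel--Whitney classes $\bar{w}_{n-k+1},\dots,\bar{w}_n$ of $\gamma^k$ — this is exactly the statement $\bar{w}_i = d_{i-1}(e_{i-1})$ in the proposition; (3) conclude that the edge homomorphism $H^*(\mathrm{BO}(k);\mathbb{F}_2) \to H^*(\mathrm{EO}(k)\times_{\mathrm{O}(k)}V_n^k;\mathbb{F}_2)$ has kernel exactly the ideal generated by these transgressions, which by definition of the Fadell--Husseini index gives $\mathrm{Index}_{\mathrm{O}(k),\mathbb{F}_2}V_n^k = \langle \bar{w}_{n-k+1},\dots,\bar{w}_n\rangle$.

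For step (3) one has to be slightly careful: the kernel of the edge map is not automatically the ideal generated by the transgressed generators — one needs that the spectral sequence is "multiplicatively generated by transgressions" so that everything that dies does so because of these relations and their consequences. This follows because $H^*(V_n^k;\mathbb{F}_2)$, as a module over the base, is a free module on monomials in $e_{n-k},\dots,e_{n-1}$ subject only to the relations $e_i^2 = e_{2i}$ or $0$; running the spectral sequence, the $E_\infty$-page is the associated graded of $H^*$ of the total space, and a standard argument (the base ring is polynomial, hence the Serre spectral sequence of this type of bundle behaves like a Koszul complex) shows the ideal of elements killed in the base is generated by the $\bar{w}_i$. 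I expect \textbf{this bookkeeping in step (3)}, reconciling "kernel of edge homomorphism" with "ideal generated by transgression images" in the presence of the $e_i^2$ relations, to be the main technical obstacle; everything else is a citation of Borel's classical computation plus the identification of the Borel construction of $V_n^k$ with the frame bundle of $\gamma^k\oplus\underline{\mathbb{R}}^{n-k}$.
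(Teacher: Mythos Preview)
The paper does not actually prove this proposition; it is stated as a citation of Borel~\cite{bor1953} and Jaworowski~\cite{Jaw}. So there is no ``paper's proof'' to compare against, only your attempted argument.

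Your step~(1) contains a genuine error. The Borel construction $\mathrm{EO}(k)\times_{\mathrm{O}(k)}V_n^k$ uses the \emph{right} $\mathrm{O}(k)$-action on frames (the one described in the paper), which is free with quotient $G^k(\mathbb{R}^n)$. The $k$-frame bundle $V_k(\gamma^k\oplus\underline{\mathbb{R}}^{n-k})$ over $\mathrm{BO}(k)$, on the other hand, is the associated bundle for the \emph{left} $\mathrm{O}(k)$-action on $V_n^k$ through the inclusion $\mathrm{O}(k)\hookrightarrow\mathrm{O}(n)$ on the first $k$ coordinates of the ambient $\mathbb{R}^n$; this action is not free (any frame lying entirely in the last $n-k$ coordinates is fixed). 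These two $\mathrm{O}(k)$-spaces are not equivariantly equivalent, and the resulting bundles over $\mathrm{BO}(k)$ are different. Worse, if you apply Borel's transgression theorem to your frame bundle you get the ordinary Stiefel--Whitney classes $w_{n-k+1}(\gamma^k\oplus\underline{\mathbb{R}}^{n-k}),\ldots,w_n(\gamma^k\oplus\underline{\mathbb{R}}^{n-k})$, and since $w_i(\gamma^k)=0$ for $i>k$ these all vanish once $n\ge 2k$ --- the wrong answer.

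The clean route is the one you almost take but then abandon: since the right $\mathrm{O}(k)$-action on $V_n^k$ is free, $\mathrm{EO}(k)\times_{\mathrm{O}(k)}V_n^k \simeq V_n^k/\mathrm{O}(k) = G^k(\mathbb{R}^n)$, and the index is simply the kernel of the restriction $H^*(\mathrm{BO}(k);\mathbb{F}_2)\to H^*(G^k(\mathbb{R}^n);\mathbb{F}_2)$. Borel's classical presentation $H^*(G^k(\mathbb{R}^n);\mathbb{F}_2)=\mathbb{F}_2[w_1,\ldots,w_k]/\langle\bar w_{n-k+1},\ldots,\bar w_n\rangle$ gives the result immediately; the transgression statement $d_*(e_{i-1})=\bar w_i$ then follows by matching degrees in the Serre spectral sequence of the homotopy fibration $V_n^k\to G^k(\mathbb{R}^n)\to \mathrm{BO}(k)$. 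Your worry in step~(3) about the kernel being generated by the transgressions is resolved for free here, since Borel's presentation already tells you the kernel exactly.
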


\subsection{ ~}

The Borel construction is a functorial construction and therefore there is a morphism of fiber bundles induced by the inclusion $\iota \colon G\subseteq\mathrm{O}(k)$:

\begin{diagram}
\mathrm{EO}(k)\times _{G}
V_{n}^{k}&\rTo&\mathrm{EO}(k)\times_{\mathrm{O}(k)}V_{n}^{k}\\
\dTo{\pi}& &\dTo{\mu}&  \\
\mathrm{B}G&\rTo^{B\iota} & \mathrm{BO}(k) \\
\end{diagram}

\noindent In the bundle on the left, $\mathrm{EO}(k)$ is used as a model for $\mathrm{E}G$. The action of $\mathrm{O}(k)$ on the Stiefel manifold $V_{n}^{k}$ is free. Therefore, the $E_{\infty }^{p,q}$-term of the Leray--Serre spectral sequence for the fibration $\mathrm{EO}(k)\times _{\mathrm{O}(k)}V_{n}^{k}\rightarrow \mathrm{BO}(k)$ has to vanish for $p+q>\mathrm{dim}~V_{n}^{k}$. Furthermore, $\mathrm{O}(k)$ acts trivially on the cohomology $H^{\ast }(V_{n}^{k};\mathbb{F}_{2})$ and so by Proposition~\ref{Prop:Jaw} we have that $d_{i}\left( e_{i}\right) =\bar{w}_{i+1}$ for $n-k\leq i\leq n-1$. Here $d_{i}$ denotes the $i$-th differential of the Leray--Serre spectral sequence. The morphism of the bundles we considered induces a morphism of the associated Leray--Serre spectral sequences as well. The morphism in the $E_{2}$-term on the $0$-column is the identity and on the $0$-row determines the restriction morphism $\iota ^{\ast }=$\textrm{%
res}$_{G}^{\mathrm{O}(k)}$. Thus,
\begin{eqnarray*}
\mathrm{Index}_{G,\mathbb{F}_{2}}V_{n}^{k} &=&\ker \pi ^{\ast }=\mathrm{res}%
_{G}^{\mathrm{O}(k)}\left( \ker \mu ^{\ast }\right) =\mathrm{res}_{G}^{%
\mathrm{O}(k)}\left( \langle \bar{w}_{n-k+1},\ldots ,\bar{w}_{n}\rangle \right)
\\
&=&\langle \mathrm{res}_{G}^{\mathrm{O}(k)}\left( \bar{w}_{n-k+1}\right)
,\ldots ,\mathrm{res}_{G}^{\mathrm{O}(k)}\left( \bar{w}_{n}\right) \rangle .
\end{eqnarray*}%
We have proved the following claim.

\begin{proposition}
$\mathrm{Index}_{G,\mathbb{F}_{2}}V_{n}^{k}=\langle \mathrm{res}_{G}^{%
\mathrm{O}(k)}\left( \bar{w}_{n-k+1}\right) ,\ldots ,\mathrm{res}_{G}^{\mathrm{O}%
(k)}\left( \bar{w}_{n}\right) \rangle .$
\end{proposition}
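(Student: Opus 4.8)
The statement is essentially an immediate corollary of the preceding computation, so the plan is to make the argument above precise and cite the relevant functoriality of the Fadell--Husseini index together with Proposition~\ref{Prop:Jaw}. First I would recall that for any $G$-space $X$ the Fadell--Husseini index $\mathrm{Index}_{G,\mathbb F_2}X$ is the kernel of the map $H^*(BG;\mathbb F_2)\to H^*(\mathrm EG\times_G X;\mathbb F_2)$ induced by the projection $\mathrm EG\times_G X\to BG$, and that this index is functorial in the following sense: an inclusion $\iota\colon G\hookrightarrow \mathrm O(k)$ gives rise to the morphism of fibre bundles displayed above, hence to a morphism of the associated Leray--Serre spectral sequences whose effect on the $0$-row of the $E_2$-page is precisely the restriction homomorphism $\mathrm{res}^{\mathrm O(k)}_G=(B\iota)^*\colon H^*(\mathrm{BO}(k);\mathbb F_2)\to H^*(BG;\mathbb F_2)$.

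Next I would run the spectral-sequence argument already sketched in the text. Since $\mathrm O(k)$ acts freely on $V_n^k$, the total space $\mathrm{EO}(k)\times_{\mathrm O(k)}V_n^k$ is homotopy equivalent to the finite-dimensional manifold $V_n^k/\mathrm O(k)$, so the $E_\infty$ page of its Leray--Serre spectral sequence vanishes above the line $p+q=\dim V_n^k$; combined with the fact that $\mathrm O(k)$ acts trivially on $H^*(V_n^k;\mathbb F_2)$ and with Proposition~\ref{Prop:Jaw}, this forces $d_i(e_i)=\bar w_{i+1}(\gamma^k)$ for $n-k\le i\le n-1$ and identifies $\mathrm{Index}_{\mathrm O(k),\mathbb F_2}V_n^k=\langle\bar w_{n-k+1},\ldots,\bar w_n\rangle$. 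Now the morphism of spectral sequences sends each transgressed class $\bar w_{i+1}$ in the base of the $\mathrm O(k)$-bundle to $\mathrm{res}^{\mathrm O(k)}_G(\bar w_{i+1})$ in the base of the $G$-bundle, because on the $0$-row the morphism is $(B\iota)^*$ and the differentials commute with the morphism. Hence the kernel of $\pi^*$ contains $\langle\mathrm{res}^{\mathrm O(k)}_G(\bar w_{n-k+1}),\ldots,\mathrm{res}^{\mathrm O(k)}_G(\bar w_n)\rangle$. For the reverse inclusion, observe that in the left-hand bundle the fibre is still $V_n^k$, the $G$-action on $H^*(V_n^k;\mathbb F_2)$ is still trivial (it is restricted from the trivial $\mathrm O(k)$-action), and the generators $e_{n-k},\ldots,e_{n-1}$ of the fibre cohomology are the only possible sources of differentials; since they are permanent cycles in the base only after transgressing to the classes $\mathrm{res}^{\mathrm O(k)}_G(\bar w_{i+1})$, the ideal $\ker\pi^*$ is generated exactly by those restricted classes.

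The one genuinely delicate point is the last sentence: I must be careful that there are no \emph{further} relations in $\ker\pi^*$ beyond the transgressions of the fibre generators, i.e.\ that the spectral sequence of the $G$-bundle degenerates after the differentials $d_{n-k},\ldots,d_{n-1}$ in the same way the $\mathrm O(k)$-bundle does. This is where I would argue that the morphism of spectral sequences is surjective on $E_2$-rows (the restriction map $H^*(\mathrm{BO}(k))\to H^*(BG)$ need not be surjective in general, so I cannot simply pull back the degeneration), and instead use the multiplicative structure: the $E_2$-page of the $G$-bundle is $H^*(BG;\mathbb F_2)\otimes H^*(V_n^k;\mathbb F_2)$ as an $H^*(BG;\mathbb F_2)$-module, the only indecomposable module generators in positive fibre degree are $e_{n-k},\ldots,e_{n-1}$ (together with their products, which are handled by the relations $e_i^2=e_{2i}$ or $e_i^2=0$ inherited from the fibre), and each $d_i$ is an $H^*(BG)$-module derivation. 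So once $d_i(e_i)=\mathrm{res}^{\mathrm O(k)}_G(\bar w_{i+1})$ is known, the entire spectral sequence is determined and collapses at $E_n$, giving $\ker\pi^*=\langle\mathrm{res}^{\mathrm O(k)}_G(\bar w_{n-k+1}),\ldots,\mathrm{res}^{\mathrm O(k)}_G(\bar w_n)\rangle$ exactly. This completes the proof; the bulk of it is bookkeeping once the functoriality of the transgression is in hand.
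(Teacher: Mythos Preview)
Your proposal is correct and follows the paper's approach: compare the Leray--Serre spectral sequences of the two Borel fibrations via the morphism induced by $G\hookrightarrow\mathrm O(k)$, note that it is the identity on the fibre column and $\mathrm{res}_G^{\mathrm O(k)}$ on the base row, and read off the index from the transgressions of the fibre generators $e_i$. You are in fact more careful than the paper, which simply asserts $\ker\pi^*=\mathrm{res}_G^{\mathrm O(k)}(\ker\mu^*)$ and moves on, whereas you correctly flag that $\mathrm{res}$ need not be surjective and supply the multiplicative/derivation argument to pin down $\ker\pi^*$ exactly.
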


\subsection{ ~}

\noindent In the final step we identify the restriction morphism $\mathrm{res}_{G}^{\mathrm{O}(k)}$. Consider $\mathbb{R}^{k}$ as an $\mathrm{O}(k)$-space where the action is given by the left matrix multiplication. The inclusion $\iota \colon G\subseteq \mathrm{O}(k)$ gives to $\mathbb{R}^{k}$ the structure of $G$-space. Again, there is a morphism of associated Borel constructions, which in this case is also a morphism of vector bundles:

\begin{diagram}
\mathrm{EO}(k)\times _{G}
\mathbb{R}^{k}&\rTo&\mathrm{EO}(k)\times_{\mathrm{O}(k)}\mathbb{R}^{k}\\
\dTo{\phi}& &\dTo{\psi}&  \\
\mathrm{B}G&\rTo^{B\iota} & \mathrm{BO}(k) \\
\end{diagram}

\noindent The naturality of the Stiefel--Whitney classes implies that
\begin{equation*}
w_{i}(\mathrm{EO}(k)\times _{G}\mathbb{R}^{k})=\iota ^{\ast }(w_{i})=\mathrm{%
res}_{G}^{\mathrm{O}(k)}(w_{i})
\end{equation*}%
and consequently
\begin{equation*}
\bar{w}_{i}(\mathrm{EO}(k)\times _{G}\mathbb{R}^{k})=\mathrm{res}_{G}^{%
\mathrm{O}(k)}(\bar{w}_{i}).
\end{equation*}%
Thus we have proved the following fact.

\begin{proposition}
\label{prop:StiefelG}$\mathrm{Index}_{G,\mathbb{F}_{2}}V_{n}^{k}=\langle
\bar{w}_{n-k+1}(\mathrm{EO}(k)\times _{G}\mathbb{R}^{k}),\ldots ,\bar{w}_{n}(%
\mathrm{EO}(k)\times _{G}\mathbb{R}^{k})\rangle .$
\end{proposition}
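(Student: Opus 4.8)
The plan is to identify $\mathrm{Index}_{G,\mathbb F_2}V_n^k$ by chasing the canonical bundle $\mathbb R^k\to\gamma^k\to\mathrm{BO}(k)$ down the inclusion $\iota\colon G\subseteq \mathrm{O}(k)$ and reading off what the differentials in the Leray--Serre spectral sequence become. First I would set up the Borel construction of the free $\mathrm{O}(k)$-space $V_n^k$ and recall, from Proposition~\ref{Prop:Jaw}, the starting point over the full group: in the spectral sequence of $\mathrm{EO}(k)\times_{\mathrm{O}(k)}V_n^k\to\mathrm{BO}(k)$ the generators $e_{n-k},\dots,e_{n-1}$ of $H^*(V_n^k;\mathbb F_2)$ transgress (in the appropriate differentials $d_i$) to the dual Stiefel--Whitney classes $\bar w_{n-k+1},\dots,\bar w_n$ of $\gamma^k$. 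Since $\mathrm{O}(k)$ acts trivially on $H^*(V_n^k;\mathbb F_2)$ and the total space has the homotopy type of $V_n^k/\mathrm{O}(k)$ (a finite-dimensional space), these are exactly the nonzero differentials, so $\ker\mu^*=\langle\bar w_{n-k+1},\dots,\bar w_n\rangle$.

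Next I would invoke functoriality of the Borel construction to get the map of fibrations $\bigl(\mathrm{EO}(k)\times_G V_n^k\to\mathrm{B}G\bigr)\to\bigl(\mathrm{EO}(k)\times_{\mathrm{O}(k)}V_n^k\to\mathrm{BO}(k)\bigr)$ covering $B\iota$, and the induced morphism of Leray--Serre spectral sequences. On the fibre the map is a homotopy equivalence, so it is the identity on the $E_2$-column $H^*(V_n^k;\mathbb F_2)$; on the base row it is the restriction $\mathrm{res}_G^{\mathrm{O}(k)}=\iota^*$. Naturality of differentials then forces $d_i(e_i)=\mathrm{res}_G^{\mathrm{O}(k)}(\bar w_{i+1})$ in the left-hand spectral sequence for $n-k\le i\le n-1$, and because the fibre's cohomology is generated multiplicatively by the $e_i$, these differentials (together with the obvious $d(e_i^2)=0$ relations already present) determine the full ideal of the fibration. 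Hence $\mathrm{Index}_{G,\mathbb F_2}V_n^k=\ker\pi^*=\langle \mathrm{res}_G^{\mathrm{O}(k)}(\bar w_{n-k+1}),\dots,\mathrm{res}_G^{\mathrm{O}(k)}(\bar w_n)\rangle$; this is recorded as the intermediate Proposition immediately preceding the target statement.

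Finally I would translate the restricted classes into genuine Stiefel--Whitney classes of an explicitly constructed bundle. View $\mathbb R^k$ as the $\mathrm{O}(k)$-space with the left matrix action; restricting along $\iota$ makes it a $G$-space, and the Borel construction gives a $k$-dimensional vector bundle $\mathrm{EO}(k)\times_G\mathbb R^k\to\mathrm{B}G$ mapping to $\gamma^k=\mathrm{EO}(k)\times_{\mathrm{O}(k)}\mathbb R^k$. Naturality of Stiefel--Whitney classes yields $w_i(\mathrm{EO}(k)\times_G\mathbb R^k)=\mathrm{res}_G^{\mathrm{O}(k)}(w_i)$, and since $w\cdot\bar w=1$ is preserved by ring maps, $\bar w_i(\mathrm{EO}(k)\times_G\mathbb R^k)=\mathrm{res}_G^{\mathrm{O}(k)}(\bar w_i)$. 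Substituting into the previous proposition gives exactly
\[
\mathrm{Index}_{G,\mathbb F_2}V_n^k=\langle \bar w_{n-k+1}(\mathrm{EO}(k)\times_G\mathbb R^k),\dots,\bar w_n(\mathrm{EO}(k)\times_G\mathbb R^k)\rangle,
\]
which is the claim. The one step that needs genuine care—rather than formal functoriality—is justifying that over $\mathrm{BO}(k)$ the differentials hitting $\bar w_{n-k+1},\dots,\bar w_n$ are the only nonzero ones and that they, plus the internal relations, cut out precisely $\ker\mu^*$ (not merely a subideal); this is where the finite-dimensionality of $V_n^k/\mathrm{O}(k)$ and the structure of $H^*(V_n^k;\mathbb F_2)$ as a quotient of a polynomial algebra on the $e_i$ are essential, and it is handled exactly as in Borel's original argument \cite{bor1953} cited for Proposition~\ref{Prop:Jaw}. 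Everything after that is a transport of structure along the single map $B\iota$.
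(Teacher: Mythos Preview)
Your proposal is correct and follows essentially the same route as the paper: first pull back the spectral-sequence computation of Proposition~\ref{Prop:Jaw} along $B\iota$ to obtain the intermediate identification $\mathrm{Index}_{G,\mathbb F_2}V_n^k=\langle\mathrm{res}_G^{\mathrm{O}(k)}(\bar w_{n-k+1}),\dots,\mathrm{res}_G^{\mathrm{O}(k)}(\bar w_n)\rangle$, and then identify each $\mathrm{res}_G^{\mathrm{O}(k)}(\bar w_i)$ with $\bar w_i(\mathrm{EO}(k)\times_G\mathbb R^k)$ via naturality of Stiefel--Whitney classes under the bundle map $\mathrm{EO}(k)\times_G\mathbb R^k\to\gamma^k$. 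Your added remark that the equality $\ker\pi^*=\langle\mathrm{res}(\bar w_{n-k+1}),\dots,\mathrm{res}(\bar w_n)\rangle$ (rather than mere containment) rests on the Borel-type structure theorem for $H^*(V_n^k;\mathbb F_2)$ is a welcome point of care that the paper leaves implicit.
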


\subsection{ ~}

Let $G=\mathbb{Z}_{2}^{k}$ be the subgroup of diagonal matrices with $\{-1,1\}$ entries. 
Let $A:=H^{\ast }\left( \mathbb{Z}_{2}^{k};\mathbb{F}_{2}\right) =\mathbb{F}_{2}[t_{1},\ldots ,t_{k}]$ be the polynomial
algebra with variables $t_{1},\ldots ,t_{k}$ of degree $1$.

It is well known that the $k$-dimensional real $\mathbb{Z}_{2}^{k}$-representation $\mathbb{R}^{k}$ can be decomposed into the sum of $1$-dimensional irreducible real $\mathbb{Z}_{2}^{k}$-representation. The total
Stiefel--Whitey class of $\mathrm{EO}(k)\times _{\mathbb{Z}_{2}^{k}}\mathbb{R}^{k}$ is given by
\begin{equation*}
w\left( \mathrm{EO}(k)\times _{\mathbb{Z}_{2}^{k}}\mathbb{R}^{k}\right)
=\prod\limits_{i=1}^{k}\left( 1+t_{i}\right) =1+\omega _{1}+\cdots +\omega _{k}
\end{equation*}%
where $\omega _{i}$ denotes both: the elementary symmetric polynomial of degree $i$ in variables $t_{1},\ldots ,t_{k}$ and the $i$-th Stiefel--Whitney class of $w_{i}\left( \mathrm{EO}(k)\times _{\mathbb{Z}_{2}^{k}}\mathbb{R}^{k}\right) $. For example, $\omega _{1}=t_{1}+t_{2}+\cdots+t_{k}$ while $\omega _{k}=t_{1}t_{2}\cdots t_{k}$. Finally, we obtain the following result.

\begin{proposition}
\label{Prop:IndexOfStiefel}Let $\bar{\omega}_{l}=\sum_{\substack{ %
i_{1},i_{2},\ldots ,i_{k}\geq 0  \\ i_{1}+2i_{2}+\dots +ki_{k}=l}}\binom{%
i_{1}+\dots +i_{k}}{i_{1}\ i_{2}\ \ldots \ i_{k}}~\omega _{1}^{i_{1}}\dots
\omega _{k}^{i_{k}}$, for $l\geq 1$, and then%
\begin{equation*}
\mathrm{Index}_{\mathbb{Z}_{2}^{k},\mathbb{F}_{2}}V_{n}^{k}=\langle \bar{%
\omega}_{n-k+1},\ldots ,\bar{\omega}_{n}\rangle \subset A.
\end{equation*}
\end{proposition}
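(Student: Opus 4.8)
The plan is to combine the three structural propositions already established (Proposition~\ref{Prop:Jaw}, Proposition~\ref{prop:StiefelG}, and the decomposition of the representation) into the explicit formula. By Proposition~\ref{prop:StiefelG} applied to $G=\mathbb{Z}_2^k$, we have
\[
\mathrm{Index}_{\mathbb{Z}_2^k,\mathbb{F}_2}V_n^k=\langle \bar{w}_{n-k+1}(\xi),\ldots,\bar{w}_n(\xi)\rangle,
\]
where $\xi=\mathrm{EO}(k)\times_{\mathbb{Z}_2^k}\mathbb{R}^k$ is the Borel construction of the standard $k$-dimensional representation. So the whole task reduces to computing the dual Stiefel--Whitney classes of this particular bundle over $\mathrm{B}\mathbb{Z}_2^k$.

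First I would recall that the standard $\mathbb{Z}_2^k$-representation on $\mathbb{R}^k$ (by diagonal sign matrices) splits as a direct sum $\bigoplus_{i=1}^k L_i$ of one-dimensional real representations, where $L_i$ is the sign representation on the $i$-th coordinate with the $j$-th generator acting by $(-1)^{\delta_{ij}}$. Consequently $\xi\cong\bigoplus_{i=1}^k \lambda_i$, where $\lambda_i=\mathrm{EO}(k)\times_{\mathbb{Z}_2^k}L_i$ is a real line bundle over $\mathrm{B}\mathbb{Z}_2^k=(\mathbb{RP}^\infty)^k$. By the standard identification $H^*(\mathbb{Z}_2^k;\mathbb{F}_2)=\mathbb{F}_2[t_1,\ldots,t_k]$, the first Stiefel--Whitney class $w_1(\lambda_i)$ is exactly $t_i$. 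By Whitney sum, the total class is $w(\xi)=\prod_{i=1}^k(1+t_i)=1+\omega_1+\cdots+\omega_k$, which is precisely the formula displayed just before the proposition; I would treat this identification of $w_1(\lambda_i)$ with $t_i$ (i.e.\ the convention fixing the isomorphism $H^*(\mathbb{Z}_2^k;\mathbb{F}_2)\cong A$) as the one genuinely conventional point to pin down.

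Next, the dual class $\bar{w}(\xi)$ is defined by $w(\xi)\cdot\bar{w}(\xi)=1$, i.e.\ $\bar{w}(\xi)=\prod_{i=1}^k(1+t_i)^{-1}=\prod_{i=1}^k(1+t_i+t_i^2+\cdots)$. Expanding this product and collecting the degree-$l$ part is a purely algebraic identity: since $(1+\omega_1+\cdots+\omega_k)^{-1}=\sum_{j\ge 0}(\omega_1+\cdots+\omega_k)^j$ in the graded-complete sense, the degree-$l$ homogeneous component equals $\sum_{i_1+2i_2+\cdots+ki_k=l}\binom{i_1+\cdots+i_k}{i_1\ i_2\ \ldots\ i_k}\omega_1^{i_1}\cdots\omega_k^{i_k}$, which is exactly the definition of $\bar\omega_l$ in the statement. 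This is the same multinomial expansion already recorded for $\bar{w}_l(\xi)$ in the general discussion preceding Proposition~\ref{Prop:Jaw}, so I would simply invoke that formula with $w_i=\omega_i$. Thus $\bar{w}_l(\xi)=\bar\omega_l$ for all $l\ge 1$, and substituting $l=n-k+1,\ldots,n$ into the conclusion of Proposition~\ref{prop:StiefelG} gives exactly $\mathrm{Index}_{\mathbb{Z}_2^k,\mathbb{F}_2}V_n^k=\langle\bar\omega_{n-k+1},\ldots,\bar\omega_n\rangle$.

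There is really no serious obstacle here: the proof is a bookkeeping assembly of results already in hand. The one place to be slightly careful is the claim that the $k$-dimensional defining representation of $\mathbb{Z}_2^k$ splits into the $k$ coordinate sign-representations and that these are pairwise non-isomorphic with $w_1(\lambda_i)=t_i$ — this is standard representation theory of elementary abelian $2$-groups, but it is what makes the total class factor as $\prod(1+t_i)$ rather than some other product, so I would state it explicitly. Everything else is the formal inversion of a total Stiefel--Whitney class, which has already been carried out in the preamble to this subsection.
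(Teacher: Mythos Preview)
Your proof is correct and follows essentially the same route as the paper: apply Proposition~\ref{prop:StiefelG} with $G=\mathbb{Z}_2^k$, decompose the defining representation into coordinate sign-representations to obtain $w(\xi)=\prod_i(1+t_i)=1+\omega_1+\cdots+\omega_k$, and invert via the multinomial formula already recorded in the preamble to recover $\bar w_l(\xi)=\bar\omega_l$. The paper's argument is exactly this sequence, stated more tersely in the paragraph preceding the proposition.
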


\section{Proof of Rattray type results}

\label{Section:Proof of Rattray}

\subsection{ ~}

The proofs of these results will be done via the configuration space~/~test
map method. There are two different natural configuration spaces of interest:%
\begin{equation*}
\begin{array}{lllll}
X & = & \left( S^{n-1}\right) ^{k} & = & \text{the space of all collections
of }k\text{ vectors on the sphere }S^{n-1}\text{,} \\
Y & = & V_{n}^{k} & = & \text{the space of all orthogonal }k\text{-frames in
}\mathbb{R}^{n}%
\end{array}%
.
\end{equation*}%
The group $W_{k}=(\mathbb{Z}_{2})^{k}\rtimes \Sigma _{k}\subset \mathrm{O}%
(k) $ acts naturally on both configurations spaces. For the generators $%
\varepsilon _{1},\ldots ,\varepsilon _{n}$ of the component $(\Z_{2})^{n}$ and $%
\left( e_{1},\ldots ,e_{k}\right) \in X$ or $Y$ the action is given by%
\begin{equation*}
\varepsilon _{i}\cdot \left( e_{1},\ldots ,e_{k}\right) =\left(
e_{1}^{\prime },\ldots ,e_{k}^{\prime }\right) \text{ where }e_{i}^{\prime
}=-e_{i}\text{ and }e_{j}^{\prime }=e_{j}\text{ for }j\neq i ,
\end{equation*}%
and for the permutation $\pi \in \Sigma _{k}$ by%
\begin{equation*}
\pi \cdot \left( e_{1},\ldots ,e_{k}\right) =\left( e_{\pi (1)},\ldots
,e_{\pi (k)}\right) .
\end{equation*}

\medskip

Let us consider the space $M_{k}$ of all real $k\times k$-matrices as a real
$\mathrm{O}(k)$-representation with respect to the action
\begin{equation*}
m\mapsto gmg^{-1}
\end{equation*}%
where $m\in M_{k}$ and $g$ is $k\times k$-matrix representing an element of $%
\mathrm{O}(k)$. Then $M_{k}$ has a structure of a real $W_{k}$%
-representation via the inclusion map $W_{k}\hookrightarrow \mathrm{O}(k)$.
Consider following real vector subspaces of $M_{k}$:%
\begin{equation}
\begin{tabular}{l}
$R_{k}$ of all $k\times k$ symmetric matrices with zeros on the diagonal, \\
$U_{k}$ of all $k\times k$ matrices with zeros on the diagonal, and \\
$I_{k}$ of all $k\times k$ matrices with zeros outside the diagonal and
trace zero.%
\end{tabular}
\label{eq:DefR_kU_k}
\end{equation}
These are all real $W_{k}$-subrepresentations of $M_{k}$. 
Moreover, when we consider only the subgroup $(\Z_2)^k$ there is a decomposition $U_{k}\cong R_{k}\oplus R_{k}$ of $(\Z_2)^k$-representation.

\smallskip

For an odd [and symmetric] function $f:S^{n-1}\times S^{n-1}\rightarrow
\mathbb{R}$ and $k$-vectors [$k$-frame] $\left( e_{1},\ldots ,e_{k}\right)$, we denote by:

\begin{compactitem}
\item $\mu _{f}\left( e_{1},\ldots ,e_{k}\right) \in U_{k}$ [$\mu _{f}\left(
e_{1},\ldots ,e_{k}\right) \in R_{k}$] the matrix given by entries%
\begin{equation*}
\left( \mu _{f}\left( e_{1},\ldots ,e_{k}\right) \right) _{ij}=\left\{
\begin{array}{ll}
f\left( e_{i},e_{j}\right)  & ,~i\neq j \\
0 & ,~i=j,%
\end{array}%
\right. 
\end{equation*}

\item $\eta _{f}\left( e_{1},\ldots ,e_{k}\right) \in I_{k}$ the matrix
given by entries%
\begin{equation*}
\left( \eta _{f}\left( e_{1},\ldots ,e_{k}\right) \right) _{ij}=\left\{
\begin{array}{ll}
f\left( e_{i},e_{i}\right) -c & ,~i=j \\
0 & ,~i\neq j,
\end{array}%
\right.
\end{equation*}%
where $c=\frac{1}{k}\left(f\left( e_{1},e_{1}\right) +\cdots +f\left( e_{k},e_{k}\right) \right)$.
\end{compactitem}

\subsection{\label{ram-rattray-proof}Proof of Theorem~\protect\ref%
{ram-rattray-st}}

Let $(n,m,k)\in \mathbb{N}^{3}$ and $f_{1},\ldots ,f_{m}$ be a collection of
$m$ odd [and symmetric] functions $S^{n-1}\times S^{n-1}\rightarrow \mathbb{R%
}$. Let us introduce the test maps for the Rattray problems:%
\begin{equation*}
\begin{array}{cccc}
\tau _{odd}:X\rightarrow U_{k}^{\oplus m}, & \tau _{odd,sym}:X\rightarrow
R_{k}^{\oplus m}, & \tau _{odd}^{orth}:Y\rightarrow U_{k}^{\oplus m}, & \tau
_{odd,sym}^{orth}:Y\rightarrow R_{k}^{\oplus m}.%
\end{array}%
\end{equation*}%
All four test maps are defined by the same formula%
\begin{equation*}
\begin{array}{ccc}
\left( e_{1},\ldots ,e_{k}\right) & \overset{\tau _{\ast }^{\ast }}{%
\longmapsto } & \left( \mu _{f_{r}}\left( e_{1},\ldots ,e_{k}\right) \right)
_{r=1}^{m}%
\end{array}%
\end{equation*}%
assuming appropriate domains and codomains. Have in mind that the test maps
are functions of the collection $f_{1},\ldots ,f_{m}$, even we abbreviate
this from notation. The test maps are all $W_{k}$-equivarian maps and
moreover have the following obvious but very important properties: If for
every collection $f_{1},\ldots ,f_{m}$ of $m$ odd [and symmetric] functions $%
S^{n-1}\times S^{n-1}\rightarrow \mathbb{R}$

\begin{compactitem}
\item $\{\mathbf{0\in }U_{k}^{\oplus m}\}\in \tau _{odd}\left( X\right)$, then
$(n,m,k)\in \mathcal{R}_{odd}$,

\item $\{\mathbf{0\in }U_{k}^{\oplus m}\}\in \tau _{odd}\left( X\right)$, then
$(n,m,k)\in \mathcal{R}_{odd}$,

\item $\{\mathbf{0\in }R_{k}^{\oplus m}\}\in \tau _{odd,sym}\left( X\right)$, then
$(n,m,k)\in \mathcal{R}_{odd,sym}$,

\item $\{\mathbf{0\in }U_{k}^{\oplus m}\}\in \tau _{odd}^{orth}\left( Y\right)$, then
$(n,m,k)\in \mathcal{R}_{odd}^{orth}$,

\item $\{\mathbf{0\in }R_{k}^{\oplus m}\}\in \tau _{odd,sym}^{orth}\left(
Y\right)$, then $(n,m,k)\in \mathcal{R}_{odd,sym}^{orth}$.
\end{compactitem}

\medskip

Let us assume that Theorem \ref{ram-rattray-st} fails in each case.
This means that for a specific collection $%
f_{1},\ldots ,f_{m}$ of $m$ odd [and symmetric] functions $\mathbf{0}\in
U_{k}^{\oplus m}$ or $\mathbf{0}\in R_{k}^{\oplus m}$ is not in the image of
any of the test maps. 
Therefore, we have constructed the following $W_{k}$-equivariant maps
\begin{equation}
\begin{array}{cccc}
X\rightarrow U_{k}^{\oplus m}\backslash \left\{ \mathbf{0}\right\} , &
X\rightarrow R_{k}^{\oplus m}\backslash \left\{ \mathbf{0}\right\} , &
Y\rightarrow U_{k}^{\oplus m}\backslash \left\{ \mathbf{0}\right\} , &
Y\rightarrow R_{k}^{\oplus m}\backslash \left\{ \mathbf{0}\right\} ,%
\end{array}
\label{eq:maps-1}
\end{equation}%
i.e., after $W_{k}$-equivariant homotopy, the $W_{k}$-equivariant maps
\begin{equation}
\begin{array}{cccc}
X\rightarrow S\left( U_{k}^{\oplus m}\right) , & X\rightarrow S\left(
R_{k}^{\oplus m}\right) , & Y\rightarrow S\left( U_{k}^{\oplus m}\right) , &
Y\rightarrow S\left( R_{k}^{\oplus m}\right) .%
\end{array}
\label{eq:maps-2}
\end{equation}%
Obviously all these maps are $\mathbb{Z}_{2}^{k}$-equivariant maps, where $\mathbb{Z}_{2}^{k}$ is the diagonal subgroup of $W_{k}$.

The basic monotonicity property of the Fadell--Husseini index theory \cite%
{Fadell--Husseini} states that when there is a $G$ map $A\rightarrow B$
between $G$-spaces $A$ and $B$ there has to be an inclusion of associated
indexes $\mathrm{Index}_{G,\mathbb{\ast }}A\supseteq \mathrm{Index}_{G,%
\mathbb{\ast }}B$. Using the subgroup $\mathbb{Z}_{2}^{k}$ of $W_{k}$ the
maps (\ref{eq:maps-2}) induce following inclusions%
\begin{equation}
\begin{array}{ccc}
\mathrm{Index}_{\mathbb{Z}_{2}^{k},\mathbb{F}_{2}}~X\supseteq \mathrm{Index}%
_{\mathbb{Z}_{2}^{k},\mathbb{F}_{2}}S\left( U_{k}^{\oplus m}\right) , &  &
\mathrm{Index}_{\mathbb{Z}_{2}^{k},\mathbb{F}_{2}}~X\supseteq \mathrm{Index}%
_{\mathbb{Z}_{2}^{k},\mathbb{F}_{2}}~S\left( R_{k}^{\oplus m}\right) , \\
\mathrm{Index}_{\mathbb{Z}_{2}^{k},\mathbb{F}_{2}}~Y\supseteq \mathrm{Index}%
_{\mathbb{Z}_{2}^{k},\mathbb{F}_{2}}~S\left( U_{k}^{\oplus m}\right) , &  &
\mathrm{Index}_{\mathbb{Z}_{2}^{k},\mathbb{F}_{2}}~Y\supseteq \mathrm{Index}%
_{\mathbb{Z}_{2}^{k},\mathbb{F}_{2}}~S\left( R_{k}^{\oplus m}\right) .%
\end{array}
\label{eq:inclusions}
\end{equation}%
We determine all Fadell--Husseini indexes appearing in (\ref{eq:inclusions}).

\begin{claim*}
With notation already introduced:
\begin{compactenum}[\rm(a)]
\item $\mathrm{Index}_{\mathbb{Z}_{2}^{k},\mathbb{F}_{2}}~X=\langle
t_{1}^{n},\ldots ,t_{k}^{n}\rangle ,$

\item $\mathrm{Index}_{\mathbb{Z}_{2}^{k},\mathbb{F}_{2}}~Y=\langle \bar{\omega%
}_{n-k+1},\ldots ,\bar{\omega}_{n}\rangle ,$

\item $\mathrm{Index}_{\mathbb{Z}_{2}^{k},\mathbb{F}_{2}}~S\left(
R_{k}^{\oplus m}\right) =\langle\prod_{1\leq a<b\leq k}(t_{a}+t_{b})^{m}\rangle,$

\item $\mathrm{Index}_{\mathbb{Z}_{2}^{k},\mathbb{F}_{2}}~S\left(
U_{k}^{\oplus m}\right) =\langle\prod_{1\leq a<b\leq k}(t_{a}+t_{b})^{2m}\rangle.$
\end{compactenum}
\end{claim*}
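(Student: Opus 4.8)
The plan is to compute each of the four Fadell--Husseini indices separately, using the structural results from Section~3 for the configuration spaces $X$ and $Y$, and a standard Euler class / product formula for the representation spheres.

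\medskip

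\noindent\textbf{Parts (a) and (b).}
For $Y = V_n^k$, part (b) is immediate: it is exactly Proposition~\ref{Prop:IndexOfStiefel} applied with $G = \Z_2^k$, since $\bar\omega_l$ in that proposition and $\bar w_l$ here denote the same polynomials. For $X = (S^{n-1})^k$, observe that the $W_k$-action restricted to $\Z_2^k$ makes $X$ a product of $k$ copies of the antipodal sphere $S^{n-1}$, with the $i$-th factor $\Z_2$ acting on the $i$-th copy. Hence the Borel construction splits as a product, $\mathrm{E}\Z_2^k \times_{\Z_2^k} X \simeq \prod_{i=1}^k (\mathrm{E}\Z_2 \times_{\Z_2} S^{n-1})$, and since $H^*(\mathrm{E}\Z_2 \times_{\Z_2} S^{n-1};\F_2) = \F_2[t_i]/(t_i^n)$, the Künneth theorem gives that the map $A = \F_2[t_1,\dots,t_k] \to H^*(\mathrm{E}\Z_2^k\times_{\Z_2^k} X;\F_2)$ is the quotient by $\langle t_1^n,\dots,t_k^n\rangle$. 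This proves (a).

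\medskip

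\noindent\textbf{Parts (c) and (d).}
The key point is to decompose $R_k$ and $U_k$ as $\Z_2^k$-representations. A symmetric matrix with zero diagonal has independent entries indexed by pairs $\{a,b\}$ with $a<b$; the generator $\varepsilon_i$ of $\Z_2^k$ conjugates such a matrix by $\mathrm{diag}(\dots,-1,\dots)$, which multiplies the $(a,b)$-entry by $-1$ precisely when $i \in \{a,b\}$. Thus $R_k \cong \bigoplus_{1\le a<b\le k} V_{ab}$ as a $\Z_2^k$-representation, where $V_{ab}$ is the $1$-dimensional representation on which $\varepsilon_a$ and $\varepsilon_b$ act by $-1$ and the remaining generators act trivially; its Euler class (first Stiefel--Whitney class of the associated line bundle over $\mathrm{B}\Z_2^k$) is $t_a + t_b$. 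Similarly $U_k \cong R_k \oplus R_k$ as noted in the text (each off-diagonal entry $(a,b)$ with $a\ne b$ pairs with $(b,a)$, and the two coordinates $x_{ab}\pm x_{ba}$ carry the representations $V_{ab}$ twice). Therefore $R_k^{\oplus m} \cong \bigoplus_{a<b} V_{ab}^{\oplus m}$ and $U_k^{\oplus m} \cong \bigoplus_{a<b} V_{ab}^{\oplus 2m}$. For any real representation $V$ of $\Z_2^k$ that is a sum of nontrivial $1$-dimensional pieces, the index of the sphere $S(V)$ is generated by the top Stiefel--Whitney class (Euler class) of the associated vector bundle over $\mathrm{B}\Z_2^k$, because the Gysin sequence of $S(V) \to \mathrm{E}\Z_2^k\times_{\Z_2^k} S(V) \to \mathrm{B}\Z_2^k$ shows $\ker$ is exactly the principal ideal generated by that Euler class. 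Multiplicativity of Euler classes over a direct sum then yields $\mathrm{Index}_{\Z_2^k,\F_2} S(R_k^{\oplus m}) = \langle \prod_{a<b}(t_a+t_b)^m\rangle$ and $\mathrm{Index}_{\Z_2^k,\F_2} S(U_k^{\oplus m}) = \langle \prod_{a<b}(t_a+t_b)^{2m}\rangle$, proving (c) and (d).

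\medskip

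\noindent\textbf{Main obstacle.}
The calculations above are each individually routine; the one point requiring genuine care is the identification of $R_k$ and $U_k$ as $\Z_2^k$-representations --- in particular verifying that no trivial summands appear (which is why $a<b$ forces the pair to involve at least one flipped coordinate, so every irreducible summand is nontrivial and its Euler class $t_a+t_b$ is a genuine nonzero element of $H^1(\mathrm{B}\Z_2^k;\F_2)$). Once the representation-theoretic bookkeeping is pinned down, the rest follows from the Künneth formula, Proposition~\ref{Prop:IndexOfStiefel}, and the standard fact that the Fadell--Husseini index of a representation sphere over $\mathrm{B}\Z_2^k$ is the principal ideal on the corresponding Euler class.
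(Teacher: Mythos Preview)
Your proof is correct and follows essentially the same approach as the paper: part (b) via Proposition~\ref{Prop:IndexOfStiefel}, part (a) via the product/K\"unneth structure of $(S^{n-1})^k$ (the paper simply cites Fadell--Husseini for this), and parts (c)--(d) via the decomposition $R_k\cong\bigoplus_{a<b}V_{ab}$ together with $U_k\cong R_k\oplus R_k$ and the fact that the index of a representation sphere is the principal ideal generated by its Euler class. The only difference is expository---you spell out the K\"unneth and Gysin arguments where the paper cites \cite{Fadell--Husseini} and \cite{B-Z}.
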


\begin{proof}
(a) Since the $\mathbb{Z}_{2}^{k}$-action on $X$ is component-wise antipodal the index of $X$ is computed in the paper of Fadell and Husseini~\cite[Example~3.3, p.~76]{Fadell--Husseini}.

\noindent (b) This fact is established in Proposition~\ref{Prop:IndexOfStiefel}.

\noindent (c) Let us denote by $R_{ab}$, for $1\leq a<b\leq k$, the $1$%
-dimension real vector subspace of $R_{k}$ described by%
\begin{equation*}
R_{ab}=\left\{ m\in R_{k}~|~m_{ij}=0\text{ for }(i,j)\notin \{(a,b),(b,a)\}%
\text{ and}~m_{ab}=m_{ba}\in \mathbb{R}\right\} .
\end{equation*}%
The subspace $R_{ab}$ is $\mathbb{Z}_{2}^{k}$-invariant and
\begin{equation*}
\varepsilon _{i}\cdot m=\left\{
\begin{array}{r}
-m \\
m%
\end{array}%
\begin{array}{l}
\text{, for }i\in \{a,b\} \\
\text{, for }i\in \{1,\ldots ,k\}\backslash \{a,b\}%
\end{array}%
\right. .
\end{equation*}%
Moreover, $R_{k}\cong \bigoplus\limits_{1\leq a<b\leq k}R_{ab}$ as a $%
\mathbb{Z}_{2}^{k}$-module. Since the Fadell--Husseini index of a sphere in
this case is a principal ideal generated by the Euler class [= the top
Stiefel--Whitney class] of the vector bundle%
\begin{equation*}
R_{k}\longrightarrow \mathrm{E}\mathbb{Z}_{2}^{k}\times _{\mathbb{Z}%
_{2}^{k}}R_{k}\longrightarrow \mathrm{B}\mathbb{Z}_{2}^{k}
\end{equation*}%
and%
\begin{equation*}
\mathfrak{e}(\mathrm{E}\mathbb{Z}_{2}^{k}\times _{\mathbb{Z}%
_{2}^{k}}R_{k})=\prod\limits_{1\leq a<b\leq k}\mathfrak{e}(\mathrm{E}\mathbb{%
Z}_{2}^{k}\times _{\mathbb{Z}_{2}^{k}}R_{ab})=\prod\limits_{1\leq a<b\leq
k}\left( t_{a}+t_{b}\right) .
\end{equation*}%
For details consult \cite[Proof of Proposition 3.11]{B-Z}. It follows
directly that
\begin{equation*}
\mathfrak{e}(\mathrm{E}\mathbb{Z}_{2}^{k}\times _{\mathbb{Z}%
_{2}^{k}}R_{k}^{\oplus m})=\prod\limits_{1\leq a<b\leq k}\left(
t_{a}+t_{b}\right) ^{m}
\end{equation*}%
and consequently $\mathrm{Index}_{\mathbb{Z}_{2}^{k},\mathbb{F}_{2}}~S\left(
R_{k}^{\oplus m}\right) =\langle \prod\limits_{1\leq a<b\leq k}\left(
t_{a}+t_{b}\right) ^{m}\rangle $.

\noindent (d) Follows from the decomposition $U_{k}\cong R_{k}\oplus R_{k}$
of $\mathbb{Z}_{2}^{k}$-module.
\end{proof}

\medskip

Now, the inclusions (\ref{eq:inclusions}) with just determined indexes imply
that:%
\begin{equation*}
\begin{array}{lll}
\prod\limits_{1\leq a<b\leq k}\left( t_{a}+t_{b}\right) ^{m}\in \langle
t_{1},\ldots,t_{k}\rangle , &  & \prod\limits_{1\leq a<b\leq k}\left(
t_{a}+t_{b}\right) ^{m}\in \langle t_{1},\ldots,t_{k}\rangle , \\
\prod\limits_{1\leq a<b\leq k}\left( t_{a}+t_{b}\right) ^{m}\in \langle \bar{%
\omega}_{n-k+1},\ldots,\bar{\omega}_{n}\rangle , &  & \prod\limits_{1\leq
a<b\leq k}\left( t_{a}+t_{b}\right) ^{m}\in \langle \bar{\omega}_{n-k+1},\ldots,%
\bar{\omega}_{n}\rangle .%
\end{array}%
\end{equation*}%
This gives a \textbf{contradiction} with the assumptions of Theorem \ref%
{ram-rattray-st}. Therefore, all claims of Theorem \ref{ram-rattray-st} hold.

\subsection{\label{k2-sec}Proof of Theorem~\protect\ref{ram-2frames-improved}}

Before starting the proof let us once more isolate an important property of Stiefel--Whitney classes already used in the proof of Theorem~\ref{ram-rattray-st}. Let $H$ be a subgroup of a group $G$ and $V$ a real $G$-representation. Then the following equality between the total Stiefel--Whitney classes holds:
\begin{equation*}
w\left( \mathrm{E}H\times _{H}V\right) =\mathrm{res}_{H}^{G}~\left( w\left(
\mathrm{E}G\times _{G}V\right) \right) ~\Longleftrightarrow ~w_{i}\left(
\mathrm{E}H\times _{H}V\right) =\mathrm{res}_{H}^{G}~\left( w_{i}\left(
\mathrm{E}G\times _{G}V\right) \right) \text{ for all }i\geq 1
\end{equation*}
where $V$ inherits the $H$-representation structure from the inclusion map $H\hookrightarrow G$.

\medskip

In the proof we use the complete group of symmetries $W_{2}=(\mathbb{Z}%
_{2})^{2}\rtimes \mathbb{Z}_{2}=\left( \langle \varepsilon _{1}\rangle
\times \langle \varepsilon _{2}\rangle \right) \rtimes \langle \sigma
\rangle $ which is isomorphic to the dihedral group $D_{8}$. The cohomology
of the dihedral group $D_{8}$ with $\mathbb{F}_{2}$ coefficients is given by%
\begin{equation*}
H^{\ast }(D_{8};\mathbb{F}_{2})=\mathbb{F}_{2}[x,y,w]/\langle xy\rangle .
\end{equation*}%
where $\deg x=\deg y=1$ and $\deg w=2$. Consult \cite[Section IV.1, page 116]%
{admi2004} or \cite[Section 4.2]{B-Z}. In what follows we use the notations
introduced in the paper \cite[Section 4.3.2]{B-Z}. For example subgroup $(%
\mathbb{Z}_{2})^{2}$ is denoted by $H_{1}$, while subgroup $\langle \sigma
\rangle $ is either $K_{4}$ or $K_{5}$. Let us assume for clarity that $%
K_{5}=\langle \sigma \rangle $.

\medskip

Let us consider $W_{2}=D_{8}$ and its already introduced representations $%
R_{2}$ and $\mathbb{R}^{2}$. Computation of the total Stiefel--Whitney class
$w\left( \mathrm{E}(\mathbb{Z}_{2})^{2}\times _{(\mathbb{Z}%
_{2})^{2}}R_{2}\right) $ conducted in Section \ref{ram-rattray-proof}, when
translated into the notation of \cite[Section 4.3.2]{B-Z}, gives us that%
\begin{equation*}
w\left( \mathrm{E}H_{1}\times _{H_{1}}R_{2}\right) =1+\left( a+a+b\right)
=1+b
\end{equation*}%
Moreover, since $\mathrm{E}K_{5}\times _{K_{5}}R_{2}$ is a trivial vector
bundle%
\begin{equation*}
w\left( \mathrm{E}K_{5}\times _{K_{5}}R_{2}\right) =1
\end{equation*}%
Thus, the restriction diagram presented in \cite[Section 4.3.2, equations
(26) and (27)]{B-Z} implies that%
\begin{equation}
w\left( \mathrm{E}D_{8}\times _{D_{8}}R_{2}\right) =1+y.  \label{eq:SW-R_2}
\end{equation}%
On the other hand, presented in the new notation%
\begin{equation*}
w\left( \mathrm{E}H_{1}\times _{H_{1}}\mathbb{R}^{2}\right) =\left(
1+a\right) \left( 1+a+b\right) =1+b+a\left( a+b\right) .
\end{equation*}%
The $2$-dimensional real $K_{5}$-representation $\mathbb{R}^{2}$ can be
decomposed into the direct sum $\mathbb{R}^{2}\cong V_{0}\oplus V_{1}$ of
the trivial $1$-dimensional real $K_{5}$-representation $V_{0}$ and the $1$%
-dimensional real $K_{5}$-representation $V_{1}$ where the action of
generator $\sigma \in K_{5}$ is given by $\sigma \cdot v=-v$, for $v\in
V_{1} $. Then the total Stiefel--Whitney class is%
\begin{equation*}
w\left( \mathrm{E}K_{5}\times _{K_{5}}\mathbb{R}^{2}\right) =1+t_{5}.
\end{equation*}%
Again the restriction diagram \cite[Section 4.3.2, equations (26) and (27)]%
{B-Z} implies that%
\begin{equation}
w\left( \mathrm{E}D_{8}\times _{D_{8}}\mathbb{R}^{2}\right) =1+\left(
y+x\right) +w.  \label{eq:SW-R^2}
\end{equation}

\medskip

\begin{proposition}
With notation already introduced:

\begin{compactenum}[\rm(a)]
\item $\mathrm{Index}_{D_{8},\mathbb{F}_{2}}V_{n}^{2}=\langle \bar{w}_{n-1}(%
\mathrm{EO}(2)\times _{D_{8}}\mathbb{R}^{2}),\bar{w}_{n}\left( \mathrm{EO}%
(2)\times _{D_{8}}\mathbb{R}^{2}\right) \rangle\subseteq H^{*}\left(D_8,%
\mathbb{F}_2\right) $ where
\begin{equation*}
\left( 1+\bar{w}_{1}\left( \mathrm{EO}(2)\times _{D_{8}}\mathbb{R}%
^{2}\right) +\bar{w}_{2}\left( \mathrm{EO}(2)\times _{D_{8}}\mathbb{R}%
^{2}\right) +\cdots\right) \left( 1+\left( y+x\right) +w\right) =~1.
\end{equation*}

\item $\mathrm{Index}_{D_{8},\mathbb{F}_{2}}~S\left( R_{2}^{\oplus m}\right)
=\langle y^{m}\rangle .$

\item $~y^{m}\notin \langle \bar{w}_{n-1}(\mathrm{EO}(2)\times _{D_{8}}\mathbb{R%
}^{2}),\bar{w}_{n}\left( \mathrm{EO}(2)\times _{D_{8}}\mathbb{R}^{2}\right)
\rangle ~\Longrightarrow ~(n,m,2)\in \mathcal{R}_{odd,sym}^{orth}.$

\item $y^{m}\notin \langle \bar{w}_{n-1}(\mathrm{EO}(2)\times _{D_{8}}\mathbb{R%
}^{2}),\bar{w}_{n}\left( \mathrm{EO}(2)\times _{D_{8}}\mathbb{R}^{2}\right)
,x\rangle ~\Longrightarrow ~(n,m,2)\in \mathcal{R}_{odd,sym}^{orth}.$

\end{compactenum}
\end{proposition}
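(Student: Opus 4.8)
The plan is to establish the four parts of this Proposition by exploiting the restriction from $D_8$ to its subgroups $H_1 = (\Z_2)^2$ and $K_5 = \langle\sigma\rangle$, using the Stiefel--Whitney class computations \eqref{eq:SW-R_2} and \eqref{eq:SW-R^2} already carried out. For part (a), I would invoke Proposition~\ref{prop:StiefelG} with $G = D_8 \subset \mathrm{O}(2)$: the index is generated by $\bar w_{n-1}$ and $\bar w_n$ of the bundle $\mathrm{EO}(2)\times_{D_8}\R^2$, and since $w\left(\mathrm{EO}(2)\times_{D_8}\R^2\right) = 1 + (y+x) + w$ by \eqref{eq:SW-R^2}, the dual classes are determined by the stated formal inverse identity. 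Part (b) is a direct analogue of part (c) of the earlier Claim: the sphere $S(R_2^{\oplus m})$ has Fadell--Husseini index over $\F_2$ equal to the principal ideal generated by the Euler class of $\mathrm{E}D_8\times_{D_8} R_2^{\oplus m}$, which by \eqref{eq:SW-R_2} is $\mathfrak e\left(\mathrm{E}D_8\times_{D_8} R_2\right)^m = y^m$; I should note that $R_2$ is $1$-dimensional so its Euler class is simply its top (first) Stiefel--Whitney class $y$.

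For part (c), the argument mirrors the proof of Theorem~\ref{ram-rattray-st}, but now using the full symmetry group $W_2 = D_8$ rather than only the diagonal subgroup $\Z_2^2$. If $(n,m,2)\notin\mathcal R_{odd,sym}^{orth}$, then the test map $\tau_{odd,sym}^{orth}\colon V_n^2 \to R_2^{\oplus m}$ misses $\mathbf 0$, yielding a $W_2$-equivariant map $V_n^2 \to S(R_2^{\oplus m})$; by the monotonicity of the Fadell--Husseini index this forces $\mathrm{Index}_{D_8,\F_2} S(R_2^{\oplus m}) \subseteq \mathrm{Index}_{D_8,\F_2} V_n^2$, i.e.\ $y^m \in \langle \bar w_{n-1}, \bar w_n\rangle$ with the bundle notation of part (a), contradicting the hypothesis. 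Part (d) is a refinement: the key observation is that the $W_2$-action on $S(R_2^{\oplus m})$ restricted to the subgroup generated by $\sigma$ (the transposition) has a fixed point only on a subspace of $R_2$ that is killed, so one can work with a slightly larger obstruction ideal. Concretely, I would argue that the map actually lifts through a subspace whose index contains $x$ in addition to $\bar w_{n-1}, \bar w_n$; this comes from the fact that $R_2 = R_{12}$ is the sign representation under the swap $\sigma$ and one can detect an extra class $x$ via the restriction to the appropriate $\Z_2$-subgroup corresponding to $\sigma$ composed with $\epsilon_1\epsilon_2$.

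The step I expect to be the genuine obstacle is pinning down part (d) rigorously --- specifically, justifying why the ideal can be enlarged by $x$. The subtlety is that this is not a formal consequence of monotonicity for the ambient $D_8$-action alone; rather, one needs to observe that the relevant test map respects an additional constraint. One clean way to see it: the element $\mathbf 0 \in R_2^{\oplus m}$ being avoided means the composite $V_n^2 \to S(R_2^{\oplus m})$ is $D_8$-equivariant, but since $R_2$ is the nontrivial representation of $K_5 = \langle\sigma\rangle$ (where $\sigma$ is the transposition), the $K_5$-fixed subspace of $R_2^{\oplus m}$ is zero, so the map is $K_5$-free; combined with the known restriction $\mathrm{res}^{D_8}_{K_5}$ sending the polynomial generator of $H^*(K_5)$ to a class that together with the image of $x$ spans the right subalgebra, one extracts that $y^m$ lies in the larger ideal $\langle \bar w_{n-1}, \bar w_n, x\rangle$ only if the original index already contains it. I would verify this by tracing through the restriction diagram of \cite[Section 4.3.2]{B-Z}, checking that $x$ restricts nontrivially to the subgroup where the action has no fixed points, which is exactly the mechanism allowing the stronger conclusion while keeping the same target $\mathcal R_{odd,sym}^{orth}$.
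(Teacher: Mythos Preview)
Your treatment of parts (a), (b), and (c) is correct and matches the paper: (a) is Proposition~\ref{prop:StiefelG} specialized to $G=D_8$ combined with \eqref{eq:SW-R^2}; (b) is the Euler class computation from \eqref{eq:SW-R_2}; and (c) is the Fadell--Husseini monotonicity argument with the $D_8$-equivariant test map.

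Your argument for (d), however, is based on a misreading of the logical structure. Part (d) is \emph{not} a refinement or strengthening of (c) requiring extra input; it is an immediate corollary. The ideal $\langle \bar w_{n-1}, \bar w_n, x\rangle$ contains $\langle \bar w_{n-1}, \bar w_n\rangle$, so the hypothesis ``$y^m \notin \langle \bar w_{n-1}, \bar w_n, x\rangle$'' is \emph{stronger} than the hypothesis of (c), and hence (d) follows from (c) by pure logic. The point of stating (d) is not that it gives a better result, but that its hypothesis is easier to \emph{verify}: modding out by $x$ in $H^*(D_8;\mathbb F_2)=\mathbb F_2[x,y,w]/\langle xy\rangle$ lands you in the polynomial ring $\mathbb F_2[y,w]$, which is exactly $H^*(G^2(\mathbb R^n);\mathbb F_2)$, and this is what connects the problem to the height of $w_1$ in Grassmannian cohomology.

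Your attempted justification for (d) also contains a factual error: you assert that $R_2$ is the nontrivial representation of $K_5=\langle\sigma\rangle$, but in fact $\sigma$ (the transposition) acts trivially on the one-dimensional space $R_2$ of symmetric off-diagonal matrices --- this is precisely why $w(\mathrm EK_5\times_{K_5} R_2)=1$ and why \eqref{eq:SW-R_2} gives $w=1+y$ with no $x$-term. So the elaborate mechanism you sketch (lifting through a subspace, $K_5$-freeness, tracing restriction diagrams) is both unnecessary and resting on a false premise. Replace all of it with the one-line observation about ideal containment.
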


\begin{proof}
(a) Proposition \ref{prop:StiefelG} together with the evaluated total
Stiefel--Whitney class (\ref{eq:SW-R^2}) imply the claim.

\noindent (b) From (\ref{eq:SW-R_2}) it follows that $\mathfrak{e}(\mathrm{E}D_{8}\times _{D_{8}}R_{2})=y$ and consequently $\mathfrak{e}(\mathrm{E}D_{8}\times _{D_{8}}R_{2}^{\oplus m})=y^{m}$. Since the Fadell--Husseini index of a sphere in this case is a principal ideal generated by the Euler class~\cite[Proof of Proposition 3.11]{B-Z} the claim is proved.

\noindent (c) This is a direct consequence of the configuration test map
construction presented at the beginning of Section \ref{ram-rattray-proof}.

\noindent (d) If $y^{m}$ is not an element of the bigger ideal
\begin{equation*}
\langle \bar{w}_{n-1}(\mathrm{EO}(2)\times _{D_{8}}\mathbb{R}^{2}),\bar{w}%
_{n}\left( \mathrm{EO}(2)\times _{D_{8}}\mathbb{R}^{2}\right) ,x\rangle
\end{equation*}
it certainly can not belong to the smaller ideal
\begin{equation*}
\langle \bar{w}_{n-1}(\mathrm{EO}(2)\times _{D_{8}}\mathbb{R}^{2}),\bar{w}%
_{n}\left( \mathrm{EO}(2)\times _{D_{8}}\mathbb{R}^{2}\right) \rangle .
\end{equation*}
The statement follows from (c).
\end{proof}

\medskip

Hence, the final effort is to determine a condition on the integer $m$ such
that
\begin{equation*}
y^{m}\notin \langle \bar{w}_{n-1}(\mathrm{EO}(2)\times _{D_{8}}\mathbb{R}%
^{2}),\bar{w}_{n}\left( \mathrm{EO}(2)\times _{D_{8}}\mathbb{R}^{2}\right)
,x\rangle
\end{equation*}%
or%
\begin{equation*}
0\neq y^{m}\in \mathbb{F}_{2}[y,w]/\langle \bar{w}_{n-1},\bar{w}_{n}\rangle
\end{equation*}%
where $\left( 1+y+w\right) \left( 1+\bar{w}_{1}+\bar{w}_{2}+\cdots\right) =1$.

\medskip

If $y$ and $w$ are interpreted as the first and the second Stiefel--Whitney
class in the cohomology of the Grassmannian $G^2\left(\mathbb R^n\right)$ we can identify $%
\mathbb{F}_{2}[y,w]/\langle \bar{w}_{n-1},\bar{w}_{n}\rangle $ with $H^{\ast
}\left( G^2\left(\mathbb R^n\right);\mathbb{F}_{2}\right) $. Then our final step coincides with
the well known problem of \emph{determining the height (maximal nonzero
power) of the first Stiefel--Whitney class in the cohomology of the
Grassmannian} $G^2\left(\mathbb R^n\right)$. In~\cite[Proposition~2.6, page~525]{hil1980A} the
following statement is proved:

\begin{compactenum}[$~~$]
\item \textbf{Lemma.} Let $n\geq 2$ , and let $P(n):=2^{s}$ be the minimal power of two,
satisfying $2^{s}\geq n$. For the first Stiefel-Whitney class $w_{1}$ of the
Grassmannian $G^2(\mathbb R^n)$ holds%
\begin{equation*}
\begin{array}{ccc}
w_{1}^{2^{s}-2}\not=0 & \text{and} & w_{1}^{2^{s}-1}=0.%
\end{array}%
\end{equation*}
\end{compactenum}

\noindent Therefore,
\begin{equation*}
P(n)\geq m+2~\Longleftrightarrow ~n\geq \tfrac{1}{2}P(m+2)+1~\Longrightarrow
~(n,m,2)\in \mathcal{R}_{odd,sym}^{orth}.
\end{equation*}

\subsection{Proof of Theorem~\protect\ref{ram-2frames-proj-emb}}
\label{ram-2frames-proj-emb-proof}

Consider the Stiefel manifold $V_n^2$ with $D_8$ action on it. 
We want to know whether $V_n^2$ can be mapped $D_8$-equivariantly to $(R_2)^m\setminus \{\mathbf{0}\}$.

Denote by $\sigma_1,\sigma_2, \tau$ the generators of $D_8$, where $\sigma_1$ and $\sigma_2$ reflect the base vectors in $\mathbb R^2$, and $\tau$ transposes the base vectors. 
$R_2$ is the one-dimensional real $D_8$-representation on which $\sigma_1$ and $\sigma_2$ act antipodaly, and $\tau$ acts trivially.

Now consider an automorphism of $D_8$, defined by
\begin{eqnarray*}
\sigma'_1 &=& \sigma_1\sigma_2\tau\\
\sigma'_2 &=& \tau\\
\tau' &=& \sigma_1.
\end{eqnarray*}

Under this automorphism the representation of $D_8$ on $\mathbb R^2$ remains the same (it is sufficient to change the base $e'_1 = e_1 + e_2, e'_2=-e_1+e_2$). 
The representation $R_2$ is now given by trivial action of $\sigma'_1$ and $\sigma'_2$ and by antipodal action of $\tau'$. 
Thus, we pass to the space $X_n=V_n^2/(\sigma'_1, \sigma'_2)$ of all ordered pairs of orthogonal lines through the origin in $\mathbb R^n$. 
This space has the action of $\Z_2=(\tau')$ which permutes the lines.
We want to know whether $X$ can be mapped $\mathbb Z_2$-equivariantly to $\gamma^m\setminus\{\mathbf{0}\}$, where $\gamma$ is the unique non-trivial one-dimensional representation of $\mathbb Z_2$. 
It is well known that $X$ is homotopy equivalent to the deleted square of the projective space $\mathbb RP^{n-1}$, i.e.,
$$
X \simeq \left(\mathbb RP^{n-1}\times\mathbb RP^{n-1}\right)\setminus\Delta(\mathbb
RP^{n-1}).
$$
The existence of a $\mathbb Z_2$-equivariant map $X\to S(\gamma^m)$ is exactly the ``deleted square obstruction'' for the embedding of $\mathbb RP^{n-1}$ to $\mathbb R^m$.

The idea of considering the same automorphism of $D_8$ was used by Gonz\'{a}lez and Landweber in \cite{gonz-land2008}, where the deleted square obstruction is related to another problem of finding the symmetric topological complexity of the projective space.

\subsection{Proof of Theorem~\protect\ref{ram-3frames}}

We consider the group $G:=W_3^{(2)} = D_8\times \mathbb Z_2$. We already know that
\[
H^*(D_8, \mathbb F_2) = \mathbb F_2[x,y,w]/\langle xy\rangle,\quad H^*(\mathbb Z_2,\mathbb F_2) = \mathbb F_2[t],
\]
and therefore $H^*(G, \mathbb F_2) = \mathbb F_2[x,y,w,t]/\langle xy\rangle$ by the K\"unneth formula. The Stiefel--Whitney class of the standard $G$-representation on $\mathbb R^3$ is 
$$
w(\mathbb R^3) = (1 + x + y + w)(1+t),
$$
and the Euler class of the representation $R_3$ is 
$$
\mathfrak e(R_3) = y(t^2 + t(x+y) + w),
$$
because $\mathbb R^3(G) = \mathbb R^2(D_8)\oplus\mathbb R^1(\mathbb Z_2)$ and $R_3(G) = R_2(D_8)\oplus \mathbb R^2(D_8)\otimes \mathbb R^1(\mathbb Z_2)$ in the obvious notation. The rest of the proof proceeds in the footsteps of the proof of Theorem~\ref{ram-rattray-st}.

\subsection{Proof of Theorem~\protect\ref{ram-rattray-unit}}

\label{p-group-sec}Before proving Theorem~\ref{ram-rattray-unit} we recall some basic facts and results on the following Borsuk--Ulam type problem (consult the book~\cite{bart1993}).

\begin{compactenum}[$~~$]
\item \textbf{Problem.} Let $G$ be a finite group and $V$ its real representation such that $%
V^{G}=\{0\}$. Determine the conditions for the vector bundle
\begin{equation*}
EG\times V\rightarrow EG
\end{equation*}%
to have a $G$-equivariant nonzero section.
\end{compactenum}

\medskip

\noindent The following result for $p$-groups will be used, consult~\cite{bcp1991,bart1992,bart1993,Clap-Mar}.

\begin{compactenum}[$~~$]
\item \textbf{Lemma.} Let $G$ be a $p$-group and $V$ its real representation such that $V^{G}=\{\mathbf{0}\}$. Then the image of an equivariant map $f : EG\to V$ intersects $V^G={\mathbf{0}}$. Moreover, there exists an integer $n(G, V)$ such that for every free $G$-space $X$ is $(n-1)$-connected where
$n\ge n(G, V)$, the image of an equivariant map $f : X\to V$ meets $V^G={\mathbf{0}}$.
\end{compactenum}

\medskip

\noindent In order to prove Theorem~\ref{ram-rattray-unit} we slightly change the configuration test map construction given at the beginning of this chapter. Let us fix positive integers $k$ and $m$, and consider a collection of $m$ odd functions $f_{1},\ldots,f_{m}$. The test map in this case is the $W_{k}$-equivariant map $\upsilon :Y\rightarrow R_{k}^{\oplus m}\oplus I_{k}^{\oplus m}$ defined by
\begin{equation*}
\begin{array}{ccc}
\left( e_{1},\ldots ,e_{k}\right) & \overset{\upsilon }{\longmapsto } &
\left( \mu _{f_{r}}\left( e_{1},\ldots ,e_{k}\right) \right)
_{r=1}^{m}\oplus \left( \eta _{f_{r}}\left( e_{1},\ldots ,e_{k}\right)
\right) _{r=1}^{m}%
\end{array}%
\end{equation*}%
where $Y$ stands for the Stiefel manifold $V_{n}^{k}$ as before. If there exists a positive integer $n=n(k,m)$ such that there is no $W_{k}$-equivariant map
\begin{equation*}
Y\rightarrow \left( R_{k}^{\oplus m}\oplus I_{k}^{\oplus m}\right)
\backslash \left\{ \mathbf{0}\right\} \rightarrow S\left( R_{k}^{\oplus
m}\oplus I_{k}^{\oplus m}\right)
\end{equation*}%
then Theorem \ref{ram-rattray-unit} is proved.

\smallskip

Without loss of generality we may increase $n$ and $k$ in such a way that $k$ becomes power of $2$. This can be done since we do not need an optimal $n$ and moreover proving the theorem for bigger $k$ and fixed $n$ and $m$ yields the same result for smaller $k$. Now consider the $2$-Sylow subgroup $W_{k}^{(2)}$ of $W_{k}$. Since the $W_{k}^{(2)}$-fixed point set of the representation $R_{k}^{\oplus m}\oplus I_{k}^{\oplus m}$ is trivial, i.e., $\left( R_{k}^{\oplus m}\oplus I_{k}^{\oplus m}\right) ^{W_{k}^{(2)}}=\{\mathbf{0\}}$ the previously presented lemma implies that every map $Y\rightarrow R_{k}^{\oplus m}\oplus I_{k}^{\oplus m}$ must meet origin. Thus there cannot be any $W_{k}^{(2)}$-equivariant (and consequently $W_{k}$-equivariant) map $Y\rightarrow S\left( R_{k}^{\oplus m}\oplus I_{k}^{\oplus m}\right) $. This completes the proof of the theorem.

\subsection{Proof of Theorem \protect\ref{rattray-gen}}

Let $\lambda_1,\ldots ,\lambda_{n-k}$ be independent linear forms defining the subspace $L$ in $\mathbb{R}^{m}$. 
In this proof we take $\mathbb{R}^{k}$ to be an $\mathrm{O}(k)$-representation where the action is given by the left matrix multiplication. 
The inclusion $W_{k}\subseteq \mathrm{O}(k)$ gives to $\mathbb{R}^{k}$ also the structure of a $W_{k}$-representation.
Let us denote this $W_k$-representation by $P_k$. 
Consider the following $W_{k}$-equivariant maps
\begin{compactitem}

\item $\phi _{0}:V_{n}^{k}\rightarrow R_{k}$ given by
\begin{equation*}
\phi_0(e_1,\ldots,e_k) =(\psi(e_i),\psi(e_j))_{1\le i<j\le k},
\end{equation*}

\item $\phi _{r}:V_{n}^{k}\rightarrow P_k$, for $1\leq r\leq n-k$, given by
\begin{equation*}
\phi_r(e_1,\ldots,e_k)=\left(\lambda_r(\psi(e_1)),\ldots, \lambda_r(\psi(e_k))\right)
\end{equation*}%
for $1\leq i\leq k$.

\end{compactitem}

\noindent The sum of these maps, the $W_{k}$-equivariant map, $\phi =\phi
_{0}\oplus \phi _{1}\oplus \cdots\oplus \phi _{n-k}:V_{n}^{k}\rightarrow
R_{k}\oplus \left( P_{k}\right) ^{n-k}$ has the property that if the image
of $\phi $ meets zero in $R_{k}\oplus P_{k}^{n-k}$ then the theorem follows.
It is sufficient to show that the Euler class
\begin{equation*}
\mathfrak{e}(R_{k}\oplus P_{k}^{n-k})\in H^{\ast }(\mathrm{B}W_{k};\mathbb{F}%
_{2})
\end{equation*}%
has nonzero image in $H_{W_{k}}^{\ast }(V_{n}^{k};\mathbb{F}_{2})$, i.e.,
\begin{equation*}
\mathfrak{e}(R_{k}\oplus P_{k}^{n-k})\notin \mathrm{Index}_{\mathbb{W}_{k},%
\mathbb{F}_{2}}V_{n}^{k}.
\end{equation*}

\smallskip

Let us prove non-vanishing of the Euler class by counting zeroes of a generic map. We construct another $W_{k}$-equivariant map:

\begin{equation*}
\tau :V_{n}^{k}\rightarrow R_{k}\oplus P_{k}^{n-k}
\end{equation*}%
with the unique (up to $W_{k}$-action) non-degenerated zero. This will imply
that $\mathfrak{e}(R_{k}\oplus P_{k}^{n-k})\neq 0$ as an element of $%
H_{W_{k}}^{\ast }(V_{n}^{k};\mathbb{F}_{2})$.

\noindent Let $M=\mathbb{R}^{k}\subseteq \mathbb{R}^{n}$ be a standard
inclusion, and let $f(x,y)$ be a symmetric quadratic form, such that $%
f|_{M\times M}$ is generic. Put

\begin{equation*}
\tau _{0}(e_{1},\ldots ,e_{k})=(f(e_{i},e_{j}))_{1\leq i<j\leq k},
\end{equation*}%
and for $1\leq r\leq n-k$
\begin{equation*}
\tau _{r}(e_{1},\ldots ,e_{k})=(x_{k+r}(e_{1}),\ldots ,x_{k+r}(e_{k})),
\end{equation*}%
where $x_{k+r}$ are coordinate functions in $\mathbb{R}^{n}$. Then a unique
(up to $W_{k}$-action) basis in $M$ is mapped by $\tau $ to zero; because
the conditions $\tau_r(e_1, \ldots, e_k) = 0$ (for $1\le r\le n-k$) imply $%
e_1, \ldots, e_k\in M$ and condition $\tau_0(e_1, \ldots, e_k)=0$ implies
that $f|_{M\times M}$ is diagonal in the basis $(e_1, \ldots, e_k)$ of $M$.
This zero is non-degenerate, because the image of the differential $d\tau$
at $(e_1, \ldots, e_k)$

\begin{compactitem}
\item contains $R_k$, similar to the proof of the Rattray theorem;
\item surjects onto $P_k^{n-k}$, because in the first order approximation
the frame $(e_1+\delta_1, \ldots, e_k+\delta_k)$ is orthonormal for any $\delta_1,\ldots,\delta_k\in M^\perp$.
\end{compactitem}

\noindent Thus $0\neq \mathfrak{e}(R_{k}\oplus P_{k}^{n-k})\in
H_{W_{k}}^{\ast }(V_{n}^{k};\mathbb{F}_{2})$ and the proof is complete.

\section{Proof of Makeev type results}

\subsection{Proof of Theorem~\protect\ref{Makeev-general}}

Makeev type results will be considered via the classical configuration space~/~test map scheme used for mass partition problems by hyperplanes, consult~\cite{zvm2006} or~\cite{B-Z} for more details. We consider two different configuration spaces depending whether we require configuration of orthogonal hyperplanes or not.

\smallskip

Let $\mathbb{R}^{n}$ be embedded in $\mathbb{R}^{n+1}$ by $(x_{1},\ldots,x_{n})\longmapsto (x_{1},\ldots,x_{n},1)$. Every oriented affine hyperplane $H$ in $\mathbb{R}^{n}$ determines a unique oriented hyperplane $H^{\prime }$ through the origin in $\mathbb{R}^{n+1}$ by $H^{\prime }\cap\mathbb{R}^{n}=H$. Converse is also true if the hyperplane $x_{n+1}=0$ is excluded. Any oriented hyperplane $H$ in $\mathbb{R}^{n+1}$ passing through the origin is uniquely determined by the unit vector $v\in S^{d}$ pointing
inside the halfspace $H^{+}$. Such a hyperplane we denote also by $H_{v}$. Notice that $H_{-v}^{-}=H_{v}^{+}$. Thus, the space of all oriented affine hyperplanes in $\mathbb{R}^{n}$ (including two hyperplanes at ``infinity'') can be considered to be the sphere $S^{n}$. 
The first configuration space we consider is
\begin{equation*}
\begin{array}{lllll}
X & = & \left( S^{n}\right) ^{k} & = & \text{the space of all collections of
}k\text{ oriented affine hyperplanes in }\mathbb{R}^{n}\text{.}%
\end{array}%
\end{equation*}

\smallskip

\noindent Let $\mu$ be an absolutely continuous probabilistic measure on $\mathbb{R}^{n}$ with connected support. Then the second configuration space $Y_{\mu }=V_{n}^{k}$ is shaped by $\mu$ in the following way: every orthonormal $k$-frame $\left(e_{1},\ldots,e_{k}\right) \in V_{n}^{k}$ determines a unique collection of $k$ oriented affine hyperplanes $\left( H_{1},\ldots,H_{k}\right) $ in $\mathbb{R}^n$ with the property that $e_{i}\perp H_{i}$ and $\mu \left(H_{i}^{+}\right) =\mu \left( H_{i}^{-}\right) $ for all $1\leq i\leq k$. This is because for every given direction $e_i$ there is a unique hyperplane orthogonal to $e_i$ that partitions $\mu$ into equal halves. In case $\mu$ has disconnected support, we may approximate $\mu$ by a sequence of measures with connected support, prove the theorem in this case, and then go to the limit  using the compactness of the following space: for a given $0<\varepsilon<1$ consider the space of hyperplanes $H$ that partition $\mu$ into parts $H^+, H^-$ with difference $|\mu(H^+) - \mu(H^-)|\le\epsilon$. 

\smallskip

\noindent The group $W_{k}=(\mathbb{Z}_{2})^{k}\rtimes \Sigma _{k}\subset\mathrm{O}(k)$ acts on both configuration spaces $X$ and $Y$ in the same way as in Section~\ref{Section:Proof of Rattray}.

\smallskip

Before defining the test maps let us introduce a particular $W_{k}$ and $(\mathbb{Z}_{2})^{k}$-representation on the vector space $\mathbb{R}^{2^{k}}$and study its structure. If we assume that the coordinate functions $x_{\left( a_{1},\ldots ,a_{k}\right) }$ on $\mathbb{R}^{2^{k}}$ are indexed by the elements $\left( a_{1},\ldots ,a_{k}\right) $ of the group $(\mathbb{Z}_{2})^{k}$, then the $W_{k}$-action we consider is given by
\begin{equation*}
\left( \left( b_{1},\ldots,b_{k}\right) \rtimes \pi \right) \cdot x_{\left(
a_{1},\ldots ,a_{k}\right) }=x_{\left( b_{1}a_{\pi ^{-1}\left( 1\right)
},\ldots ,b_{k}a_{\pi ^{-1}(k)}\right) }
\end{equation*}%
where $\left( b_{1},\ldots,b_{k}\right) \in (\mathbb{Z}_{2})^{k}$ and $\pi \in
\Sigma _{k}$. The inclusion $(\mathbb{Z}_{2})^{k}\subset W_{k}$ induces also
the structure of $(\mathbb{Z}_{2})^{k}$-representation on $\mathbb{R}%
^{2^{k}} $.

\smallskip

All real irreducible representations of the group $(\mathbb{Z}_{2})^{k} $ are all $1$-dimensional. 
They are completely determined by characters $\chi :(\mathbb{Z}_{2})^{k}\rightarrow \mathbb{Z}_{2}$. 
For $(a_{1},\ldots,a_{k})\in (\mathbb{Z}_{2})^{k}=\{+1,-1\}^{2^{k}}$, let 
\[
V_{a_{1}\ldots a_{k}}=\mathrm{span}\{v_{a_{1},\ldots ,a_{k}}\}\subset\mathbb{R}^{2^{k}}
\]
denotes the $1$-dimensional representation given by
\begin{equation*}
\varepsilon _{i}\cdot v_{a_{1}\ldots a_{k}}=a_{i}~v_{a_{1}\ldots a_{k}}.
\end{equation*}%
Then there is a decomposition of the real $(\mathbb{Z}_{2})^{k}$-representation
\begin{equation*}
\mathbb{R}^{2^{k}}\cong \sum\limits_{a_{1},\ldots ,a_{k}\in (\mathbb{Z}%
_{2})^{k}}V_{a_{1}\ldots a_{k}}\cong V_{+\cdots +}\oplus
\sum\limits_{a_{1},\ldots ,a_{k}\in (\mathbb{Z}_{2})^{k}\backslash
\{+\cdots +\}}V_{a_{1},\ldots ,a_{k}}.
\end{equation*}%
Observe that $V_{+\cdots +}$ is the trivial $1$-dimensional real $(\mathbb{Z}_{2})^{k}$-representation. 
In order to simplify further notation let us define for $1\leq i\leq j\leq k$ the following $(\mathbb{Z}_{2})^{k}$-representation
\begin{equation*}
S_{ij}=\sum\limits_{\substack{ a_{1},\ldots ,a_{k}\in (\mathbb{Z}%
_{2})^{k}\backslash \{+\cdots+\}  \\ i\leq s(a_{1},\ldots ,a_{k})\leq j}}%
V_{a_{1}\ldots a_{k}}
\end{equation*}%
where $s(a_{1},\ldots ,a_{k})$ denotes the number of $-1$ in the sequence $(a_{1},\ldots ,a_{k})$.

\smallskip

Let $\mu _{1},\ldots ,\mu _{m}$ be a collection of $m$ absolutely continuous
probabilistic measures on $\mathbb{R}^{n}$. The test maps we consider
\begin{equation*}
\begin{array}{ccc}
\tau :X\rightarrow S_{1l}^{\oplus m} & \text{and} & \tau ^{orth}:Y_{\mu
_{1}}\rightarrow S_{1l}^{\oplus m}%
\end{array}%
\end{equation*}%
are defined by%
\begin{equation*}
\begin{array}{ccc}
\left( v_{1},\ldots ,v_{k}\right) & \overset{\tau }{\longmapsto } & \left(
\left( \mu _{i}(H_{v_{1}}^{a_{1}}\cap\cdots\cap H_{v_{k}}^{a_{k}})-\tfrac{1}{%
2^{k}}\mu _{i}(\mathbb{R}^{d})\right) _{(a_{1},\ldots ,a_{k})\in (\mathbb{Z}%
_{2})^{k}}\right) _{i\in \{1,\ldots ,m\}} \\
\left( e_{1},\ldots ,e_{k}\right) & \overset{\tau ^{orth}}{\longmapsto } &
\left( \left( \mu _{i}(H_{e_{1}}^{a_{1}}\cap\cdots\cap H_{e_{k}}^{a_{k}})-%
\tfrac{1}{2^{k}}\mu _{i}(\mathbb{R}^{d})\right) _{(a_{1},\ldots ,a_{k})\in (
\mathbb{Z}_{2})^{k}}\right) _{i\in \{1,\ldots ,m\}}%
\end{array}%
\end{equation*}%
for $\left( v_{1},\ldots,v_{k}\right) \in X$ and $\left( e_{1},\ldots ,e_{k}\right)
\in Y_{\mu _{1}}$. Since the configuration space $Y_{\mu_{1}}$ is chosen in
such a way that each hyperplane equipartitions the measure $\mu_{1}$ the test
map $\tau ^{orth}$ factors
\begin{equation*}
Y_{\mu _{1}}\overset{\rho }{\longrightarrow }S_{2l}\oplus S_{1l}^{\oplus
\left( m-1\right) }\overset{\iota }{\longrightarrow }S_{1l}^{\oplus m}
\end{equation*}%
so that $\tau ^{orth}=\iota \circ \rho $ and $\iota $ is induced by the
inclusion $S_{2l}\rightarrow S_{1l}$.

\smallskip

\noindent All test maps $\tau $, $\tau ^{orth}$ and $\rho $ are $W_{k}$-equivariant maps, when the introduced actions on the spaces are assumed. The key property of these test maps is that: If for every collection $\mu_{1},\ldots ,\mu _{m}$ of $m$ absolutely continuous probabilistic measures on $\mathbb{R}^{n}$

\begin{compactitem}
\item if $\left\{ \mathbf{0}\in S_{1l}^{\oplus m}\right\} \in \tau \left(
X\right) $, then $\left( n,m,k,l\right) \in \mathcal{M}$,

\item if $\left\{ \mathbf{0}\in S_{2l}\oplus S_{1l}^{\oplus \left( m-1\right)
}\right\} \in \rho \left( Y_{\mu _{1}}\right) $, then $\left( n,m,k,l\right)
\in \mathcal{M}^{orth}$.
\end{compactitem}

\noindent Using the contraposition we get that

\begin{compactitem}[$~$]
\item
\begin{tabular}{llll}
$\bullet $ & $\left( n,m,k,l\right) \notin \mathcal{M}$ & $\Longrightarrow $
& there exists a collection of $m$ absolutely continuous probabilistic  \\
&  &  & measures on $\R^{n}$ such that $\left\{ \mathbf{0}\in S_{1l}^{\oplus m}\right\} \notin
\tau \left( X\right) $ \\
&  & $\Longrightarrow $ & there exists a $W_{k}$-equivariant map \\
&  &  & $X=(S^n)^k\rightarrow S_{1l}^{\oplus m}\backslash \left\{ \mathbf{0}\right\}
\rightarrow S\left( S_{1l}^{\oplus m}\right) ,$%
\end{tabular}

\item
\begin{tabular}{llll}
$\bullet $ & $\left( n,m,k,l\right) \in \mathcal{M}^{orth}$ & $%
\Longrightarrow $ & there exists a collection of $m$ absolutely continuous
probabilistic \\
&  &  & measures on $\R^{n}$  such that $\left\{ \mathbf{0}\in S_{2l}\oplus S_{1l}^{\oplus \left(
m-1\right) }\right\} \notin \rho \left( Y_{\mu _{1}}\right) $ \\
&  & $\Longrightarrow $ & there exists a $W_{k}$-equivariant map \\
&  &  & $Y_{\mu_1}=V_n^{k}\rightarrow S_{2l}\oplus S_{1l}^{\oplus \left( m-1\right)
}\backslash \left\{ \mathbf{0}\right\} \rightarrow S\left( S_{2l}\oplus
S_{1l}^{\oplus \left( m-1\right) }\right) .$%
\end{tabular}
\end{compactitem}

\noindent This implies that

\begin{compactitem}

\item if there is no $W_{k}$-equivariant map $X=(S^n)^k\rightarrow S\left( S_{1l}^{\oplus m}\right)$,
then $\left( n,m,k,l\right) \in \mathcal{M}$,

\item if there is no $W_{k}$-equivariant map $Y_{\mu_1}=V_n^{k}\rightarrow S\left( S_{2l}\oplus
S_{1l}^{\oplus \left( m-1\right) }\right)$, then $\left( n,m,k,l\right) \in \mathcal{M}^{orth}.$
\end{compactitem}

\noindent Therefore, by proving the following statement we conclude the proof of Theorem \ref{Makeev-general}.

\begin{proposition}
~~
\begin{compactenum}[\rm(a)]

\item If
$$
\prod_{\substack{ s_{1},\ldots ,s_{k}\in \mathbb{Z}_{2} \\ 1\leq
s_{1}+\ldots +s_{k}\leq l}}(s_{1}t_{1}+s_{2}t_{2}+\dots
+s_{k}t_{k})^{m}\notin \langle t_{1}^{n+1},\ldots ,t_{k}^{n+1}\rangle
$$
then there is no $W_{k}$-equivariant map $X=(S^n)^k\rightarrow S\left( S_{1l}^{\oplus m}\right)$,

\item If
$$
\frac{1}{t_{1}\cdots t_{k}}\prod_{\substack{ s_{1},\ldots ,s_{k}\in \mathbb{%
Z}_{2} \\ 1\leq s_{1}+\ldots +s_{k}\leq l}}(s_{1}t_{1}+s_{2}t_{2}+\dots
+s_{k}t_{k})^{m}\notin \langle \bar{w}_{n-k+1},\ldots ,\bar{w}_{n}\rangle
$$
then there is no $W_{k}$-equivariant map $Y_{\mu_1}=V_n^{k}\rightarrow S\left( S_{2l}\oplus
S_{1l}^{\oplus \left( m-1\right) }\right)$.
\end{compactenum}
\end{proposition}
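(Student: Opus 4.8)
The plan is to argue by contradiction, restricting the hypothetical $W_k$-equivariant maps to the diagonal subgroup $\mathbb{Z}_2^k\subset W_k$ and applying the monotonicity of the Fadell--Husseini index: a $\mathbb{Z}_2^k$-equivariant map $A\to B$ forces $\mathrm{Index}_{\mathbb{Z}_2^k,\mathbb{F}_2}A\supseteq\mathrm{Index}_{\mathbb{Z}_2^k,\mathbb{F}_2}B$. Thus in case (a) a $W_k$-equivariant, hence $\mathbb{Z}_2^k$-equivariant, map $X=(S^n)^k\to S(S_{1l}^{\oplus m})$ would give $\mathrm{Index}_{\mathbb{Z}_2^k,\mathbb{F}_2}S(S_{1l}^{\oplus m})\subseteq\mathrm{Index}_{\mathbb{Z}_2^k,\mathbb{F}_2}(S^n)^k$, and in case (b) a map $Y_{\mu_1}=V_n^k\to S(S_{2l}\oplus S_{1l}^{\oplus(m-1)})$ would give $\mathrm{Index}_{\mathbb{Z}_2^k,\mathbb{F}_2}S(S_{2l}\oplus S_{1l}^{\oplus(m-1)})\subseteq\mathrm{Index}_{\mathbb{Z}_2^k,\mathbb{F}_2}V_n^k$. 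It then suffices to show that each of these inclusions contradicts the respective hypothesis.

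Next I would identify all four indexes. For the source spaces, $\mathrm{Index}_{\mathbb{Z}_2^k,\mathbb{F}_2}(S^n)^k=\langle t_1^{n+1},\ldots,t_k^{n+1}\rangle$ by the component-wise antipodal computation of Fadell and Husseini, and $\mathrm{Index}_{\mathbb{Z}_2^k,\mathbb{F}_2}V_n^k=\langle\bar\omega_{n-k+1},\ldots,\bar\omega_n\rangle$ by Proposition~\ref{Prop:IndexOfStiefel}, where $\bar\omega_i=\bar w_i$ in the notation of the statement. For the sphere targets, the key fact is that for a real $\mathbb{Z}_2^k$-representation $V$ with no trivial summand the Euler class $\mathfrak{e}(E\mathbb{Z}_2^k\times_{\mathbb{Z}_2^k}V)\in\mathbb{F}_2[t_1,\ldots,t_k]$ is nonzero, hence a non-zero-divisor; the Gysin sequence of the sphere bundle $S(V)\to E\mathbb{Z}_2^k\times_{\mathbb{Z}_2^k}S(V)\to B\mathbb{Z}_2^k$ then shows $\mathrm{Index}_{\mathbb{Z}_2^k,\mathbb{F}_2}S(V)=\langle\mathfrak{e}(V)\rangle$, the full principal ideal generated by the Euler class, which here equals the product of the Euler classes of the one-dimensional summands of $V$.

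It remains to compute these Euler classes. Decomposing $S_{1l}=\bigoplus V_{a_1\ldots a_k}$ over the tuples $(a_1,\ldots,a_k)$ with $1\le s(a_1,\ldots,a_k)\le l$ and using $\mathfrak{e}(V_{a_1\ldots a_k})=\sum_{i:\,a_i=-1}t_i$, I would rewrite $\mathfrak{e}(S_{1l})$, by putting $s_i=1$ if $a_i=-1$ and $s_i=0$ otherwise, as $\prod_{1\le s_1+\cdots+s_k\le l}(s_1t_1+\cdots+s_kt_k)$, so that $\mathfrak{e}(S_{1l}^{\oplus m})$ is its $m$-th power; this is exactly the test polynomial of part (a), whence $\mathfrak{e}(S_{1l}^{\oplus m})\in\langle t_1^{n+1},\ldots,t_k^{n+1}\rangle$ would contradict the hypothesis of (a). For (b), the representation $S_{1l}$ differs from $S_{2l}$ precisely by the $k$ summands with $s=1$, whose Euler classes are $t_1,\ldots,t_k$, so $\mathfrak{e}(S_{2l})=\mathfrak{e}(S_{1l})/(t_1\cdots t_k)$ is an honest polynomial and $\mathfrak{e}(S_{2l}\oplus S_{1l}^{\oplus(m-1)})=\frac{1}{t_1\cdots t_k}\prod_{1\le s_1+\cdots+s_k\le l}(s_1t_1+\cdots+s_kt_k)^m$, which is the test polynomial of part (b); then $\mathfrak{e}(S_{2l}\oplus S_{1l}^{\oplus(m-1)})\in\langle\bar w_{n-k+1},\ldots,\bar w_n\rangle$ would contradict the hypothesis of (b).

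I expect no serious obstacle; the two points that need care are verifying that the Euler class of a nontrivial $\mathbb{Z}_2^k$-representation is a non-zero-divisor in $\mathbb{F}_2[t_1,\ldots,t_k]$, so that the index of its sphere is the full principal ideal on the Euler class rather than merely a subideal containing it, and the combinatorial bookkeeping identifying which one-dimensional summands occur in $S_{1l}$ versus $S_{2l}$, which is precisely what makes the division by $t_1\cdots t_k$ in part (b) exact.
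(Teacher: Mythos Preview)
Your proposal is correct and follows essentially the same approach as the paper: restrict to the diagonal $\mathbb{Z}_2^k\subset W_k$, compute the four Fadell--Husseini indexes exactly as you list them, and invoke monotonicity to derive a contradiction. The paper's proof is terser---it simply records the four index computations and cites the monotonicity property---while you have spelled out the Gysin-sequence justification that the sphere index is the full principal ideal on the Euler class and the bookkeeping that identifies which one-dimensional summands occur in $S_{1l}$ versus $S_{2l}$; these are precisely the details the paper leaves implicit.
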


\begin{proof}
Both statements follow from the Fadell--Husseini index computations:%
\begin{equation*}
\begin{array}{llll}
\mathrm{Index}_{\mathbb{Z}_{2}^{k},\mathbb{F}_{2}}~\left( S^{n}\right) ^{k}
& = & \langle t_{1}^{n+1},\ldots ,t_{k}^{n+1}\rangle  , \\
\mathrm{Index}_{\mathbb{Z}_{2}^{k},\mathbb{F}_{2}}~S_{1l}^{\oplus m} & = &
\langle \prod_{\substack{ s_{1},\ldots ,s_{k}\in \mathbb{Z}_{2} \\ 1\leq
s_{1}+\ldots +s_{k}\leq l}}(s_{1}t_{1}+s_{2}t_{2}+\dots
+s_{k}t_{k})^{m}\rangle  , \\
\mathrm{Index}_{\mathbb{Z}_{2}^{k},\mathbb{F}_{2}}~V_{n}^{k} & = & \langle
\bar{\omega}_{n-k+1},\ldots ,\bar{\omega}_{n}\rangle  , \\
\mathrm{Index}_{\mathbb{Z}_{2}^{k},\mathbb{F}_{2}}S_{2l}\oplus
S_{1l}^{\oplus \left( m-1\right) } & = & \langle \frac{1}{t_{1}\cdots t_{k}}\prod
_{\substack{ s_{1},\ldots ,s_{k}\in \mathbb{Z}_{2} \\ 1\leq s_{1}+\ldots
+s_{k}\leq l}}(s_{1}t_{1}+s_{2}t_{2}+\dots +s_{k}t_{k})^{m}\rangle  ,%
\end{array}%
\end{equation*}%
and its basic property that if there is a $G$-equivariant map $X\rightarrow Y
$ then $\mathrm{Index}_{G,\ast }X\supseteq \mathrm{Index}_{G,\ast }Y$.
\end{proof}

\subsection{Proof of Theorem~\ref{proj-ham-sandwich}}

Let us lift the measures to $S^{n-1}\subseteq \mathbb R^n$; we obtain $m+1$ centrally symmetric measures on the sphere. It is sufficient to find a pair of oriented hyperplanes through the origin $H_1, H_2$ such that for every $i=0,1,\ldots, m$
$$
\mu_i(H_1^+\cap H_2^+) = \mu_i(H_1^+\cap H_2^-) = \mu_i(H_1^-\cap H_2^+) = \mu_i(H_1^-\cap H_2^-).
$$
Since the conditions $\mu_i(H_1^+\cap H_2^+) = \mu_i(H_1^-\cap H_2^-)$ and $\mu_i(H_1^+\cap H_2^-) = \mu_i(H_1^-\cap H_2^+)$ hold always (because of the central symmetry), we may select the components of the test map to be
$$
f_i(H_1, H_2) = \mu_i(H_1^+\cap H_2^+) - \mu_i(H_1^+\cap H_2^-) - \mu_i(H_1^-\cap H_2^+) + \mu_i(H_1^-\cap H_2^-)
$$

The rest of the proof would follow directly from the proof of Theorem~\ref{ram-2frames-proj-emb} (see Section~\ref{ram-2frames-proj-emb-proof}), if we had $m$ measures. We are going to provide an additional argument to partition $m+1$ measures.

Take the measure $\mu_0$ and assume that its support equals $S^{n-1}$. Any measure can be approximated by such a measure, and the standard compactness argument (the configuration space of all pairs $(H_1, H_2)$ is compact) extends the solution to arbitrary measures. We are going to show the following:

\begin{proposition}
If the support of $\mu_0$ is the whole $S^{n-1}$, then the configuration space $X$ of pairs $(H_1, H_2)$ that equipartition $\mu_0$ (i.e. $f_0(H_1, H_2)=0$) is $D_8$-equivariantly homeomorphic to $V_n^2$.
\end{proposition}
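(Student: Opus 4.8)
The plan is to construct an explicit homeomorphism $\Psi\colon V_n^2\to X$ and then read off its equivariance. Recording an oriented hyperplane through the origin by its inward unit normal, $X$ is the set of pairs $(v_1,v_2)\in S^{n-1}\times S^{n-1}$ whose four orthants $H_{v_1}^{\pm}\cap H_{v_2}^{\pm}$ all have $\mu_0$-measure $\tfrac14$; by central symmetry of $\mu_0$ this is equivalent to $f_0(H_{v_1},H_{v_2})=0$, and every such pair has $v_1\ne\pm v_2$ (otherwise two of the orthants are empty). I would take as inverse candidate the map $(v_1,v_2)\mapsto\bigl(\tfrac{v_1+v_2}{\|v_1+v_2\|},\tfrac{v_1-v_2}{\|v_1-v_2\|}\bigr)$, which is well defined on $X$ and lands in $V_n^2$ because $\langle v_1+v_2,\,v_1-v_2\rangle=\|v_1\|^2-\|v_2\|^2=0$; equivalently $\Psi$ should have the form
\[
\Psi(e_1,e_2)=\bigl(\sin\beta\,e_1+\cos\beta\,e_2,\ \sin\beta\,e_1-\cos\beta\,e_2\bigr)
\]
for an angle $\beta=\beta_0(e_1,e_2)\in(0,\tfrac\pi2)$ still to be pinned down by the measure.

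The core step is to fix $(e_1,e_2)\in V_n^2$ and study the one-parameter family $v_1(\beta)=\sin\beta\,e_1+\cos\beta\,e_2$, $v_2(\beta)=\sin\beta\,e_1-\cos\beta\,e_2$, $\beta\in(0,\tfrac\pi2)$. A direct computation identifies $H_{v_1(\beta)}^+\cap H_{v_2(\beta)}^+$ with the open cone $\{x:\langle x,e_1\rangle>0,\ |\langle x,e_2\rangle|<\tan\beta\cdot\langle x,e_1\rangle\}$, whence, using $\mu_0(H_{v_1}^+)=\tfrac12$, the quantity $g(\beta):=\tfrac12 f_0\bigl(H_{v_1(\beta)},H_{v_2(\beta)}\bigr)$ equals $2\mu_0\bigl(H_{v_1(\beta)}^+\cap H_{v_2(\beta)}^+\bigr)-\tfrac12$. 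Since this cone grows strictly as $\beta$ increases and $\mu_0$ has full support, $g$ is strictly increasing on $(0,\tfrac\pi2)$, with $g(0^+)=-\tfrac12$ and $g((\tfrac\pi2)^-)=+\tfrac12$; hence it has a unique root $\beta_0(e_1,e_2)$, depending continuously on $(e_1,e_2)$ because $g$ is jointly continuous and strictly monotone in $\beta$. Setting $\Psi(e_1,e_2)=(v_1(\beta_0),v_2(\beta_0))$ then gives a continuous map into $X$ (the equation $g(\beta_0)=0$ together with central symmetry forces all four orthants to have measure $\tfrac14$), and the map of the first paragraph is a continuous two-sided inverse, the uniqueness of $\beta_0$ being what gives $\Psi^{-1}\circ\Psi=\mathrm{id}$; thus $\Psi$ is a homeomorphism. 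This strict-monotonicity step is the main obstacle, and it is exactly the place where the hypothesis $\mathrm{supp}\,\mu_0=S^{n-1}$ is essential, since it guarantees that the open cone by which the orthant grows when $\beta$ increases has positive $\mu_0$-measure.

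It remains to check $D_8=W_2$-equivariance, which I would do by evaluating $\Psi^{-1}$ on the three generators of the natural action on $X$ (the flips of the two orientations and the transposition). One finds $\Psi^{-1}(v_2,v_1)=(e_1,-e_2)$, $\Psi^{-1}(-v_1,v_2)=(-e_2,-e_1)$ and $\Psi^{-1}(v_1,-v_2)=(e_2,e_1)$, so the generators $\sigma,\varepsilon_1,\varepsilon_2$ on $X$ correspond under $\Psi$ to $\varepsilon_2,\ \varepsilon_1\varepsilon_2\sigma,\ \sigma$ on $V_n^2$; these assignments define an automorphism of $D_8$ which carries $\langle\varepsilon_1,\varepsilon_2\rangle$ onto the subgroup acting trivially on $R_2$. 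This is precisely the kind of identification used in the proof of Theorem~\ref{ram-2frames-proj-emb} in Section~\ref{ram-2frames-proj-emb-proof}: transporting the test map $X\to R_2^{\oplus m}$ across $\Psi$ yields a $D_8$-map $V_n^2\to R_2^{\oplus m}$ for which $\langle\varepsilon_1,\varepsilon_2\rangle$ acts trivially on the target, hence, passing to $V_n^2/\langle\varepsilon_1,\varepsilon_2\rangle\simeq(\mathbb RP^{n-1}\times\mathbb RP^{n-1})\setminus\Delta(\mathbb RP^{n-1})$, a $\mathbb Z_2$-map to $S(\gamma^m)$ — the deleted square obstruction. So the extra measure $\mu_0$ costs exactly one dimension of configuration space and leaves us with the situation already handled by Theorem~\ref{ram-2frames-proj-emb}.
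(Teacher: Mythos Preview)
Your proof is correct and follows essentially the same idea as the paper's: for each orthonormal pair $(e_1,e_2)$ one parametrizes the one-parameter family of hyperplane pairs whose normals lie in $\mathrm{span}(e_1,e_2)$ and are symmetric with respect to the coordinate axes, and uses full support of $\mu_0$ to get a unique zero of $f_0$ by monotonicity. Your version is considerably more explicit than the paper's terse argument --- you supply the angle parameter $\beta$, the closed-form inverse $(v_1,v_2)\mapsto\bigl(\tfrac{v_1+v_2}{\|v_1+v_2\|},\tfrac{v_1-v_2}{\|v_1-v_2\|}\bigr)$, and the concrete $D_8$-automorphism, whereas the paper only writes ``if the action of $D_8$ on $V_n^2$ is chosen properly'' --- but the underlying construction is the same.
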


\begin{proof}
Take an orthogonal $2$-frame $(e_1, e_2)$. Denote the orthogonal complement of $(e_1, e_2)$ by $L^\perp(e_1, e_2)$, and denote the reflections 
$$
\sigma_1 : x\mapsto x - 2 (x,e_1)e_1,\quad \sigma_2 : x\mapsto x - 2 (x,e_2)e_2
$$
Note that the hyperplane $H_1$ is uniquely defined by the following conditions:

\begin{compactitem}
\item $H_1\supseteq L^\perp(e_1, e_2)$,
\item $e_1,e_2\in H_1^+$,
\item $H_2 = \sigma_1(H_1) = -\sigma_2(H_1)$,
\item $f_0(H_1, H_2) = 0$.
\end{compactitem}

The dependence of $H_1$ on $(e_1,e_2)\in V_n^2$ is continuous, and therefore we obtain a homeomorphism between $X$ and $V_n^2$, if the action of $D_8$ on $V_n^2$ is chosen properly.
\end{proof}

Now we continue the proof of Theorem~\ref{proj-ham-sandwich}. The functions $f_1,\ldots, f_m$ may be considered as functions on $V_n^2$. If we consider the group $\mathbb Z_2\times \mathbb Z_2\subset D_8$, generated by $\sigma_1, \sigma_2$, then the functions $f_i$ are invariant under this group action. Therefore they define the $\mathbb Z_2=D_8/(\mathbb Z_2\times \mathbb Z_2)$-equivariant map
$$
\tilde f : V_n^2/(\mathbb Z_2\times \mathbb Z_2)\simeq \left(\mathbb RP^{n-1}\times \mathbb RP^{n-1}\right)\setminus \Delta(\mathbb RP^{n-1}) \to \mathbb R^m,
$$
where the action on $\left(\mathbb RP^{n-1}\times \mathbb RP^{n-1}\right)\setminus \Delta(\mathbb RP^{n-1})$ is given by interchanging factors in the product while the action on $\mathbb R^m$ is antipodal. 
This map must have a zero, because the ``deleted square obstruction'' guarantees the existence of a zero.

\end{document}